\newcommand{\ls}{\leq}
\newcommand{\gr}{\geq}
\renewcommand{\geq}{\geqslant}
\renewcommand{\leq}{\leqslant}
\newcommand{\dd}{\mathrm{d}}
\newcommand{\E}{\mathbb{E}}
\newcommand{\R}{\mathbb{R}}
\newcommand{\C}{\mathbb{C}} 
\newcommand{\D}{\mathbb{D}}
\newcommand{\scal}[2]{\left\langle #1, #2 \right\rangle}
\newcommand{\red}{}
\newcommand{\purp}{\color{purple}}
\newcommand{\jj}{\mathfrak{j}}
\DeclareMathOperator{\vol}{vol}
\def\thm@space@setup{%
  \thm@preskip=12pt plus 0pt minus 0pt
  \thm@postskip=0pt plus 0pt minus 0pt
}
\xpatchcmd{\proof}{6\p@\@plus6\p@\relax}{\z@skip}{}{}
\newtheorem{theorem}{Theorem}
\newtheorem{lemma}[theorem]{Lemma}
\newtheorem{corollary}[theorem]{Corollary}
\newtheorem{proposition}[theorem]{Proposition}
\theoremstyle{remark}
\newtheorem{remark}[theorem]{Remark}
\theoremstyle{definition}
\title{Haagerup's phase transition at polydisc slicing}
\author{Giorgos Chasapis}
\author{Salil Singh}
\author{Tomasz Tkocz}
\address{Giorgos Chasapis \\ University of Crete, Voutes Campus 70013, Heraklion, Crete, Greece.}
\email{gchasapis@uoc.gr}
\address{Salil Singh and Tomasz Tkocz\\ Carnegie Mellon University; Pittsburgh, PA 15213, USA.}
\email{\{salils,ttkocz\}@andrew.cmu.edu}
\thanks{GC supported by the Hellenic Foundation for Research and Innovation, Project HFRI-FM17-1733 and by University of Crete Grant 4725. TT's research supported in part by NSF grant DMS-1955175.}
\date{\today}
\begin{document}

\begin{abstract} 
We establish a sharp comparison inequality between the negative moments and the second moment of the magnitude of sums of independent random vectors uniform on three-dimensional Euclidean spheres. This provides a probabilistic extension of the Oleszkiewicz-Pe\l czy\'nski polydisc slicing result. The Haagerup-type phase transition occurs exactly when the $p$-norm recovers volume, in contrast to the real case. We also obtain partial results in higher dimensions.
\end{abstract}

\maketitle

\bigskip

\begin{footnotesize}
\noindent {\em 2010 Mathematics Subject Classification.} Primary 60E15; Secondary 52A20, 33C10.

\noindent {\em Key words. polydisc slicing, Bessel function, negative moments, Khinchin inequality, sharp moment comparison, sums of independent random vectors, uniform spherically symmetric random vectors.} 
\end{footnotesize}

\bigskip

\section{Introduction}

Khinchin-type inequalities concern estimates on $L_p$ norms of (weighted) sums of independent random variables, typically involving a norm which is easily understood (or explicit in given parameters) such as the $L_2$ norm. They can be traced back to Khinchin's work \cite{Kh} on the law of the iterated logarithm, where he established such bounds for Rademacher random variables (random signs). Beyond their original use, most notably, such inequalities have played an important role in Banach space theory (in connection with topics such as unconditional convergence or type and cotype), see \cite{Die, Ver2, LT, Woj}. Considerable work has been devoted to the pursuit of sharp constants in Khinichin-type inequalities, see for instance \cite{AH, BN, ENT1, ENT2, Haa, HNT, KLO, LO-best, LO, Mo, NO, NP, New, koles, koles-b, PS, Sz, Whi, Y}, in particular for sums of random vectors uniform on Euclidean spheres \cite{BC, CGT, CKT, Ko, KK} (as a natural generalisation of Rademacher and Steinhaus random variables, intimately related to uniform convergence in real and complex Banach spaces, respectively). This paper continues that line of research. 

Throughout, $|\cdot|$ denotes the standard Euclidean norm on $\R^d$, inherited from the standard inner product $\scal{\cdot}{\cdot}$. For a random vector $X$ in $\R^d$ and a real parameter $p$, we write $\|X\|_p = (\E|X|^p)^{1/p}$ for the $L_p$-norm ($p$-th moment) of the magnitude of $X$ (whenever the expectation exists, with $p=0$ understood as usual as $\|X\|_0 = e^{\E \log|X|}$, arising from taking the limit as $p \to 0$).

Let $\xi_1, \xi_2, \dots$ be independent random vectors, each uniform on the unit Euclidean sphere $S^{d-1}$ in $\R^d$. In particular, when $d=1$, these are Rademacher random variables, that is symmetric random signs in $\R$, whereas when $d=2$, they are often referred to as Steinhaus random variables (especially when $\R^2$ is treated as $\C$). For $q > -(d-1)$, let $c_{d}(q)$ be the best positive constant such that the following Khinchin-type inequality holds: for every $n \geq 1$ and real scalars $a_1, \dots, a_n$, we have
\begin{equation}\label{eq:khin}
\left\|\sum_{k=1}^n a_k\xi_k\right\|_q \geq c_d(q)\left\|\sum_{k=1}^n a_k\xi_k\right\|_2.
\end{equation}
In other words, thanks to homogeneity, $c(q)$ is the infimal value of $\left\|\sum_{k=1}^n a_k\xi_k\right\|_q$ over all $n \geq 1$ and $a_1, \dots, a_n \in \R$ with $\sum a_k^2 = 1$. {\red We stress that when $d\gr 1$ and $q > -(d-1)$, this $L_q$ norm exists  regardless of the coefficients, e.g. seen by noting that then $\E|\xi_1+x|^q = \E(|x|^2+2\scal{x}{\xi_1}+1)^{q/2}$ is finite for every $x \in \R^d$, using that $\scal{x}{\xi_1}$ has density proportional to $(1-(u/|x|)^2)^{\frac{d-3}{2}}$ on $-|x| \leq u \leq |x|$ (of course, for a given sequence of coefficients $a_j$, the range of $q$ may be larger, for instance when $d=1$, it is all $q \in \R$ as long as $\sum_{j=1}^n \pm a_j$ never vanishes)}.

Plainly, $c_{d}(q) = 1$ for $q \geq 2$ (by the monotonicity of $p \mapsto \|\cdot\|_p$). When $q \geq 2$, the reverse inequality to \eqref{eq:khin} is nontrivial and interesting, but we do not discuss it here at all, referring instead to, for instance \cite{BC, HT, NO} for a comprehensive account of known as well as recent results.

From now on we consider $-(d-1)< q < 2$. 
We define two  constants arising from two particular choices of weights in \eqref{eq:khin}: $a_1 = a_2 = \frac{1}{\sqrt{2}}$ with $n=2$ and $a_1 = \dots = a_n = \frac{1}{\sqrt{n}}$ with $n \to \infty$, 
\begin{align}\label{eq:const-2}
c_{d,2}(q) &= \left\|\frac{\xi_1+\xi_2}{\sqrt{2}}\right\|_q = \frac{1}{\sqrt{2}}\left(\frac{\Gamma\left(\frac{d}{2}\right)\Gamma(d+q-1)}{\Gamma\left(\frac{d+q}{2}\right)\Gamma\left(d+\frac{q}{2}-1\right)}\right)^{1/q}, \\\label{eq:const-inf}
c_{d,\infty}(q) &= \lim_{n\to\infty} \left\|\frac{\xi_1+\dots+\xi_n}{\sqrt{n}}\right\|_q = \left\|\frac{Z}{\sqrt{d}}\right\|_q = \sqrt{\frac{2}{d}}\left(\frac{\Gamma\left(\frac{d+q}{2}\right)}{\Gamma\left(\frac{d}{2}\right)}\right)^{1/q},
\end{align}
where $Z$ is a standard Gaussian random vector in $\R^d$ (emerging by the central limit theorem). The expression for $c_{d,2}(q)$ will be justified later (see Corollary \ref{cor:const-2}), whereas the expression for $c_{d,\infty}(q)$ follows by a simple integration in polar coordinates. Note that 
\begin{equation}\label{eq:c-low}
c_d(q) {\red \leq} \min\{c_{d,2}(q), c_{d,\infty}(q)\}.
\end{equation}
It can be checked that in fact
\begin{equation}\label{eq:phase.transition}
\min\{c_{d,2}(q), c_{d,\infty}(q)\} = \begin{cases} c_{d,2}(q), & -(d-1) < q \leq q_d^*, \\ c_{d,\infty}(q), & q_d^* \leq q \leq 2,  \end{cases}
\end{equation}
where $q_d^*$ is the unique solution of the equation $c_{d,2}(q) = c_{d,\infty}(q)$ in $(-(d-1),2)$. We have included a sketch of the proof of this fact in the \hyperref[app:pht]{appendix}. In Table \ref{tab:q*} below we list some numerical values of $q_d^*$. We are grateful to Hermann K\"onig for sharing his notes on these topics, \cite{Ko-priv}.

\subsection{Known results}
The pursuit of the value of $c_d(q)$ has a rich history which can be summarised in one simple statement that in all known cases {\red the trivial bound \eqref{eq:c-low} is tight}. Of course, the history begins with the one dimensional case of Rademacher random variables. In his study \cite{Lit} on bilinear forms, Littlewood conjectured that $c_1(1) = c_{1,2}(1) = \frac{1}{\sqrt{2}}$, which was confirmed by Szarek in \cite{Sz} (see also \cite{LO-best} and \cite{Tom}). Haagerup's pivotal work \cite{Haa} addressed the entire range $0 < q < 2$, showing the following phase transition in the behaviour of $c_1(q)$:
\[
c_1(q) = \begin{cases} c_{1,2}(q), & 0 < q \leq q_1^*, \\ c_{1,\infty}(q), & q_1^* \leq q < 2,  \end{cases}
\]
where $q_1^* = 1.84..$ is the unique solution of the equation $c_{1,2}(q) = c_{1,\infty}(q)$ in $(0,2)$; in particular, when $d=1$, we have equality in \eqref{eq:c-low}. We also refer to Nazarov and Podkorytov's paper \cite{NP} which offered great simplifications. Haagerup devised a very efficient argument, crucially relying on Fourier-analytic formulae for $L_p$-norms, which together with \cite{NP} paved the path for many further results. 

That a similar behaviour occurs in the case $d=2$ (Steinhaus variables) was conjectured by Haagerup, later confirmed by K\"onig in \cite{Ko}: when $d=2$, $0 \leq q < 2$, we have equality in \eqref{eq:c-low} and the phase transition occurs now at $q_2^* = 0.47..$. The range $1 \leq q < 2$ was in fact earlier dealt with by K\"onig and Kwapie\'n in \cite{KK} (with $q=1$ handled even earlier by Sawa in \cite{Saw}), whereas $-1  <q < 0$  (to the best of our knowledge) appears to be left open, with a natural conjecture that $c(q) = c_{2,2}(q)$.

For the case $d=3$: Lata\l a and Oleszkiewicz showed in \cite{LO} that $c_{3}(q) = c_{3,\infty}(q)$ for $1 \leq q < 2$ which was extended to $0 < q < 1$ in our joint work \cite{CGT} with Gurushankar (see Proposition \ref{prop:ball-sphere} below for a connection to uniform distribution on intervals). The phase transition occurs in the range $-1 < q < 0$ at $q_3^* = -0.79..$, as established in our joint work \cite{CKT} with K\"onig, so when $d=3$ and  $-1 < q  < 2$, \eqref{eq:c-low} holds with equality. Again, $-2 < q < -1$ appears to be open with a natural conjecture that $c(q) = c_{3,2}(q)$. 

In higher dimensions $d \geq 4$, there are precise Schur-convexity results available for positive moments due to Baerstein II and Culverhouse from \cite{BC} and, independently K\"onig and Kwapie\'n from \cite{KK}: when $0 \leq q < 2$, it follows in particular that $c_{d}(q) = c_{d,\infty}(q)$. However, nothing seems to be known about the value of $c_{d}(q)$ for negative $q$, except it being (nontrivially) finite, as shown by Gorin and Favorov in \cite{GF} (in a much more general setting).  This paper partially fills out this gap.

\subsection{Our contribution}
Our first result concerns the best constant $c_d(q)$ in the inequality \eqref{eq:khin} when $q>-(d-4)$. It turns out that this is a consequence of a Schur-concavity type statement that follows directly from the main result of \cite{BC} (see Theorem \ref{thm:high-dim-Schur} below).

\begin{theorem}\label{thm:high-dim}
For every $d\gr 5$ and $-(d-4)\ls q<0$, we have $c_d(q)=c_{d,\infty}(q)$.
\end{theorem}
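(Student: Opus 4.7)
The plan is to invoke Theorem \ref{thm:high-dim-Schur} (derived from \cite{BC}) and then perform the standard majorization-plus-CLT reduction. In the range relevant to this corollary I expect the cited theorem to state: for $d \geq 5$ and $-(d-4) \leq q < 2$, the map $(a_1^2, \ldots, a_n^2) \mapsto \E|\sum_{k=1}^n a_k \xi_k|^q$ is Schur-concave on $\{(a_1^2,\dots,a_n^2) \in \R_+^n : \sum_k a_k^2 = 1\}$.

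Granting this, the equal-weight vector $(1/n,\ldots,1/n)$ is the minimum of that simplex in the majorization order, so Schur-concavity yields
$$\E\left|\sum_{k=1}^n a_k \xi_k\right|^q \leq \E\left|\frac{1}{\sqrt{n}}\sum_{k=1}^n \xi_k\right|^q$$
for any admissible sequence with $\sum a_k^2 = 1$. Padding with zeros further shows that the right-hand side is nondecreasing in $n$, so letting $n \to \infty$ and using \eqref{eq:const-inf} (which packages the multivariate CLT together with the moment convergence available in the admissible range $q > -(d-1)$) gives
$$\E\left|\sum_k a_k \xi_k\right|^q \leq c_{d,\infty}(q)^q.$$
Since $q < 0$, the map $t \mapsto t^{1/q}$ reverses inequalities on $(0,\infty)$, so $\|\sum_k a_k \xi_k\|_q \geq c_{d,\infty}(q)$ uniformly over admissible sequences, i.e.\ $c_d(q) \geq c_{d,\infty}(q)$. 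Combined with the trivial upper bound \eqref{eq:c-low}, this yields equality.

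Thus the entire substantive content of Theorem \ref{thm:high-dim} is packaged into Theorem \ref{thm:high-dim-Schur}, and the main obstacle is to extend the Baerstein--Culverhouse Schur-concavity from the classical positive regime $q \in [0,2)$ into the negative range $q \in [-(d-4),0)$. I would expect this extension to proceed via an integral representation of $x \mapsto |x|^q$ valid in precisely this range (Riesz-type identities whose integrability is captured by the threshold $q > -(d-4)$, presumably reflecting the codimension-$4$ structure at play for $d \geq 5$), expressing $|x|^q$ as a positive superposition of functions already covered by the BC framework in dimension $d$; Schur-concavity then transfers through integration, and the inequality in the corollary follows as above.
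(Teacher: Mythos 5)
Your proposal is correct and follows essentially the paper's own route: Theorem \ref{thm:high-dim} is deduced from the Schur-concavity of Theorem \ref{thm:high-dim-Schur} exactly as you describe (majorization by the equal-weight vector, padding with zeros, the Gaussian limit \eqref{eq:const-inf}, reversal of the inequality under $t \mapsto t^{1/q}$ for $q<0$, and the trivial bound \eqref{eq:c-low}). Only your closing speculation departs from the paper, which proves Theorem \ref{thm:high-dim-Schur} not via a Riesz-type integral representation of $|x|^q$ but by directly verifying that the regularisation $(|x|^2+\delta)^{-p/2}$ is bisubharmonic (an explicit bilaplacian computation, then monotone convergence as $\delta \to 0$); note also that for $0<q<2$ the moment functional is Schur-convex rather than Schur-concave, though neither point affects your argument in the range $-(d-4)\leq q<0$.
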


{\red Note that the restriction $-(d-4)\ls q<0$ already makes the statement of Theorem~\ref{thm:high-dim} meaningful only for dimensions $d \geq 5$}. Our second result covers the entire range $-3 < q < 0$ for dimension $d = 4$, which exhibits Haagerup's phase transition at exactly $q_4^* = -2$ (see also Table \ref{tab:q*} for other values of $q_d^*$ and a summary of known results and open questions).

\begin{theorem}\label{thm:d=4}
For $-3 < q < 0$, we have
\[
c_{4}(q) = \begin{cases} c_{4,2}(q), & -3 < q \leq -2, \\ c_{4,\infty}(q), & -2 \leq q < 0. \end{cases}
\]
\end{theorem}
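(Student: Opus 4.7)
The plan is to follow the Haagerup--Nazarov--Podkorytov Fourier-analytic framework, as developed in the companion papers \cite{CGT, CKT} for $d=3$. The starting point is the integral identity: for any unit vector $\mathbf{a} \in S^{n-1}$ and $-4 < q < 0$,
\[
\E\left|\sum_{k=1}^n a_k \xi_k\right|^q = \kappa_q \int_0^\infty \prod_{k=1}^n \Omega_4(a_k r)\, r^{-q-1}\, dr,
\]
where $\kappa_q > 0$ is an explicit constant and $\Omega_4(r) = 2J_1(r)/r$ is the characteristic function of the uniform distribution on $S^3$. Since $q < 0$, the required lower bound $\|S\|_q \geq c$ is equivalent to the upper bound $\E|S|^q \leq c^q$. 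The two candidate extremal configurations correspond to the integrands $F_2(r) = \Omega_4(r/\sqrt 2)^2$ (two-point profile) and $F_\infty(r) = e^{-r^2/8}$ (Gaussian envelope, matching the low-frequency expansion $\Omega_4(r) = 1 - r^2/8 + O(r^4)$). In both regimes the goal reduces to a single weighted integral comparison.

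For the regime $-2 \leq q < 0$ I would show the integral against $r^{-q-1}$ is dominated by its Gaussian value $\int_0^\infty e^{-r^2/8}r^{-q-1}\,dr$. Near $r=0$ one has $\prod_k \Omega_4(a_k r) - e^{-r^2/8} = O(r^4)$ with a definite (negative) leading sign coming from the second-order term in the Taylor expansion of $\log\Omega_4$; this sets up the Nazarov--Podkorytov sign-change template. Combined with the Bessel estimate $|J_1(s)| \lesssim s^{-1/2}$ controlling the oscillatory tail and a careful tracking of the sign changes of $\prod_k \Omega_4(a_k r) - e^{-r^2/8}$, the weighted inequality follows, with the weight $r^{-q-1}$ at $q \geq -2$ giving just enough mass on the negative part to absorb the positive tail.

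For $-3 < q \leq -2$ the comparison reverses and one compares instead against the two-point profile $F_2$. The strategy is to reduce to a finite-parameter optimization: first, by a symmetrization/majorization move on pairs $(a_i^2, a_j^2)$ in the weighted integral, show that extremizers concentrate on at most two nonzero coordinates; then, by a one-parameter analysis in the remaining free weight, identify $a_1 = a_2 = 1/\sqrt 2$ as the maximizer in this regime. Matching constants at the transition is automatic: from \eqref{eq:const-2}--\eqref{eq:const-inf} a direct computation gives $c_{4,2}(-2) = c_{4,\infty}(-2) = 1/\sqrt 2$, so the two regimes agree continuously at $q_4^* = -2$.

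The main obstacle will be executing the Nazarov--Podkorytov sign-change analysis in the two-point regime, where the slowly decaying weight $r^{-q-1}$ amplifies the contribution of the oscillatory Bessel tail of $\prod_k \Omega_4(a_k r) - F_2(r)$ and rules out crude pointwise estimates. The number and approximate location of the sign changes of this difference must be pinned down and matched precisely against the rate $r^{-q-1}$ on each side of $q = -2$; this is the technical heart where I expect most of the work to lie, and where the $d=4$-specific structure of $\Omega_4 = 2J_1/r$ (as opposed to the elementary $\sin(r)/r$ available in \cite{CKT} for $d=3$) makes the estimates meaningfully more delicate.
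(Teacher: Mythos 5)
There is a genuine gap, in fact two, both at the point where you would have to deal with arbitrary coefficient vectors. In the Gaussian regime $-2\leq q<0$ you propose to run the Nazarov--Podkorytov sign-change analysis directly on the $n$-variable difference $\prod_k\jj_1(a_kt)-e^{-t^2/8}$. That difference depends on the whole vector $(a_k)$, and the number and location of its sign changes cannot be pinned down uniformly in $n$ and the coefficients; this is precisely why Haagerup-type arguments first decouple the product by H\"older's inequality, as in \eqref{eq:sum-Holder}, reducing matters to the one-parameter family $F(p,s)=\int_0^\infty|\jj_1(t)|^st^{p-1}\,\dd t$ and the single integral inequality \eqref{eq:perfect}. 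More importantly, even after this reduction the inequality \eqref{eq:perfect} genuinely fails for certain $(p,s)$ with $s$ close to $1$, i.e.\ exactly when one coefficient is large, so no comparison of the integrand against the Gaussian envelope --- pointwise or via sign-change bookkeeping --- can close the argument on its own. The paper handles the large-coefficient case by a separate induction on $n$ (Theorem \ref{thm:ind}), which requires strengthening \eqref{eq:OPneg} by replacing $\phi_p$ with the reflected function $\Phi_p$, a two-coefficient hypergeometric base case (Corollary \ref{cor:two-coeff-R4} together with Lemma \ref{lm:ind-base}), and, for $0<p\leq\frac14$, an extra projection step (Corollary \ref{cor:two-coeff-min} with Lemma \ref{lm:Gamma-small-coeff}). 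Your proposal has no mechanism at all for the regime $\max_k|a_k|>\frac{1}{\sqrt2}\bigl(\sum_k a_k^2\bigr)^{1/2}$. Incidentally, the paper proves its integral inequalities (Lemmas \ref{lm:H} and \ref{lm:Ht}) without the NP sign-change lemma, using instead the explicit bounds \eqref{eq:j1-exp} and \eqref{eq:j1-w2}, Riemann-sum numerics and interpolation in $s$; what it borrows from Nazarov--Podkorytov is only the induction-on-$n$ device, which is the ingredient your plan omits.

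In the two-point regime $-3<q\leq-2$ the gap is sharper: your proposed pairwise symmetrization/majorization step, meant to show that extremizers have at most two nonzero coordinates, is exactly the Schur-type monotonicity that fails for $d=4$ in this range --- the paper points out that the bisubharmonicity argument of \cite{BC} is unavailable here, and the phase transition itself (Gaussian extremal just above $q=-2$, two-point extremal below) shows that a pairwise merging move cannot have a consistent direction independent of the configuration. No reduction to two nonzero coordinates is known; the paper instead proves $s^{p/2}F(p,s)\leq 2^{p/2}F(p,2)$ for all $s\geq 2$ (Lemma \ref{lm:Ht}, again via explicit Bessel bounds and interpolation) and covers $\max_k|a_k|>\frac{1}{\sqrt2}\bigl(\sum_k a_k^2\bigr)^{1/2}$ by the same induction scheme, whose $n=2$ base now follows from the monotonicity in $t$ of the hypergeometric expansion in Lemma \ref{lm:two-coeff-expansion}. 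Your observation that $c_{4,2}(-2)=c_{4,\infty}(-2)$ is correct but only confirms continuity at the transition; it does not substitute for either of the two missing arguments.
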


\begin{table}[!hb]
\begin{center}
\caption{Numerical values of $q_d^*$ (see \eqref{eq:q*-d-large} for its asymptotics), known results and open questions about the best constant in Khinchin inequality \eqref{eq:khin}.}
\label{tab:q*}
\hspace*{-4em}\begin{tabular}{r|l|l|p{1.5cm}|c}
$d$ & $q_d^*$  & Range where $c(q)$ known &  Phase  transition & Left open \\\hline
$1$ & $1.82..$ & $0 < q < 2$ (\cite{Haa}) & \cite{Haa} & $-$ \\
$2$ & $0.47..$ & $0 < q < 2$ (\cite{BC, Ko, KK}) & \cite{Ko} & $-1 < q < 0$ \\
$3$ & $-0.79..$ & $-1 < q < 2$ (\cite{CGT, CKT, LO}) & \cite{CKT} & $-2 < q < -1$ \\
$4$ & $-2$ & $-3 < q < 2$ (Thm. \ref{thm:d=4}) & Thm. \ref{thm:d=4} & $-$ \\\hline
$5$ & $-3.16..$ & $-1 < q < 2$ (\cite{BC, KK}, Thm. \ref{thm:high-dim}) & \hspace*{15pt} ? & $-4 < q < -1$ \\
$\vdots$ & & & & \\
$d$ & $-(d-1)+o(1)$ & $-(d-4) < q < 2$ (\cite{BC, KK}, Thm. \ref{thm:high-dim})  & \hspace*{15pt}  ? & $-(d-1) < q < -(d-4)$
\end{tabular}
\end{center}
\end{table}

\subsection{Relation to volume}
It can perhaps be traced back to Kalton and Koldobsky's paper \cite{KalKol} that the volume of hyperplane sections of convex bodies can be expressed in terms of negative moments (of linear forms in vectors uniform on the body). Brzezinski's work \cite{Brz} makes the same connection for sections of products of Euclidean balls by block subspaces and our recent work with Nayar \cite{CNT} explores this further. In particular, as \cite{CKT} extends Ball's cube slicing result from \cite{Ball} (in the form of sharp Khinchin inequality \eqref{eq:khin} when $d=3$), Theorem \ref{thm:d=4} can be viewed as a probabilistic extension of Oleszkiewicz and Pe\l czy\'nski's polydisc slicing from \cite{OP}.  In fact, this connection was the main motivation of this work. It is very intriguing that the phase transition occurs exactly at $q=-2$ which is when \eqref{eq:khin} recovers the result for volume from \cite{OP}.

More specifically, let $\D = \{z \in \C, \ |z| < 1\}$ be the unit disc in the complex plane. Oleszkiewicz and Pe\l czy\'nski in \cite{OP} proved the following sharp inequality about extremal-volume (complex) hyperplane sections of the polydics $\D^n$ in $\C^n$: for every (complex) codimension $1$ subspace $H$ in $\C^n$, we have
\begin{align}\label{eq:OP-up}
\vol_{2n-2}(\D^n \cap H) &\leq \vol_{2n-2}(\D^n \cap (1,1,0,\dots,0)^\perp),\\\label{eq:OP-low}
\vol_{2n-2}(\D^n \cap H) &\geq \vol_{2n-2}(\D^n \cap (1,0,\dots,0)^\perp).
\end{align}
Here $a^\perp = \{z \in \C^n, \scal{a}{z} = 0\}$ is the (codimension $1$) hyperplane orthogonal to a vector $a$ in $\C^n$ and $\scal{\cdot}{\cdot}$ is the standard inner product in $\C^n$. 
If we let $U_1, \dots, U_n$ be independent random vectors, each uniform on $\D$ and let $a = (a_1, \dots, a_n)$ be a unit vector in $\C^n$, then
\[
\vol_{2n-2}(\D^n \cap a^\perp) =  \frac{\pi^{n-1}}{2}\lim_{p \to 2-} (2-p)\E\left|\sum_{k=1}^n a_kU_k\right|^{-p}
\]
(such formulae hold for arbitrary origin-symmetric convex sets, and this one follows immediately from Corollary 11 in \cite{CNT}). 
Moreover, the moments of sums of vectors uniform on balls are proportional to sums of vectors uniform on spheres (in a slightly higher dimension).

\begin{proposition}[\cite{BC}, \cite{KK}]\label{prop:ball-sphere}
Let $d \geq 3$ and let $\xi_1, \xi_2, \dots$ be independent random vectors uniform on the unit Euclidean sphere $S^{d-1}$ in $\R^d$ and let $U_1, U_2, \dots$ be independent random vectors uniform on the unit Euclidean ball $\mathbb{B}^{d-2}$ in $\R^{d-2}$. For every $q > -(d-2)$, $n \geq 1$ and scalars $a_1, \dots, a_n$, we have
\[
\E\left|\sum_{k=1}^n a_kU_k\right|^q = \frac{d-2}{d-2+q}\E\left|\sum_{k=1}^n a_k\xi_k\right|^q
\]
\end{proposition}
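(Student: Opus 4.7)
The plan is to exploit the rotational symmetry of $S = \sum_{k=1}^n a_k \xi_k$ in $\R^d$. Since each $\xi_k$ is uniform on $S^{d-1}$, its law is invariant under every orthogonal transformation of $\R^d$, and therefore so is the law of $S$. Consequently $S$ admits the skew-product representation $S = R\,\eta$, where $R = |S|$ and $\eta$ is uniform on $S^{d-1}$, independent of $R$.

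Let $P \colon \R^d \to \R^{d-2}$ denote the projection onto the first $d-2$ coordinates. Two facts are central. First, a standard surface-measure computation (using the parametrization of $\eta \in S^{d-1}$ as $(y, \sqrt{1-|y|^2}\,(\cos\phi,\sin\phi))$ with $y \in \mathbb{B}^{d-2}$ and $\phi \in [0,2\pi)$, whose Jacobian factor works out to $1$) shows that $P\eta$ is in fact \emph{uniform} on $\mathbb{B}^{d-2}$. Equivalently, the general formula that the first $k$ coordinates of a point uniform on $S^{d-1}$ have density proportional to $(1-|y|^2)^{(d-k-2)/2}$ on $\mathbb{B}^k$ reduces to the constant density precisely when $k = d-2$. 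Applying this to each $\xi_k$ gives $P\xi_k \stackrel{d}{=} U_k$, independently across $k$, so $PS \stackrel{d}{=} \sum_{k=1}^n a_k U_k$.

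Combining these observations, independence of $R$ and $\eta$ yields
\[
\E\left|\sum_{k=1}^n a_k U_k\right|^q = \E|PS|^q = \E R^q \cdot \E|P\eta|^q.
\]
The second factor is computed directly: as a uniform vector on $\mathbb{B}^{d-2}$, $|P\eta|$ has density $(d-2)r^{d-3}$ on $[0,1]$, so
\[
\E|P\eta|^q = (d-2)\int_0^1 r^{q+d-3}\,dr = \frac{d-2}{d-2+q},
\]
the integral being finite exactly under the assumption $q > -(d-2)$. Since $\E R^q = \E|S|^q = \E\bigl|\sum_{k=1}^n a_k\xi_k\bigr|^q$, the claimed identity drops out. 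The only step requiring any input beyond routine manipulation is the uniform-marginal-on-the-ball fact, which I expect to be the main (and mild) obstacle — but it is classical and reduces to a one-line Jacobian calculation in the chosen parametrization.
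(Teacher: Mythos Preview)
Your proof is correct and follows exactly the route the paper indicates: the paper does not give a detailed argument but simply remarks that the identity ``essentially follows from the folklore result that if a random vector $\xi$ is uniform on $S^{d-1}$, then its projection onto $\R^{d-2}$ is uniform on $\mathbb{B}^{d-2}$.'' Your write-up is a clean fleshing-out of this sketch, supplying the two natural missing steps (the polar factorisation $S = R\eta$ with $R \perp \eta$ from rotational invariance, and the moment computation $\E|P\eta|^q = (d-2)/(d-2+q)$).
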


This identity can be seen in a number of ways, but essentially it follows from the folklore result that if a random vector $\xi = (\xi_1, \dots, \xi_d)$ is uniform on $S^{d-1}$, then its projection $(\xi_1, \dots, \xi_{d-2})$ onto $\R^{d-2}$ is uniform on $\mathbb{B}^{d-2}$. Specialised to $d=4$ and combined with the previous formula, it yields
\[
\vol_{2n-2}(\D^n \cap a^\perp) = \pi^{n-1}\E\left|\sum_{k=1}^n a_k\xi_k\right|^{-2}
\]
(see also \cite{KKol} and \cite{KRud} for generalisations to noncentral sections).
Thus, the upper bound \eqref{eq:OP-up} is Theorem \ref{thm:d=4} at $q=-2$, that is {\red $c_4(-2) = c_{4,2}(-2)$}. Incidentally, the lower bound \eqref{eq:OP-low} follows immediately from Jensen's inequality (see, e.g. \cite{Brz}, or \cite{KKol}, as well as \cite{CNT} for a stability result).

The sequel is devoted to proofs. First we provide some background and give a brief summary. Then we move to the proof of Theorem \ref{thm:high-dim} (which is very short) and the rest is occupied with the proof of Theorem \ref{thm:d=4}.

\subsection*{Acknowledgements.} 
We should very much like to thank Hermann K\"onig for the encouraging and helpful correspondence. {\red We are also immensely indebted to an anonymous referee for their very careful reading of the manuscript and numerous invaluable suggestions.}

\section{Proofs of the main results}

\subsection{Some background and outline}

Theorem \ref{thm:high-dim} will follow easily from the main result of \cite{BC}. As for positive moments, the point is that the range $-(d-4) < q < 0$ still warrants \emph{enough} convexity of the underlying moment functional, specifically the function $|x|^{q}$ (in fact, its $C^\infty$ regularisation/approximation) is bisubharmonic.

When $d=4$, as in Theorem \ref{thm:d=4}, this range is empty, Schur convexity/concavity does not hold, and more subtle arguments are needed. We will employ a Fourier-analytic approach (pioneered by Haagerup for random signs in \cite{Haa}). On its own however, this does not dispense of all cases. We extend an inductive argument of Nazarov and Podkorytov from \cite{NP} to our multidimensional setting and \emph{all} negative moments (building on \cite{CKT} with new ideas needed to go beyond the $-1$st moment).
The Fourier-analytic approach relies on the following integral representation of Gorin and Favorov for negative moments.

\begin{lemma}[Lemma 3 in \cite{GF}]\label{lm:formula-mom}
For a random vector $X$ in $\R^d$ and $0 < p < d$, we have
\begin{equation}\label{eq:formula-mom}
\E|X|^{-p} = K_{p,d}\int_{\R^d}\Big(\E e^{i\scal{t}{X}}\Big)|t|^{p-d} \dd t,
\end{equation}
provided that the right hand side integral exists, where
\[
K_{p,d} = 2^{-p}\pi^{-d/2}\frac{\Gamma\left(\frac{d-p}{2}\right)}{\Gamma\left(\frac{p}{2}\right)}.
\]
\end{lemma}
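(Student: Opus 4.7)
The plan is to derive this via the classical Mellin representation of the power function combined with a Gaussian representation of the heat semigroup, which effectively computes the Fourier transform of the Riesz kernel $|x|^{-p}$.

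First, I would invoke the elementary identity
$$r^{-p} = \frac{1}{\Gamma(p/2)}\int_0^\infty t^{p/2-1}e^{-tr^2}\dd t, \qquad r>0, \ p>0,$$
applied with $r = |X|$, and take expectation to obtain
$$\E|X|^{-p} = \frac{1}{\Gamma(p/2)}\int_0^\infty t^{p/2-1}\,\E e^{-t|X|^2}\dd t.$$
Next, I would exploit the fact that $e^{-t|x|^2}$ is the characteristic function at $x$ of a centered Gaussian on $\R^d$ with covariance $2t\,I_d$; equivalently, it is the Fourier transform of the Gaussian density $(4\pi t)^{-d/2}e^{-|y|^2/(4t)}$, yielding
$$\E e^{-t|X|^2} = (4\pi t)^{-d/2}\int_{\R^d} e^{-|y|^2/(4t)}\,\E e^{i\scal{y}{X}}\dd y.$$

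Substituting this into the previous display and swapping the order of integration (justified below), I get
$$\E|X|^{-p} = \frac{(4\pi)^{-d/2}}{\Gamma(p/2)}\int_{\R^d}\E e^{i\scal{y}{X}}\left(\int_0^\infty t^{(p-d)/2-1}e^{-|y|^2/(4t)}\dd t\right)\dd y.$$
The inner $t$-integral, upon the substitution $u = |y|^2/(4t)$, becomes a standard gamma integral equal to $\Gamma((d-p)/2)(|y|^2/4)^{(p-d)/2} = 2^{d-p}\Gamma((d-p)/2)|y|^{p-d}$; here the condition $p<d$ is precisely what makes the integrand integrable at $u\to\infty$ (equivalently $t\to 0$), while $p>0$ is what made the initial Mellin formula valid. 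Collecting the constants gives
$$\frac{(4\pi)^{-d/2}\cdot 2^{d-p}\Gamma((d-p)/2)}{\Gamma(p/2)} = 2^{-p}\pi^{-d/2}\frac{\Gamma((d-p)/2)}{\Gamma(p/2)} = K_{p,d},$$
which matches the claimed constant.

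The only delicate point is the interchange of the $t$ and $y$ integrations (together with the expectation). The cleanest route I would take is to first carry out the computation with $|X|$ replaced by $(|X|^2+\e)^{1/2}$ for $\e>0$: then the integrand $e^{-t(|X|^2+\e)}$ is jointly integrable in $(t,y)$ against any probability measure in $X$, so Fubini applies unconditionally, producing the analogous identity with $|y|^{p-d}$ weighted by the extra convergence-inducing factor $e^{-\e|y|^2/(4t)}$ (which after the $t$-substitution gives an additional bounded factor). One then passes to the limit $\e\to 0^+$: monotone convergence on the left gives $\E|X|^{-p}$, while on the right one uses the hypothesis that the integral in \eqref{eq:formula-mom} exists together with dominated convergence. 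The main obstacle, then, is precisely this limiting/Fubini justification under the stated weak hypothesis of existence of the right-hand side; everything else is routine computation of a gamma integral.
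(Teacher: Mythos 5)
The paper itself does not prove this lemma at all --- it is quoted verbatim from Gorin--Favorov (Lemma 3 in \cite{GF}) --- so there is nothing internal to compare against; judged on its own, your argument is the standard and correct derivation via Gaussian subordination, and the bookkeeping of constants is right: $(4\pi)^{-d/2}2^{d-p}\Gamma(\frac{d-p}{2})/\Gamma(\frac{p}{2})=K_{p,d}$. Two small points deserve care. First, a slip in the description of the regularised identity: replacing $|X|$ by $(|X|^2+\e)^{1/2}$ inserts the factor $e^{-\e t}$ into the $t$-integral (not $e^{-\e|y|^2/(4t)}$); after the substitution $u=|y|^2/(4t)$ the inner integral becomes
\begin{equation*}
2^{d-p}|y|^{p-d}\int_0^\infty u^{\frac{d-p}{2}-1}e^{-u}e^{-\e|y|^2/(4u)}\,\dd u = 2^{d-p}\Gamma\!\left(\tfrac{d-p}{2}\right)|y|^{p-d}\,w_\e(y),
\end{equation*}
with a weight $0<w_\e\leq 1$ tending to $1$ pointwise, which is what your sketch intends and is all that is needed. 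Second, your final passage to the limit invokes dominated convergence with the hypothesis that the right-hand side of \eqref{eq:formula-mom} ``exists''; this works verbatim only if existence is read as absolute convergence of $\int|\E e^{i\scal{y}{X}}|\,|y|^{p-d}\dd y$, which supplies the dominating function (and is what holds in every application in this paper, where the integrand decays like a power of $t$ strictly faster than $t^{-1}$). If one wanted the lemma under a merely improper (conditionally convergent) interpretation of the integral, the dominated convergence step would have to be replaced by a finer Abelian-type continuity argument. With that reading made explicit, your proof is complete and self-contained, which is arguably a gain over the paper's bare citation.
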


Of course, the Fourier transform (the characteristic function) goes hand in hand with independence. The trade-off is that when applied to sums of independent random vectors uniform on spheres, highly-oscillating integrands appear, more precisely, the Bessel functions. To recall, for integral $k \geq 0$ and real $x$, we use the notation
\[
(x)_k = \frac{\Gamma(x+k)}{\Gamma(x)} = x(x+1)\dots(x+k-1)
\]
for the rising factorial (Pochhammer symbol). 
Throughout,
\[
J_\nu(t) = \sum_{k=0}^\infty \frac{(-1)^k}{k!\Gamma(k+\nu+1)}\left(\frac{t}{2}\right)^{2k+\nu}
\]
is the Bessel function of the first kind with parameter $\nu > 0$. We also introduce the function
\begin{equation}\label{eq:defjj}
\jj_\nu(t) = 2^\nu\Gamma(\nu+1)t^{-\nu}J_\nu(t) = \sum_{k=0}^\infty \frac{(-1)^k}{k!(\nu+1)_k}\left(\frac{t}{2}\right)^{2k}.
\end{equation}
Its importance stems from the fact that for a random vector $\xi$ uniform on the unit Euclidean sphere $S^{d-1}$ in $\R^d$ and a vector $v$ in $\R^d$, we have
\begin{equation}\label{eq:jj}
\E e^{i\scal{v}{\xi}} = \jj_{d/2-1}(|v|)
\end{equation}
(see, e.g. the proof of Proposition 10 in \cite{KK}). This combined with Lemma \ref{lm:formula-mom} gives the following corollary.

\begin{corollary}\label{cor:neg-mom-rot}
For independent, rotationally invariant random vectors $X_1, \dots, X_n$ in $\R^d$ and $0 < p < d$, we have
\begin{equation}\label{eq:neg-mom-rot}
\E\left|\sum_{k=1}^n X_k\right|^{-p} = \kappa_{p,d}\int_{0}^\infty \prod_{k=1}^n\Big(\E\ \jj_{d/2-1}(t|X_k|)\Big) t^{p-1} \dd t,
\end{equation}
provided that the right hand side integral exists, where
\[
\kappa_{p,d} = 2^{1-p}\frac{\Gamma\left(\frac{d-p}{2}\right)}{\Gamma\left(\frac{d}{2}\right)\Gamma\left(\frac{p}{2}\right)}.
\]
\end{corollary}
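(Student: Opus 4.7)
The plan is straightforward: combine Lemma \ref{lm:formula-mom} applied to the sum with the independence of the $X_k$ and the characteristic function identity \eqref{eq:jj}, and then pass to polar coordinates to recover a one-dimensional integral.

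First I would set $X = \sum_{k=1}^n X_k$ and apply \eqref{eq:formula-mom}, giving
\[
\E|X|^{-p} = K_{p,d}\int_{\R^d} \Big(\E e^{i\scal{t}{X}}\Big) |t|^{p-d}\,\dd t.
\]
By independence, $\E e^{i\scal{t}{X}} = \prod_{k=1}^n \E e^{i\scal{t}{X_k}}$. Next I would exploit rotational invariance of each $X_k$: writing $X_k = R_k \xi_k$ with $R_k = |X_k|$ and $\xi_k$ uniform on $S^{d-1}$ independent of $R_k$, conditioning on $R_k$ and applying \eqref{eq:jj} yields
\[
\E e^{i\scal{t}{X_k}} = \E\Big[\E\big[e^{i R_k \scal{t}{\xi_k}} \,\big|\, R_k\big]\Big] = \E\,\jj_{d/2-1}(|t|\,|X_k|).
\]
Crucially, the resulting integrand depends on $t$ only through $|t|$.

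Second, I would switch to polar coordinates on $\R^d$. Since the integrand is radial, $\int_{\R^d} f(|t|)\,\dd t = \omega_{d-1}\int_0^\infty f(r) r^{d-1}\,\dd r$ with $\omega_{d-1} = 2\pi^{d/2}/\Gamma(d/2)$, so that
\[
\E|X|^{-p} = K_{p,d}\,\omega_{d-1} \int_0^\infty \prod_{k=1}^n \Big(\E\,\jj_{d/2-1}(t|X_k|)\Big) t^{p-1}\,\dd t.
\]
A direct computation,
\[
K_{p,d}\,\omega_{d-1} = 2^{-p}\pi^{-d/2}\frac{\Gamma\!\left(\tfrac{d-p}{2}\right)}{\Gamma\!\left(\tfrac{p}{2}\right)} \cdot \frac{2\pi^{d/2}}{\Gamma\!\left(\tfrac{d}{2}\right)} = 2^{1-p}\frac{\Gamma\!\left(\tfrac{d-p}{2}\right)}{\Gamma\!\left(\tfrac{d}{2}\right)\Gamma\!\left(\tfrac{p}{2}\right)} = \kappa_{p,d},
\]
matches the constant in the claim.

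There is essentially no obstacle here beyond bookkeeping: the hypothesis $0 < p < d$ guarantees integrability of $|t|^{p-d}$ near the origin (against the bounded characteristic function) after pairing with any radial decay at infinity, and the corollary explicitly assumes existence of the right-hand side, so Fubini is legitimate for interchanging expectation and integration. The only subtlety worth flagging is the passage to polar coordinates, which requires radiality of the integrand — precisely what rotational invariance of the $X_k$ delivers via \eqref{eq:jj}.
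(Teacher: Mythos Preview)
Your proof is correct and follows essentially the same route as the paper: apply Lemma~\ref{lm:formula-mom} to the sum, use independence together with rotational invariance and \eqref{eq:jj} to write each factor as $\E\,\jj_{d/2-1}(|t||X_k|)$, then pass to polar coordinates and identify $K_{p,d}\cdot|S^{d-1}| = \kappa_{p,d}$. The only cosmetic difference is that the paper introduces auxiliary independent uniforms $\xi_k$ on $S^{d-1}$ and writes $X_k \stackrel{d}{=} |X_k|\xi_k$ directly, whereas you condition on $R_k = |X_k|$; the content is identical.
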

\begin{proof}
Let $\xi_1, \dots, \xi_n$ be independent random vectors, each uniform on the unit Euclidean sphere $S^{d-1}$, chosen independently of the $X_k$. Then $X_k$ has the same distribution as $|X_k|\xi_k$ and \eqref{eq:formula-mom} together with \eqref{eq:jj} and integration in polar coordinates give
\begin{align*}
\E\left|\sum_{k=1}^n X_k\right|^{-p} &= K_{p,d}\int_{\R^d} \left(\prod_{k=1}^n \E e^{i\scal{t}{|X_k|\xi_k}} \right)|t|^{p-d} \dd t \\
&= K_{p,d}\int_{\R^d} \left(\prod_{k=1}^n \E\ \jj_{d/2-1}(|t||X_k|) \right)|t|^{p-d} \dd t \\
&= K_{p,d}|S^{d-1}|\int_0^\infty  \left(\prod_{k=1}^n \E\ \jj_{d/2-1}(t|X_k|) \right)t^{p-1} \dd t,
\end{align*}
where $|S^{d-1}| = \frac{2\pi^{d/2}}{\Gamma\left(\frac{d}{2}\right)}$ is the $(d-1)$-dimensional volume of the unit sphere in $\R^d$.
\end{proof}

\subsection{Proof of Theorem \ref{thm:high-dim}}

Theorem \ref{thm:high-dim} is a straightforward corollary of the following stronger Schur-concavity result. For background on Schur-majorisation, we refer for example to \cite{Bh}.
\begin{theorem}\label{thm:high-dim-Schur}
Let $d \geq 5$ and let $\xi_1, \xi_2, \dots$ be independent random vectors uniform on the unit Euclidean sphere $S^{d-1}$ in $\R^d$. For every $n \geq 1$ and $0 < p \leq d-4$, the function
\[
(x_1, \dots, x_n) \mapsto \E\left|\sum_{k=1}^n \sqrt{x_k}\xi_k\right|^{-p}
\]
is Schur-concave on $\R_+^n$.
\end{theorem}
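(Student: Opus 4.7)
The plan is to obtain Theorem~\ref{thm:high-dim-Schur} as a direct corollary of the Schur-monotonicity theorem of Baerstein--Culverhouse in \cite{BC}. In a form adapted to our setting, their criterion asserts that if a smooth radial function $\Phi \colon \R^d \to \R$ is bisubharmonic ($\Delta^2 \Phi \geq 0$), then the expectation $(x_1, \ldots, x_n) \mapsto \E\Phi\bigl(\sum_{k=1}^n \sqrt{x_k}\,\xi_k\bigr)$ is Schur-concave on $\R_+^n$. The task therefore reduces to applying this criterion to $\Phi(x) = |x|^{-p}$ (after suitable regularization).

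To verify bisubharmonicity, a direct computation on $\R^d \setminus \{0\}$ gives
\begin{align*}
\Delta|x|^{-p} &= p(p+2-d)\,|x|^{-p-2},\\
\Delta^2|x|^{-p} &= p(p+2-d)(p+2)(p+4-d)\,|x|^{-p-4}.
\end{align*}
In the range $0 < p \leq d-4$ (nontrivial only for $d \geq 5$), the four factors have signs $(+)(-)(+)(\leq 0)$, so the product is nonnegative; hence $|x|^{-p}$ is bisubharmonic. This is precisely the "enough convexity" mentioned in the outline.

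Because $|x|^{-p}$ is singular at the origin, \cite{BC} cannot be applied to it directly. I would regularize by $\Phi_\e(x) := (|x|^2 + \e)^{-p/2}$, which is $C^\infty$, radial, and increases monotonically to $|x|^{-p}$ as $\e \downarrow 0$ (since $y \mapsto y^{-p/2}$ is decreasing for $p > 0$). One then verifies---either by a parallel but more tedious direct computation, or by a convolution-with-mollifier argument exploiting that $\Delta^2 |x|^{-p} \geq 0$ holds distributionally on all of $\R^d$ in our range (note that $|x|^{-p}$ and $\Delta|x|^{-p}$ are both locally integrable when $p \leq d-4$, so no delta contributions appear at the origin)---that each $\Phi_\e$ is bisubharmonic on $\R^d$. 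Applying \cite{BC} to $\Phi_\e$ yields Schur-concavity of $(x_1, \ldots, x_n) \mapsto \E\Phi_\e(\sum \sqrt{x_k}\xi_k)$, and monotone convergence---justified since the limiting expectation $\E|\sum \sqrt{x_k}\xi_k|^{-p}$ is finite throughout $\R_+^n$ away from the origin, using $p < d-1$---transfers Schur-concavity to the limit.

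The main obstacle, modest as it is, lies in confirming bisubharmonicity of the regularization $\Phi_\e$ on \emph{all} of $\R^d$ (not merely on $\R^d \setminus \{0\}$) for every $\e > 0$, and in aligning the precise sign conventions of \cite{BC}'s Schur-monotonicity criterion with our bisubharmonicity of $|x|^{-p}$. Once these bookkeeping matters are settled, the argument is a direct invocation of \cite{BC} followed by a passage to the limit, and no new analytic ingredient is required.
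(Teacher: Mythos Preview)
Your proposal is correct and follows essentially the same approach as the paper: regularize by $\Phi_\e(x)=(|x|^2+\e)^{-p/2}$, verify bisubharmonicity, invoke \cite{BC}, and pass to the limit by monotone convergence. The paper simply carries out the ``parallel but more tedious direct computation'' you mention, obtaining $\Delta^2\Phi_\e(x)=p(p+2)(|x|^2+\e)^{-p/2-4}(A|x|^4+B|x|^2+C)$ with $A=(p-d+2)(p-d+4)$, $B=2\e(d+2)(d-4-p)$, $C=\e^2 d(d+2)$, and checks that this quadratic in $|x|^2$ is nonnegative when $0<p\leq d-4$. One small caveat: your alternative ``convolution-with-mollifier'' route would not verify bisubharmonicity of \emph{this particular} $\Phi_\e$, since $(|x|^2+\e)^{-p/2}$ is not obtained from $|x|^{-p}$ by convolution; that argument would instead produce a different smooth approximation, for which the monotone convergence step is less immediate.
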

\begin{proof}
Thanks to Lebesgue's monotone convergence theorem, it suffices to show that for every $\delta > 0$, the theorem holds with $|\cdot|^{-p}$ replaced by the function $\Psi_\delta(x) = (|x|^2+\delta)^{-p/2}$. The gain is that $\Psi_\delta$ is $C^\infty$ on $\R^d$. In view of the result of Baernstein II and Culverhouse from \cite{BC}, it suffices to show that $\Psi_\delta$ is bisubharmonic, that is $\Delta\Delta \Psi_\delta \geq 0$ on $\R^d$. We approach this directly. {\red Recall that $\Delta f(|x|)=\frac{d-1}{|x|}f'(|x|)+f''(|x|)$ for a rotation invariant function $f(|x|)$ on $\R^d$, $f \in C^2(\R_+)$}. We have,
\[
\Delta\Delta\Psi_\delta(x) = p(p+2)\left(|x|^2+\delta\right)^{-\frac{p}{2}-4}(A|x|^4+B|x|^2+C),
\]
where $A=(p-d+2)(p-d+4)$, $B=2\delta(d+2)(-p+d-4)$ and $C=\delta^2d(d+2)$. For $p<d-4$, plainly $A>0$ and $B^2-4AC = 8\delta^2(d+2)(p+4)(p-d+4)<0$. This shows that $\Psi_\delta$ is bisubharmonic on $\R^d$ for every $\delta > 0$.
\end{proof}

\begin{remark}\label{rem:bisubharm-equiv}
The crux of Baernstein II and Culverhouse's work is the observation that the bisubharmonicity of a continuous function $\Psi$ on $\R^d$ on one hand is sufficient for the Schur-convexity of the corresponding moment functional from Theorem \ref{thm:high-dim-Schur}, $\E\Psi\left(\sum_{k=1}^n \sqrt{x_k}\xi_k\right)$ (and necessary when $\Psi$ is radial), and on the other hand, it is equivalent to the convexity of the function
\[
t \mapsto \E\Psi(v + \sqrt{t}\xi)
\]
on $\R_+$ for every $v \in \R^d$. In the sequel, we will need to examine the behaviour of this function on $(0,1)$ for unit vectors $v$ when $\Psi(x) = |x|^{-p}$ (see Section \ref{sec:two-coeff} below). 
\end{remark}

\subsection{Outline of the proof of Theorem \ref{thm:d=4}}
Recall that here $d=4$ and $\xi_1, \xi_2, \dots$ are independent random vectors uniform on the unit Euclidean sphere $S^{3}$ in $\R^4$. For notational convenience, we put $q = -p$, $0 < p < 3$ and set
\begin{align}\label{eq:C2p}
C_{2}(p) &= c_{4,2}(q)^q = \E\left|\frac{\xi_1+\xi_2}{\sqrt{2}}\right|^{-p} = 2^{p/2}\frac{\Gamma(3-p)}{\Gamma\left(2-\frac{p}{2}\right)\Gamma\left(3-\frac{p}{2}\right)}, \\\label{eq:Cinfp}
C_{\infty}(p) &= c_{4,\infty}(q)^q = \E\left|\frac{Z}{2}\right|^{-p} = 2^{p/2}\Gamma\left(2-\frac{p}{2}\right),
\end{align}
where $Z$ is a standard Gaussian random vector in $\R^4$ (consult \eqref{eq:const-2} and \eqref{eq:const-inf} to justify the explicit expressions on the right hand sides). Moreover, let $C(p)$ be the best constant such that the following equivalent form of \eqref{eq:khin},
\begin{equation}\label{eq:OPneg}
\E\left|\sum_{k=1}^n a_k\xi_k\right|^{-p} \leq C(p)\left(\sum_{k=1}^n a_k^2\right)^{-p/2}
\end{equation}
holds for every $n \geq 1$ and every real scalars $a_1, \dots, a_n$.

Theorem \ref{thm:d=4} is a consequence of the next two results, where we break it up into two regimes.

\begin{theorem}\label{thm:OPneg-p<2}
For $0 < p \leq 2$, we have $C(p) = C_\infty(p)$.
\end{theorem}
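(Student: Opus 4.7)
Applying Corollary~\ref{cor:neg-mom-rot} with $d=4$ recasts the inequality \eqref{eq:OPneg} with $C(p)=C_\infty(p)$ as
\[
\int_0^\infty \prod_{k=1}^n \jj_1(t a_k)\, t^{p-1}\,dt \;\leq\; \int_0^\infty e^{-t^2/8}\, t^{p-1}\,dt,
\]
valid for every unit vector $(a_k) \in \R^n_+$. Here the right-hand side equals $\kappa_{p,4}^{-1}C_\infty(p)$, obtained by applying Lemma~\ref{lm:formula-mom} to the Gaussian $Z/2$ (whose characteristic function is $e^{-|t|^2/8}$) and integrating in polar coordinates.

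I would then proceed by induction on $n$, adapting the Nazarov--Podkorytov scheme from \cite{NP} as refined in \cite{CKT}. The preparatory analytic ingredients are: (a) a Gaussian-type pointwise bound $\jj_1(s) \leq e^{-s^2/8}$ on an explicit range $[0,s_0]$, visible from matching Taylor expansions $\jj_1(s)=1-s^2/8+s^4/192-\cdots$ and $e^{-s^2/8}=1-s^2/8+s^4/128-\cdots$; (b) the oscillatory decay $|\jj_1(s)|=O(s^{-3/2})$ together with $|\jj_1|\leq 1$ for tail control; and (c) the base case $n=2$, equivalent via \eqref{eq:C2p}--\eqref{eq:Cinfp} to the elementary inequality $C_2(p)\leq C_\infty(p)$ on $0<p\leq 2$, whose equality case at $p=2$ is exactly the phase-transition point $q_4^*=-2$ (while $n=1$ reduces to the monotonicity of $C_\infty(p)$ starting from $C_\infty(0)=1$).

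For the inductive step I would split according to the size of the largest coefficient $a_1=\max_k a_k$. In the small-coefficient regime $a_1\leq 1/\sqrt{2}$, the pointwise bound (a) propagates through the product for $t\leq s_0/a_1$, yielding $\prod_k \jj_1(ta_k)\leq e^{-t^2/8}$ over the bulk of the integration range, while on the complementary tail the oscillatory decay of the largest-coefficient factor combined with $|\jj_1|\leq 1$ on the others supplies the required control. In the opposite regime $a_1>1/\sqrt{2}$, I would reduce to $n-1$ coefficients by isolating the factor $\jj_1(ta_1)$ and comparing it against a mixture of the extremal configurations $a_1=1$ and $a_1=1/\sqrt{2}$, via a sign-crossing argument of Nazarov--Podkorytov type (or a convexity argument in the spirit of Remark~\ref{rem:bisubharm-equiv}); at these extremes the inductive hypothesis or the $n=2$ base case applies directly.

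The principal obstacle is that $\jj_1$ oscillates and changes sign, so no pointwise domination $\jj_1(s)\leq e^{-s^2/8}$ can hold on all of $[0,\infty)$; the induction must be arranged to confine the contribution of the oscillatory range and to rule out constructive interference among the factors $\jj_1(ta_k)$. A further subtlety compared to the $-1$st moment case of \cite{CKT} is that for small $p>0$ the weight $t^{p-1}$ concentrates the integral near $t=0$, where the Gaussian approximation of (a) is sharpest and must be controlled quantitatively with greater care than in the previously treated regime.
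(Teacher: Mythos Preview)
Your proposal identifies the correct Fourier reformulation and the overall architecture (split by the size of the largest coefficient, induct on $n$), but it is missing the two concrete mechanisms that actually make the argument go through, and as written the sketch has real gaps.

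\textbf{The small-coefficient regime.} You propose to control $\prod_k \jj_1(ta_k)$ by the pointwise bound $\jj_1(s)\leq e^{-s^2/8}$ on a bulk interval and by crude decay on the tail. This does not work: once several factors are in the oscillatory zone, constructive interference can make the product positive and of order $\prod |\jj_1(ta_k)|$, and the single-factor tail bound $|\jj_1(ta_1)|=O(t^{-3/2})$ combined with $|\jj_1|\leq 1$ on the others only gives $O(t^{p-5/2})$ on the tail, which is not small enough to beat the Gaussian for the full range $0<p\leq 2$. The paper's device here is different and is the key step you are missing: apply H\"older's inequality with exponents $a_k^{-2}$ to the product \emph{inside} the integral, obtaining
\[
\int_0^\infty \prod_k \jj_1(ta_k)\,t^{p-1}\,dt \;\leq\; \prod_k \left(\int_0^\infty |\jj_1(ta_k)|^{a_k^{-2}}t^{p-1}\,dt\right)^{a_k^2}
= \prod_k \big(a_k^{-p}F(p,a_k^{-2})\big)^{a_k^2}.
\]
This decouples the oscillation problem into a \emph{single} integral inequality $s^{p/2}F(p,s)\leq \int_0^\infty e^{-t^2/8}t^{p-1}\,dt$ for each $s=a_k^{-2}\geq 2$, which is then verified by a mix of explicit envelope bounds on $\jj_1$ and an interpolation-in-$s$ argument (Lemmas~\ref{lm:j-U}--\ref{lm:interp>2}). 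Without H\"older, there is no clean way to ``rule out constructive interference''.

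\textbf{The large-coefficient regime and the induction.} Your plan to ``isolate $\jj_1(ta_1)$ and compare against a mixture of extremal configurations'' is too vague to carry weight; in particular a sign-crossing argument on a single factor does not obviously interact well with the remaining oscillating product. The paper's inductive scheme is structurally different: it works in physical space, not Fourier space. One proves the \emph{strengthened} inequality
\[
\E\Big|\xi_1+\sum_{k\geq 2}a_k\xi_k\Big|^{-p}\leq C_\infty(p)\,\Phi_p\!\Big(\sum_{k\geq 2}a_k^2\Big),
\]
where $\Phi_p$ is the odd reflection of $(1+x)^{-p/2}$ about its value at $x=1$ (so $\Phi_p\leq \phi_p$ and $\Phi_p$ is midpoint-concave on $[0,2]$). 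The inductive step merges the two smallest coefficients via the identity $(\xi_n,\xi_{n+1})\stackrel{d}{=}(\xi_n,Q\xi_n)$ for a random orthogonal $Q$, together with Lemma~\ref{lm:vec-to-1dim}, and then invokes the concavity of $\Phi_p$; the small-coefficient case (Case~2.1) is closed precisely by the H\"older/integral-inequality step above. The base $n=2$ is \emph{not} merely $C_2(p)\leq C_\infty(p)$ but the sharper two-point inequality of Lemma~\ref{lm:ind-base}, and in fact this base fails for small $p$ (roughly $p<0.2$); the paper handles $0<p\leq \tfrac14$ by a separate route (a wider range $s\geq 1.3$ in the integral inequality plus a one-line projection bound, Steps~3--4). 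Your sketch does not anticipate this breakdown.

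In short: the H\"older decoupling and the $\Phi_p$-strengthened induction (with the orthogonal-matrix merging trick) are the load-bearing ideas, and neither is present in your proposal.
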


\begin{theorem}\label{thm:OPneg-p>2}
For $2 < p < 3$, we have $C(p) = C_2(p)$.
\end{theorem}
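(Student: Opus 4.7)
By Corollary~\ref{cor:neg-mom-rot} with $d=4$, writing $\jj_1(t) = 2J_1(t)/t$ and $a = (a_1,\dots,a_n)$ with $a_k \geq 0$ and $\sum a_k^2 = 1$,
\begin{equation*}
\E\Big|\sum_{k=1}^n a_k\xi_k\Big|^{-p} = \kappa_{p,4}\,I_p(a), \qquad I_p(a) := \int_0^\infty \prod_{k=1}^n \jj_1(a_k t)\,t^{p-1}\,dt.
\end{equation*}
Since $C_2(p) = \kappa_{p,4}\,I_p(\tfrac{1}{\sqrt2},\tfrac{1}{\sqrt2})$, the task is to show that $I_p$ is maximized over the unit $\ell^2$-sphere of coefficient sequences at the two-coefficient symmetric configuration $(1/\sqrt2,1/\sqrt2,0,\dots)$. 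I propose a two-step plan: handle the extremum for $n=2$ directly, and then reduce $n\geq 3$ to $n=2$ along the Nazarov--Podkorytov lines of \cite{NP,CKT}.

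For $n=2$, setting $a_1 = \cos\theta$, $a_2 = \sin\theta$ with $\theta\in(0,\pi/4]$, conditioning on $\xi_1$ and using the density $\tfrac{2}{\pi}(1-y^2)^{1/2}$ of $\langle\xi_1,\xi_2\rangle$ on $[-1,1]$ (the $d=4$ instance of the formula in the introduction) yields
\begin{equation*}
\E|a_1\xi_1 + a_2\xi_2|^{-p} = \Phi(\sin 2\theta), \qquad \Phi(s) := \frac{2}{\pi}\int_{-1}^1 (1+sy)^{-p/2}(1-y^2)^{1/2}\,dy.
\end{equation*}
Expanding $(1+sy)^{-p/2}$ in a binomial series, the odd-$y$ terms integrate to zero against $(1-y^2)^{1/2}$, and a Beta-integral computation gives $\Phi(s) = \sum_{m\geq 0} c_m\,s^{2m}$, where each $c_m$ is a positive multiple of the Pochhammer symbol $(p/2)_{2m}$. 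Hence $\Phi$ is strictly increasing on $[0,1]$ and attains its maximum $C_2(p)$ at $s=1$. (The monotonicity holds for all $p > 0$; the phase transition enters only through the reduction step.)

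For $n\geq 3$, I would isolate the smallest coefficient and write $I_p(a) = \int_0^\infty \jj_1(a_n t)\,G(t)\,dt$ with $G(t) := \big[\prod_{k<n}\jj_1(a_k t)\big]t^{p-1}$. The aim is a \textbf{merging inequality} $\int \jj_1(a_n t)\,G(t)\,dt \leq \int h(t)\,G(t)\,dt$ for a carefully chosen auxiliary $h$ such that the right-hand side equals $I_p$ evaluated at a configuration with strictly fewer nonzero coefficients (for instance $(a_1,\dots,a_{n-2},\sqrt{a_{n-1}^2+a_n^2})$). The Nazarov--Podkorytov distribution-function lemma reduces this inequality to a one-dimensional sign-change count of $\jj_1(a_n t) - h(t)$ matched against a monotonicity property of an auxiliary ratio built from $G$; iterating the merging step eventually brings $n$ down to $2$, where the previous paragraph applies. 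The main obstacle lies here: $\jj_1$ oscillates and changes sign, and for $p\in(2,3)$ the weight $t^{p-1}$ amplifies the large-$t$ Bessel tail, so unlike the $d=3$, $|q|<1$ setting of \cite{CKT} the required sign analysis cannot be reduced to a Taylor perturbation near $t=0$. Engineering the right $h$ (and the matching monotonicity partner) that remains valid throughout the full phase-transition regime $p\in(2,3)$ is the ``new ingredient beyond the $-1$st moment'' flagged in the outline.
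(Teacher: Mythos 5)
Your $n=2$ computation is correct and agrees with what the paper uses (it is the $d=4$ case of Lemma \ref{lm:two-coeff-expansion}: for $2<p<3$ all hypergeometric coefficients are positive, so $t\mapsto \E|e_1+\sqrt{t}\,\xi|^{-p}$ is increasing and the two-coefficient maximum sits at $a_1=a_2=1/\sqrt2$). But the heart of the theorem is the reduction from general $n$ to $n=2$, and there your proposal has a genuine gap: not only is the merging step left as a plan, the merging inequality you aim for is actually \emph{false} in the regime $2<p<3$. Take $(a_1,a_2,a_3)=(\sqrt{1-2\e^2},\e,\e)$ with $\e$ small. Conditioning on $\xi_2,\xi_3$ and using rotational invariance, the three-coefficient moment equals $a_1^{-p}\,\E_Y\, g\bigl(2\e^2(1+Y)/a_1^2\bigr)$, where $Y=\scal{\xi_2}{\xi_3}$ is symmetric with mean $0$ and $g(t)=\E|e_1+\sqrt{t}\,\xi|^{-p}={}_2F_1\bigl(\tfrac p2,\tfrac{p-2}2;2;t\bigr)$; since every power-series coefficient of $g$ is positive for $2<p<3$, $g$ is strictly convex on $(0,1)$, so Jensen gives a value strictly \emph{larger} than $a_1^{-p}g(2\e^2/a_1^2)$, which is exactly the merged configuration $(a_1,\sqrt2\,\e)$. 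Thus merging the two smallest coefficients can strictly decrease the functional, and no unconditional Nazarov--Podkorytov merging scheme of the kind you describe can iterate down to $n=2$ (this is also why merging from $n=2$ to $n=1$ must fail: $C_2(p)>1$).

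The paper's proof circumvents this by splitting into two regimes rather than merging monotonically. When all coefficients satisfy $|a_k|\le\frac1{\sqrt2}(\sum a_j^2)^{1/2}$, it applies H\"older's inequality to the Bessel representation \eqref{eq:sum-Holder} and reduces everything (uniformly in $n$) to the one-dimensional integral inequality \eqref{eq:perfect1}, $s^{p/2}F(p,s)\le 2^{p/2}F(p,2)$ for $s\ge2$, proved in Lemma \ref{lm:Ht} via explicit pointwise bounds on $\jj_1$ and interpolation in $s$ (deliberately avoiding sign-change counting); this yields Corollary \ref{cor:all-small}. When one coefficient is large, it runs an induction on $n$, but on a \emph{strengthened} inequality in which $\phi_p(x)=(1+x)^{-p/2}$ is replaced by the reflected function $\Phi_p\le\phi_p$: the two smallest vectors are combined in distribution via a random rotation (Lemma \ref{lm:vec-to-1dim} plus the identity in Case 2.2), and the extended midpoint concavity of $\Phi_p$ is precisely the device that absorbs the Jensen-direction obstruction exhibited above, while your $n=2$ monotonicity observation reappears only as the inductive base. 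You would need both of these ingredients (or a substitute for them) to close the argument; as it stands, Steps beyond $n=2$ are missing and the proposed route cannot be repaired merely by a cleverer choice of the auxiliary function $h$.
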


As optimality is clear, for the proofs of these theorems, we need to show that \eqref{eq:OPneg} holds with the specified values of $C(p)$.

\subsubsection{Outline of the proof of Theorem \ref{thm:OPneg-p<2}}\label{sec:beginning-proof-p<2}
Thanks to homogeneity, we can assume that the $a_k$ are positive with $\sum a_k^2 = 1$. Using the Fourier-analytic formula for negative moments \eqref{eq:neg-mom-rot} and H\"older's inequality, we obtain
\begin{align}\label{eq:sum-Holder}
\E\left|\sum_{k=1}^n a_k\xi_k\right|^{-p} &= \kappa_{p,4}\int_0^\infty \left(\prod_{k=1}^n \jj_1(a_kt) \right) t^{p-1} \dd t \\\notag
&\leq \kappa_{p,4}\prod_{k=1}^n \left(\int_0^\infty |\jj_1(a_k t)|^{a_k^{-2}} t^{p-1} \dd t\right)^{a_k^2} \\\notag
&= \kappa_{p,4}\prod_{k=1}^n \left(a_k^{-p}F\left(p, a_k^{-2}\right)\right)^{a_k^2}.
\end{align}
where the following function has emerged (after a change of variables in the last line)
\begin{equation}\label{eq:defF}
F(p,s) = \int_0^\infty |\jj_1(t)|^st^{p-1} \dd t, \qquad p, s  >0.
\end{equation}
This integral is finite as long as $p < \frac{3s}{2}$ because $\jj_1(t) = O(t^{-3/2})$ (see \eqref{eq:j1-w1} below).

The next step is to maximise, individually, the terms in the product on the right hand side of \eqref{eq:sum-Holder}, that is to look into $\sup_{s \geq 1} s^{p/2}F(p,s)$. Heuristically, if we aim at proving that the worst case is Gaussian, that is when $a_1 = \dots = a_n = \frac{1}{\sqrt{n}}$ with $n \to \infty$, a natural candidate for this supremum is then given by $s \to \infty$, which would correspond to the inequality
\begin{equation}\label{eq:perfect}
\begin{split}
s^{p/2}F(p,s) \leq \lim_{s \to \infty} s^{p/2}\int_0^\infty |\jj_1(t)|^st^{p-1} \dd t &= \lim_{s \to \infty} \int_0^\infty |\jj_1(t/\sqrt{s})|^s t^{p-1}\dd t \\
&= \int_0^\infty e^{-t^2/8}t^{p-1} \dd t
\end{split}
\end{equation}
(the last line can be justified using $\jj_1(t) = 1 - \frac{t^2}{8} + o(t^2) = e^{-t^2/8} + o(t^2)${\red , recall the power-series definition \eqref{eq:defjj} of $\jj_1$}). Were it true for all values of $p$ and $s$, we would get
\[
\E\left|\sum_{k=1}^n a_k\xi_k\right|^{-p} \leq \kappa_{p,4} \int_0^\infty e^{-t^2/8}t^{p-1} \dd t = C_\infty(p),
\]
finishing the proof. Unfortunately, the integral inequality \eqref{eq:perfect} fails in certain ranges  of $p$ and $s$, where additional arguments and ideas are needed. This is how we will proceed.

\emph{Step 1: Inequality \eqref{eq:perfect} holds for all $0 < p \leq 2$ and $s \geq 2$.} 

As above, this gives the following partial case of the theorem when all coefficients $a_k$ are small.

\begin{corollary}\label{cor:all-small-p<2}
When $0 < p \leq 2$, inequality \eqref{eq:OPneg} holds with $C(p) = C_\infty(p)$ for every $n \geq 1$ and all real numbers $a_1, \dots, a_n$ with $\max_{k \leq n} |a_k| \leq \frac{1}{\sqrt{2}}\left(\sum_{k=1}^n a_k^2\right)^{1/2}$.
\end{corollary}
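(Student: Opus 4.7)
The plan is to assume the ``Step 1'' integral inequality flagged in Section \ref{sec:beginning-proof-p<2}, namely that
\[
s^{p/2} F(p, s) \;\leq\; \int_0^\infty e^{-t^2/8}\, t^{p-1}\,\dd t \qquad \text{for all } 0 < p \leq 2, \; s \geq 2,
\]
and deduce the corollary from the H\"older bound \eqref{eq:sum-Holder} that has already been set up. By homogeneity of \eqref{eq:OPneg} and the symmetry of the $\xi_k$ we may normalise to $\sum_k a_k^2 = 1$ with $a_k > 0$, and in this normalisation the hypothesis $\max_k |a_k| \leq \frac{1}{\sqrt{2}}$ becomes $s_k := a_k^{-2} \geq 2$ for every $k$.

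Now take \eqref{eq:sum-Holder} and write each bracketed factor on the right hand side using $s_k$:
\[
a_k^{-p} F(p, a_k^{-2}) \;=\; s_k^{p/2} F(p, s_k) \;\leq\; I, \qquad I := \int_0^\infty e^{-t^2/8}\, t^{p-1}\,\dd t,
\]
by the Step 1 inequality (which applies precisely because $s_k \geq 2$). Since $\sum_k a_k^2 = 1$, the product $\prod_k \bigl(a_k^{-p} F(p, a_k^{-2})\bigr)^{a_k^2}$ is at most $I$, and hence
\[
\E\left|\sum_{k=1}^n a_k \xi_k\right|^{-p} \;\leq\; \kappa_{p,4}\, I.
\]
A direct change of variables $u = t^2/8$ gives $I = \sqrt{2}\cdot 8^{(p-1)/2}\,\Gamma(p/2)$, and combining with $\kappa_{p,4} = 2^{1-p}\,\Gamma(2 - p/2)/\Gamma(p/2)$ collapses the product of powers of $2$ cleanly to $\kappa_{p,4}\, I = 2^{p/2}\,\Gamma(2 - p/2) = C_\infty(p)$, matching the definition \eqref{eq:Cinfp}. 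This is exactly \eqref{eq:OPneg} with constant $C_\infty(p)$.

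The only real obstacle is of course Step 1 itself, which must be proved separately. It is morally correct since $|\jj_1(t/\sqrt{s})|^s \to e^{-t^2/8}$ pointwise via the expansion $\jj_1(t) = 1 - t^2/8 + O(t^4)$ from \eqref{eq:defjj}, so that $s^{p/2} F(p,s) = \int_0^\infty |\jj_1(t/\sqrt{s})|^s t^{p-1}\,\dd t$ tends to $I$ as $s \to \infty$; the substantive assertion is that this convergence is monotone (or at least one-sided) for $s \geq 2$ in the range $0 < p \leq 2$. Because $\jj_1$ oscillates and decays only like $t^{-3/2}$, one cannot just invoke dominated convergence and will need a careful analysis, presumably by splitting the integration region into the bulk near $t \asymp \sqrt{s}$ where the Gaussian approximation is sharp and the oscillatory tail where one exploits decay estimates on $\jj_1$; this is the part the remainder of the paper must carry out.
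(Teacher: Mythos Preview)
Your argument is correct and follows exactly the approach the paper intends: normalise, plug the Step~1 integral inequality \eqref{eq:perfect} into the H\"older bound \eqref{eq:sum-Holder} term by term (valid since each $a_k^{-2}\geq 2$), and identify $\kappa_{p,4}\int_0^\infty e^{-t^2/8}t^{p-1}\,\dd t = C_\infty(p)$. The paper does not spell out the corollary's proof separately (it just says ``As above\ldots''), but your write-up is precisely the intended derivation, with Step~1 itself established later as Lemma~\ref{lm:H}(a).
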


\emph{Step 2: For $\frac14 \leq p \leq 2$, we employ induction on $n$ to cover the case $\max_{k \leq n} |a_k| > \frac{1}{\sqrt{2}}\left(\sum_{k=1}^n a_k^2\right)^{1/2}$.} 

This will give the theorem when $p \geq \frac14$. For the induction to work, \eqref{eq:OPneg} is strengthened, but the base of the induction fails for small $p$ (roughly $p < 0.2$), hence the next two steps. Fortunately, when $p$ is \emph{small}, the integral inequality holds for a wider range of $s$.

\emph{Step 3:  Inequality \eqref{eq:perfect} holds for all $0 < p \leq \frac14$ and $s \geq 1.3$.} 

\begin{corollary}\label{cor:all-small-p-small}
When $0 < p \leq \frac14$, inequality \eqref{eq:OPneg} holds with $C(p) = C_\infty(p)$ for every $n \geq 1$ and all real numbers $a_1, \dots, a_n$ such that $\max_{k \leq n} |a_k| \leq \sqrt{\frac{10}{13}}\left(\sum_{k=1}^n a_k^2\right)^{1/2}$.
\end{corollary}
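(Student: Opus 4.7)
The plan is to deduce Corollary \ref{cor:all-small-p-small} as an immediate consequence of Step 3, exactly in parallel with how Corollary \ref{cor:all-small-p<2} was obtained from Step 1. The heavy lifting is entirely contained in Step 3 itself (the integral inequality); the corollary is merely its repackaging via the H\"older bound \eqref{eq:sum-Holder}.

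First, by homogeneity I may normalise so that $\sum_{k=1}^n a_k^2 = 1$, and without loss of generality assume $a_k > 0$. The hypothesis then reads $\max_k a_k^2 \leq \frac{10}{13}$, equivalently
\[
s_k := a_k^{-2} \geq \frac{13}{10} = 1.3 \qquad \text{for every } k=1,\dots,n.
\]
Starting from \eqref{eq:sum-Holder} and rewriting $a_k^{-p}F(p,a_k^{-2}) = s_k^{p/2}F(p,s_k)$, I get
\[
\E\left|\sum_{k=1}^n a_k\xi_k\right|^{-p} \leq \kappa_{p,4}\prod_{k=1}^n\bigl(s_k^{p/2}F(p,s_k)\bigr)^{a_k^2}.
\]

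Now I apply Step 3: for $0 < p \leq \tfrac14$ and $s \geq 1.3$,
\[
s^{p/2}F(p,s) \leq \int_0^\infty e^{-t^2/8}t^{p-1}\,\dd t.
\]
Since $s_k \geq 1.3$ for every $k$ and the right-hand side is independent of $s$, using $\sum_k a_k^2 = 1$ the product collapses:
\[
\prod_{k=1}^n\bigl(s_k^{p/2}F(p,s_k)\bigr)^{a_k^2} \leq \left(\int_0^\infty e^{-t^2/8}t^{p-1}\,\dd t\right)^{\sum_k a_k^2} = \int_0^\infty e^{-t^2/8}t^{p-1}\,\dd t.
\]
Multiplying by $\kappa_{p,4}$ and recognising the right-hand side, via the computation performed in \eqref{eq:perfect}, as $C_\infty(p) = \kappa_{p,4}\int_0^\infty e^{-t^2/8}t^{p-1}\,\dd t$ finishes the proof.

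The obstacle is therefore not here but in Step 3: extending the window of validity of the pointwise integral inequality $s^{p/2}F(p,s) \leq \int_0^\infty e^{-t^2/8}t^{p-1}\,\dd t$ from $s \geq 2$ (Step 1) down to $s \geq 1.3$ when $p$ is restricted to $(0,\tfrac14]$. This trade-off — shrinking $p$ to enlarge the range of $s$ — is precisely what buys the weaker constraint $\max_k a_k^2 \leq \tfrac{10}{13}$ in place of $\max_k a_k^2 \leq \tfrac12$, and this is what will allow the inductive Step 2 to start the base case for small $p$ via the remaining dyadic range.
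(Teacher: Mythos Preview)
Your proof is correct and follows exactly the paper's approach: the corollary is stated in the paper without a separate proof because it is derived from Step~3 in precisely the way you describe, mirroring how Corollary~\ref{cor:all-small-p<2} follows from Step~1 (and how the analogous Corollary~\ref{cor:all-small} is proved explicitly in Section~\ref{sec:beginning-proof-p>2}). The identification $\kappa_{p,4}\int_0^\infty e^{-t^2/8}t^{p-1}\,\dd t = C_\infty(p)$ is the one the paper records right after~\eqref{eq:perfect}.
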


Finally, when one of the coefficients $a_k$ is \emph{large}, the inequality holds for a different reason (we will use a sort of projection-type argument).

\emph{Step 4: When $0 < p \leq \frac14$,  inequality \eqref{eq:OPneg} holds with $C(p) = C_\infty(p)$ for every $n \geq 1$ and all real numbers $a_1, \dots, a_n$ with $\max_{k \leq n} |a_k| > \sqrt{\frac{10}{13}}\left(\sum_{k=1}^n a_k^2\right)^{1/2}$. }

\subsubsection{Outline of the proof of Theorem \ref{thm:OPneg-p>2}}\label{sec:beginning-proof-p>2}
If we want to prove that the worst case is now $n=2$ with $a_1 = a_2 = \frac{1}{\sqrt{2}}$, it is only natural to expect that $\sup_{s \geq 1} s^{p/2}F(p,s)$ is attained at $s=2$, corresponding to the integral inequality
\begin{equation}\label{eq:perfect1}
s^{p/2}F(p,s) \leq 2^{p/2}F(p,2).
\end{equation}
We will proceed similarly, with only the first two steps sufficing, as the inductive base now holds in the entire range.

\emph{Step 1: Inequality \eqref{eq:perfect1} holds for all $2 < p < 3$ and $s \geq 2$.} 

Taking this statement for granted for now, we derive the following corollary.

\begin{corollary}\label{cor:all-small}
When $2 < p < 3$, inequality \eqref{eq:OPneg} holds with $C(p) = C_2(p)$ for every $n \geq 1$ and all real numbers $a_1, \dots, a_n$ with $\max_{k \leq n} |a_k| \leq \frac{1}{\sqrt{2}}\left(\sum_{k=1}^n a_k^2\right)^{1/2}$.
\end{corollary}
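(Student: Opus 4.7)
The plan is to deduce this corollary from Step 1 (inequality \eqref{eq:perfect1}) in exactly the same way Corollary \ref{cor:all-small-p<2} is obtained from its analogue \eqref{eq:perfect} in the $p<2$ regime. Since the Fourier representation \eqref{eq:neg-mom-rot} and the Hölder bound \eqref{eq:sum-Holder} are valid throughout $0<p<d=4$, nothing in the setup has to change when crossing $p=2$.

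First I would normalize. By homogeneity, assume $\sum_{k=1}^n a_k^2 = 1$ with $a_k > 0$; the hypothesis $\max_k a_k \leq 1/\sqrt 2$ becomes $a_k^{-2} \geq 2$ for every $k$. Applying \eqref{eq:sum-Holder} (with $d=4$, so $\jj_{d/2-1}=\jj_1$) yields
\[
\E\Bigl|\sum_{k=1}^n a_k \xi_k\Bigr|^{-p} \leq \kappa_{p,4} \prod_{k=1}^n \bigl(a_k^{-p} F(p, a_k^{-2})\bigr)^{a_k^2}.
\]
Each factor $F(p, a_k^{-2})$ is finite because $p<3 \leq \tfrac{3}{2} a_k^{-2}$, so the application of Hölder with conjugate exponents $p_k = a_k^{-2}$ (satisfying $\sum 1/p_k = \sum a_k^2 = 1$) is justified.

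Next I would invoke Step 1 factor by factor. Setting $s_k := a_k^{-2} \geq 2$, inequality \eqref{eq:perfect1} gives $a_k^{-p} F(p, a_k^{-2}) = s_k^{p/2} F(p, s_k) \leq 2^{p/2} F(p, 2)$. Raising to the power $a_k^2$ and taking the product, the constraint $\sum a_k^2 = 1$ collapses the right-hand side, so
\[
\E\Bigl|\sum_{k=1}^n a_k \xi_k\Bigr|^{-p} \leq \kappa_{p,4} \cdot 2^{p/2} F(p, 2).
\]

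Finally I would identify the constant. Specializing Corollary \ref{cor:neg-mom-rot} to $n=2$ with $X_1, X_2$ uniform on $S^3$ (so $|X_k| \equiv 1$) gives $\E|\xi_1+\xi_2|^{-p} = \kappa_{p,4} F(p,2)$, hence
\[
\kappa_{p,4} \cdot 2^{p/2} F(p, 2) = 2^{p/2} \E|\xi_1+\xi_2|^{-p} = \E\Bigl|\frac{\xi_1+\xi_2}{\sqrt 2}\Bigr|^{-p} = C_2(p),
\]
as required. The only nontrivial input is Step 1 itself; once \eqref{eq:perfect1} is in hand this corollary is purely an assembly exercise, and I expect the genuine difficulty of the section to lie in the Bessel-function estimates proving \eqref{eq:perfect1} for $2<p<3$, $s\geq 2$, rather than in this deduction.
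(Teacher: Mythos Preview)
Your proposal is correct and follows essentially the same approach as the paper: normalize to $\sum a_k^2=1$, apply the H\"older bound \eqref{eq:sum-Holder}, use \eqref{eq:perfect1} on each factor (valid since $a_k^{-2}\geq 2$), and identify $\kappa_{p,4}\cdot 2^{p/2}F(p,2)=C_2(p)$ via \eqref{eq:neg-mom-rot}. The paper's proof is just a terser version of exactly this argument.
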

\begin{proof}
Assuming $\sum a_k^2 = 1$ and applying \eqref{eq:perfect1} to the right hand side of \eqref{eq:sum-Holder} yields
\begin{align*}
\E\left|\sum_{k=1}^n a_k\xi_k\right|^{-p} \leq \kappa_{p,4}\cdot 2^{p/2}F(p,2) &= 2^{p/2}\kappa_{p,4}\int_0^\infty \jj_1(t)^2t^{p-1}\dd t\\
&= 2^{p/2}\E\left|\xi_1+\xi_2\right|^{-p} = C_2(p)
\end{align*}
(for the penultimate step, recall {\red again \eqref{eq:sum-Holder}}).
\end{proof}

\emph{Step 2: For $2 < p < 3$, we employ induction on $n$ to cover the case $\max_{k \leq n} |a_k| > \frac{1}{\sqrt{2}}\left(\sum_{k=1}^n a_k^2\right)^{1/2}$.} 

To carry out these steps, we first establish a variety of indispensable technical estimates. After this has been done in the next section, we will conclude the proof in Sections \ref{sec:end-proof-p<2} and \ref{sec:end-proof-p>2}.

\section{Ancillary results}

\subsection{Two-coefficient function}\label{sec:two-coeff}

By rotational invariance, 
\[
\E|a_1\xi_1 + a_2\sqrt{t}\xi_2|^{-p} = \E|a_1e_1+a_2{\red \sqrt{t}}\xi_2|^{-p}.
\] 
We begin with some properties of the function $t \mapsto \E|a_1e_1+a_2{\red \sqrt{t}}\xi_2|^{-p}$, particularly important in the inductive part of our proof. Recall the definition of the (Gaussian) hypergeometric function which shows up very naturally, as explained in the next lemma. For real parameters $a,b,c$, it is defined for $|z|<1$ by the power series,
\[
{}_2F_1(a, b; c; z) = \sum_{k=0}^\infty\frac{(a)_k(b)_k}{(c)_k}\frac{z^k}{k!}.
\]

\begin{lemma}\label{lm:two-coeff-expansion}
Let $d \geq 1$ and let $\xi$ be a random vector uniform on the unit Euclidean sphere $S^{d-1}$ in $\R^d$. Let $p < d-1$. Then
\begin{align*}
\E|e_1 + \sqrt{t}\xi|^{-p} &= {}_2F_1\left(\frac{p}{2}, \frac{p-d+2}{2}; \frac{d}{2}; t\right) \\
&= \sum_{k=0}^\infty \frac{\left(\frac{p}{2}\right)_{k}\left(\frac{p-d+2}{2}\right)_{k}}{\left(\frac{d}{2}\right)_{k}}\frac{t^k}{k!}, \qquad 0 < t < 1.
\end{align*}
\end{lemma}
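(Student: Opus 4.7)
The plan is to expand $|e_1+\sqrt{t}\xi|^{-p} = (1+2\sqrt{t}\xi_1+t)^{-p/2}$ using the generating function for the Gegenbauer polynomials, $(1-2us+s^2)^{-\lambda}=\sum_{n\geq 0}C_n^\lambda(u)\,s^n$ (valid for $|s|<1$), specialised to $s=-\sqrt{t}$, $u=\xi_1$, $\lambda=p/2$:
\[
|e_1+\sqrt{t}\xi|^{-p} = \sum_{n=0}^\infty (-1)^n C_n^{p/2}(\xi_1)\,t^{n/2}.
\]
Since $\xi_1\stackrel{d}{=}-\xi_1$ and $C_n^\lambda$ is an odd function when $n$ is odd, taking expectations termwise leaves only even indices:
\[
\E|e_1+\sqrt{t}\xi|^{-p} = \sum_{k=0}^\infty \E[C_{2k}^{p/2}(\xi_1)]\,t^{k}.
\]

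The core task is then to evaluate each coefficient $\E[C_{2k}^{p/2}(\xi_1)]$. I would combine the classical representation
\[
C_{2k}^\lambda(x) = \frac{(-1)^k(\lambda)_k}{k!}\,{}_2F_1\!\left(-k,\,k+\lambda;\,\tfrac{1}{2};\,x^2\right)
\]
with the moment formula $\E\xi_1^{2j}=(\tfrac{1}{2})_j/(\tfrac{d}{2})_j$, obtained from the beta integral against the density $\propto(1-u^2)^{(d-3)/2}$ of $\xi_1$ noted in the introduction. The inner hypergeometric is a polynomial in $x^2$ (the series terminates at $j=k$), so exchanging the finite sum with expectation is trivial and produces
\[
\E\,{}_2F_1\!\left(-k,\,k+p/2;\,\tfrac{1}{2};\,\xi_1^2\right) = {}_2F_1\!\left(-k,\,k+p/2;\,d/2;\,1\right) = \frac{(d/2-k-p/2)_k}{(d/2)_k}
\]
by terminating Chu--Vandermonde. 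The Pochhammer reversal $(\alpha)_k=(-1)^k(1-k-\alpha)_k$ applied with $\alpha=d/2-k-p/2$ rewrites this as $(-1)^k((p-d+2)/2)_k/(d/2)_k$; the sign cancels that from the Gegenbauer identity, yielding
\[
\E[C_{2k}^{p/2}(\xi_1)] = \frac{(p/2)_k\,((p-d+2)/2)_k}{k!\,(d/2)_k},
\]
which is exactly the coefficient of $t^k$ in ${}_2F_1(p/2,(p-d+2)/2;d/2;t)$.

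The only analytic subtlety is the termwise exchange of sum and expectation in the first display; this follows by dominated convergence since, for $\lambda=p/2>0$ and fixed $|s|<1$, the Gegenbauer series is bounded uniformly in $u\in[-1,1]$ by $C_n^\lambda(1)|s|^n$, whose sum equals $(1-|s|)^{-2\lambda}<\infty$. The hypothesis $p<d-1$ plays no role on the open interval $0<t<1$ itself; it is precisely the condition ensuring the boundary values $\E|e_1+\xi|^{-p}$ and ${}_2F_1(p/2,(p-d+2)/2;d/2;1)$ (via Gauss's summation) are finite, which is the regime where this lemma will be applied.
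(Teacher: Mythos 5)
Your proof is correct on its stated range and takes a genuinely different route from the paper's. The paper writes $\E|e_1+\sqrt{t}\xi|^{-p}=(1+t)^{-p/2}\,\E\bigl(1+\tfrac{2\sqrt{t}}{1+t}\theta\bigr)^{-p/2}$ with $\theta=\scal{e_1}{\xi}$, expands by the binomial series, inserts the even moments $\E\theta^{2k}$, recognises the result as $(1+t)^{-p/2}\,{}_2F_1\bigl(\tfrac{p}{4},\tfrac{p+2}{4};\tfrac{d}{2};\tfrac{4t}{(1+t)^2}\bigr)$, and then passes to ${}_2F_1\bigl(\tfrac{p}{2},\tfrac{p-d+2}{2};\tfrac{d}{2};t\bigr)$ via Kummer's quadratic transformation (15.3.26 in \cite{AS}). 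You instead expand directly in powers of $\sqrt{t}$ through the Gegenbauer generating function and evaluate $\E\,C_{2k}^{p/2}(\xi_1)$ from the terminating representation ${}_2F_1(-k,k+p/2;\tfrac12;x^2)$, where the even moments of $\xi_1$ trade the parameter $\tfrac12$ for $\tfrac{d}{2}$ and Chu--Vandermonde closes the sum; your Pochhammer bookkeeping checks out (e.g.\ at $d=4$, $k=1$ it reproduces the coefficient $\tfrac{p(p-2)}{8}$ of Corollary \ref{cor:two-coeff-R4}), so no quadratic transformation is needed. The trade-off: the paper's argument uses only the binomial series plus one table identity (and has the divergence-theorem alternative of Remark \ref{rem:alt-proof}), while yours leans on classical Gegenbauer facts ($|C_n^\lambda(u)|\le C_n^\lambda(1)$ for $\lambda>0$, the explicit ${}_2F_1$ form of $C_{2k}^\lambda$) and Chu--Vandermonde, in return making the coefficient-by-coefficient structure completely explicit. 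One caveat worth stating: the lemma is asserted for all $p<d-1$, including $p\le 0$, and the paper does use that range through Corollary \ref{cor:const-2} (negative $p$ corresponds to positive $q$ in \eqref{eq:const-2}), whereas your dominated-convergence step is keyed to $\lambda=p/2>0$. This is easily repaired: for fixed $0<t<1$ both sides are analytic in $p$ (the left side because $|e_1+\sqrt{t}\xi|$ is bounded away from $0$ and $\infty$), so the identity extends from $p>0$ by continuation, or one can note that the binomial expansion underlying the paper's proof works for any real exponent. Likewise, your moment formula for $\E\xi_1^{2j}$ via the density $\propto(1-u^2)^{(d-3)/2}$ needs the separate (trivial) check at $d=1$, where $\xi_1=\pm 1$.
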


\begin{proof}
Fix $0 < t < 1$. Let $\theta = \scal{e_1}{\xi}$ be the first coordinate of $\xi$. Thus
\begin{align*}
\E|e_1 + \sqrt{t}\xi|^{-p} &= \E(1+2\sqrt{t}\theta + t)^{-p/2} \\
&= (1+t)^{-p/2}\E\left(1 + \frac{2\sqrt{t}}{1+t}\theta\right)^{-p/2} \\
&= (1+t)^{-p/2}\sum_{k=0}^\infty \binom{-p/2}{2k}(\E \theta^{2k})\left(\frac{2\sqrt{t}}{1+t}\right)^{2k}.
\end{align*}
From \eqref{eq:jj},
\[
\E \theta^{2k} = \frac{(2k)!}{2^{2k}\cdot k!(d/2)_k},
\]
hence
\[
\E|e_1 + \sqrt{t}\xi|^{-p} =(1+t)^{-p/2}\sum_{k=0}^\infty  \frac{(p/2)_{2k}}{2^{2k}(d/2)_k}\frac{1}{k!}\left(\frac{4t}{(1+t)^2}\right)^{k}.
\]
Since $\left(p/2\right)_{2k}2^{-2k} = \left(\frac{p}{4}\right)_k\left(\frac{p+2}{4}\right)_k$, we get
\begin{align*}
\E|e_1 + \sqrt{t}\xi|^{-p} &= (1+t)^{-p/2}{}_{2}F_1\left(\frac{p}{4},\frac{p+2}{4};\frac{d}{2};\frac{4t}{(1+t)^2}\right) \\
&= {}_{2}F_1\left(\frac{p}{2},\frac{p-d+2}{2};\frac{d}{2};t\right),
\end{align*}
where the last identity follows from Kummer's quadratic transformations for the hypergeometric function ${}_{2}F_1$ (see, e.g. 15.3.26 in \cite{AS}). The desired power series expansion now follows from the definition of ${}_2F_1$.
\end{proof}

This in particular yields the explicit expression for $c_{d,2}(q)$ from \eqref{eq:const-2}.

\begin{corollary}\label{cor:const-2}
For $d \geq 1$ and $p < d-1$, we have
\[
\E|\xi_1+\xi_2|^{-p} = {}_2F_1\left(\frac{p}{2}, \frac{p-d+2}{2};\frac{d}{2};1\right) = \frac{\Gamma\left(\frac{d}{2}\right)\Gamma(d-p-1)}{\Gamma\left(\frac{d-p}{2}\right)\Gamma\left(d-\frac{p}{2}-1\right)}.
\]
\end{corollary}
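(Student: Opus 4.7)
The plan is to obtain the corollary by evaluating the power series from Lemma \ref{lm:two-coeff-expansion} at the boundary $t \to 1^-$ and applying Gauss's summation formula for the hypergeometric function.

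First, by rotational invariance (conditioning on $\xi_2$ and rotating so that $\xi_2 = e_1$), we have $\E|\xi_1+\xi_2|^{-p} = \E|e_1+\xi|^{-p}$, where $\xi$ is uniform on $S^{d-1}$. Lemma \ref{lm:two-coeff-expansion} identifies $\E|e_1+\sqrt{t}\xi|^{-p}$ with ${}_2F_1\!\left(\tfrac{p}{2},\tfrac{p-d+2}{2};\tfrac{d}{2};t\right)$ for all $0<t<1$. It therefore suffices to let $t\uparrow 1$ on both sides and check that each side converges to the claimed limit.

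For the right-hand side, the relevant parameters $a=\tfrac{p}{2}$, $b=\tfrac{p-d+2}{2}$, $c=\tfrac{d}{2}$ satisfy $c-a-b=d-p-1>0$ under the hypothesis $p<d-1$. Hence Gauss's formula ${}_2F_1(a,b;c;1)=\frac{\Gamma(c)\Gamma(c-a-b)}{\Gamma(c-a)\Gamma(c-b)}$ applies (and, equivalently, the defining series is absolutely convergent at $t=1$). A direct substitution gives
\[
{}_2F_1\!\left(\tfrac{p}{2},\tfrac{p-d+2}{2};\tfrac{d}{2};1\right) = \frac{\Gamma\!\left(\tfrac{d}{2}\right)\Gamma(d-p-1)}{\Gamma\!\left(\tfrac{d-p}{2}\right)\Gamma\!\left(d-\tfrac{p}{2}-1\right)},
\]
matching the stated expression.

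For the left-hand side, the goal is to show $\E|e_1+\sqrt{t}\xi|^{-p} \to \E|e_1+\xi|^{-p}$ as $t\uparrow 1$. Writing $\theta=\scal{e_1}{\xi}$, whose density on $(-1,1)$ is proportional to $(1-\theta^2)^{(d-3)/2}$, we have $|e_1+\sqrt{t}\xi|^2 = (1-\sqrt{t})^2 + 2\sqrt{t}(1+\theta)$, so for $p>0$ and $t\in[1/2,1]$,
\[
|e_1+\sqrt{t}\xi|^{-p} \leq (2\sqrt{t})^{-p/2}(1+\theta)^{-p/2} \leq 2^{-p/4}(1+\theta)^{-p/2}.
\]
The majorant $(1+\theta)^{-p/2}(1-\theta^2)^{(d-3)/2}$ is integrable on $(-1,1)$ precisely when $-p/2+(d-3)/2>-1$, i.e.\ $p<d-1$, which is our hypothesis. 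For $p\leq 0$ the integrand is bounded and convergence is immediate. In either case dominated convergence delivers the desired limit, and the proof is complete.

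The only real obstacle is locating an integrable majorant, since the singularity of $|e_1+\xi|^{-p}$ at the antipode is borderline; but this is exactly controlled by the very condition $p<d-1$ that also makes Gauss's summation applicable, so the two limits are compatible by design.
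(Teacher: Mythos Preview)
Your proof is correct and follows essentially the same route as the paper: invoke Lemma~\ref{lm:two-coeff-expansion} and then Gauss's summation identity. The paper's proof is a single sentence citing Gauss's formula, whereas you additionally supply the dominated-convergence justification for passing to the limit $t\uparrow 1$ on the probabilistic side; this extra care is sound (your majorant and the integrability check under $p<d-1$ are correct) but not something the paper spells out.
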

\begin{proof}
The expression on the right hand side follows from Gauss' summation identity (see, e.g. 15.1.20 in \cite{AS}).
\end{proof}

\begin{remark}\label{rem:alt-proof}
In addition to the proof of Lemma \ref{lm:two-coeff-expansion} presented above we would like to sketch a different argument, in the spirit of Lemma 1 from \cite{BC}, which bypasses the explicit use of the hypergeometric function. Let $\Psi(x) = |x|^{-p}$. Since on the unit sphere $\xi \in S^{d-1}$ is the outer-normal, by the divergence theorem (for the usual Lebesgue \emph{non}normalised surface integral),
\begin{align*}
\frac{\dd}{\dd t}\int_{S^{d-1}}|e_1 + \sqrt{t}\xi|^{-p} \dd \xi &= \frac{1}{2\sqrt{t}}\int_{S^{d-1}} \scal{(\nabla \Psi)(e_1 + \sqrt{t}\xi)}{\xi} \dd \xi \\
&=\frac{1}{2\sqrt{t}}\int_{\mathbb{B}^d} \text{div}_x\Big((\nabla \Psi)(e_1 + \sqrt{t}x)\Big) \dd x \\
&= \frac{1}{2}\int_{\mathbb{B}^d} (\Delta\Psi)(e_1 + \sqrt{t}x) \dd x
\end{align*}
for every $0 < t < 1$ (note that $e_1 + \sqrt{t}x$ on $B_2^d$ is away from the origin where $\Psi$ is singular). Computing the Laplacian yields the identity
\[
\frac{\dd}{\dd t}\int_{S^{d-1}}|e_1 + \sqrt{t}\xi|^{-p} \dd \xi = \frac{p(p-d+2)}{2}\int_{B_2^d} |e_1 + \sqrt{t}x|^{-p-2} \dd x.
\]
Writing the last integral using polar coordinates allows to compute the higher derivatives by simply iterating this identity.
Thus
\begin{align}\label{eq:two-coeff-der}
\frac{\dd}{\dd t} \E|e_1 + \sqrt{t}\xi|^{-p} &= \frac{p(p-d+2)}{2}\frac{1}{|S^{d-1}|}\int_{B_2^d}|e_1+\sqrt{t}x|^{-p-2}\dd x \\\notag
&= \frac{p(p-d+2)}{2}\frac{1}{|S^{d-1}|}\int_0^1\int_{S^{d-1}}r^{d-1}|e_1+\sqrt{tr^2}\xi|^{-p-2}\dd \xi
\end{align}
and
\begin{align*}
\frac{\dd^2}{\dd t^2} \E|e_1 + \sqrt{t}\xi|^{-p} = &\frac{p(p-d+2)}{2}\frac{(p+2)(p-d+4)}{2}\\
&\cdot\frac{1}{|S^{d-1}|}\int_0^1r^{d+1}\int_{\mathbb{B}^d}|e_1+\sqrt{t}rx|^{-p-4}\dd x\dd r,
\end{align*}
etc. It then remains to evaluate these derivatives at $t = 0$ to get the  power-series expansion coefficients.
\end{remark}

\begin{corollary}\label{cor:two-coeff-R4}
Let $\xi$ be a random vector uniform on the unit Euclidean sphere $S^{3}$ in $\R^4$. Let $0 < p \leq 2$. Then
\[
\E|e_1 + \sqrt{t}\xi|^{-p} \leq 1 - \frac{p(2-p)}{8}t - \frac{p^2(4-p^2)}{192}t^2, \qquad 0 < t < 1.
\]
\end{corollary}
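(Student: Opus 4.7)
The plan is to read Corollary \ref{cor:two-coeff-R4} directly off the power-series expansion furnished by Lemma \ref{lm:two-coeff-expansion} applied with $d=4$. That lemma gives, for $0<t<1$,
\[
\E|e_1+\sqrt{t}\xi|^{-p} = {}_2F_1\!\left(\tfrac{p}{2},\tfrac{p-2}{2};2;t\right) = \sum_{k=0}^{\infty} a_k(p)\, t^k, \qquad a_k(p) = \frac{\left(\tfrac{p}{2}\right)_k\left(\tfrac{p-2}{2}\right)_k}{(2)_k\, k!}.
\]
First I would just compute $a_0,a_1,a_2$ directly from the Pochhammer definitions and verify that they match the three terms on the right-hand side of the corollary: $a_0 = 1$, $a_1 = \tfrac{(p/2)(p-2)/2}{2} = -\tfrac{p(2-p)}{8}$, and $a_2 = \tfrac{(p/2)(p/2+1)\cdot ((p-2)/2)(p/2)}{6\cdot 2} = -\tfrac{p^2(4-p^2)}{192}$. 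These are precisely the coefficients appearing in the claimed bound, so the result reduces to showing $\sum_{k\geq 3} a_k(p) t^k \leq 0$ for all $0<t<1$ and $0<p\leq 2$.

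For that tail estimate, the idea is to check that in fact $a_k(p)\leq 0$ for \emph{every} $k\geq 1$. This is pure sign-chasing in the Pochhammer symbol $\left(\tfrac{p-2}{2}\right)_k = \prod_{j=0}^{k-1}\left(\tfrac{p-2}{2}+j\right)$. For $p\in(0,2)$, the first factor ($j=0$) satisfies $\tfrac{p-2}{2}\in(-1,0)$ and is negative, while all subsequent factors $\tfrac{p-2}{2}+j = \tfrac{p}{2}+(j-1)$ for $j\geq 1$ are strictly positive (as $p>0$). Hence $\left(\tfrac{p-2}{2}\right)_k < 0$ for all $k\geq 1$. Combined with $\left(\tfrac{p}{2}\right)_k>0$ and $(2)_k\,k!>0$, this gives $a_k(p)<0$. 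At the boundary $p=2$, the factor $\tfrac{p-2}{2}=0$ forces $a_k(2)=0$ for $k\geq 1$, and both sides of the claim equal $1$ (and the inequality is even an equality).

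Since $t\in(0,1)$ and every coefficient with $k\geq 3$ is non-positive, truncating the series after $k=2$ only increases its value, which gives the stated inequality. I do not foresee a real obstacle here: the only mildly delicate point is confirming that $\left(\tfrac{p-2}{2}\right)_k$ has exactly one negative factor for $p\in(0,2)$ and $k\geq 1$, which the explicit product above settles. As a sanity check one may also verify the endpoint $p=2$ separately, where the identity ${}_2F_1(1,0;2;t)=1$ makes both sides equal to $1$ for all $t\in(0,1)$.
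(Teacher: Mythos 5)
Your proposal is correct and is essentially the paper's own argument: apply Lemma \ref{lm:two-coeff-expansion} with $d=4$, observe that for $0<p\leq 2$ every coefficient beyond the constant term is non-positive (the single negative factor $\tfrac{p-2}{2}$ in the Pochhammer symbol), and drop the terms with $k\geq 3$. Your explicit computation of $a_1,a_2$ and the sign-chasing just spell out what the paper states in one line.
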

\begin{proof}
When $d=4$ and $0 < p < 2$, all the terms in the power series from Lemma~\ref{lm:two-coeff-expansion} but the first one (which equals $1$) are negative. Dropping all but the first three thus gives the desired bound.
\end{proof}

\begin{corollary}\label{cor:two-coeff-min}
Let $d \geq 1$. Let $\xi$ be a random vector uniform on the unit Euclidean sphere $S^{d-1}$ in $\R^d$. Let $0 < p \leq d-2$. Then for every vector $v$ in $\R^d$ and $a > 0$, we have
\[
\E|v + a\xi|^{-p} \leq \min\{|v|^{-p},a^{-p}\}.
\]
\end{corollary}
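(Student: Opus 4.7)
The plan is to reduce the two-sided inequality to a single univariate bound by exploiting a hidden symmetry, and then to establish that bound via a superharmonicity (mean-value) argument.

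First I would use rotational invariance of $\xi$ to assume $v = |v|e_1$, and then observe the symmetry
\[
|\alpha e_1 + \beta\xi|^2 = \alpha^2 + 2\alpha\beta\scal{e_1}{\xi} + \beta^2,
\]
which is symmetric in $\alpha\leftrightarrow\beta$. This means $|ae_1 + |v|\xi|$ and $||v|e_1 + a\xi|$ have the same distribution, so
\[
\E|v + a\xi|^{-p} = \E||v|e_1 + a\xi|^{-p} = \E|ae_1 + |v|\xi|^{-p}.
\]
Factoring out the larger of $a$ and $|v|$ in the appropriate representation, both target bounds $\leq a^{-p}$ and $\leq |v|^{-p}$ collapse to the single statement
\[
u(\rho) := \E|\rho e_1 + \xi|^{-p} \leq 1, \qquad 0 \leq \rho \leq 1.
\]

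To prove this, I would set $U(y) = \E|y + \xi|^{-p}$ on the open ball $\{|y|<1\}$. Since $|y + \xi| \geq 1 - |y| > 0$ there, the integrand is smooth in $y$ and differentiation under the integral yields
\[
\Delta U(y) = -p(d-2-p)\,\E|y+\xi|^{-p-2},
\]
which is $\leq 0$ exactly because $0 < p \leq d-2$. Thus $U$ is superharmonic. By rotational invariance $U(y)=u(|y|)$, and superharmonicity translates into $(\rho^{d-1}u'(\rho))' \leq 0$; together with $u'(0)=0$ (by smoothness and symmetry at the origin) this forces $\rho^{d-1}u'(\rho)\leq 0$, so $u$ is non-increasing on $[0,1)$. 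Hence $u(\rho) \leq u(0) = \E|\xi|^{-p} = 1$.

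The boundary value $\rho=1$ (needed only in the equality case $|v|=a$) follows from Fatou: $|\rho e_1 + \xi|^{-p} \to |e_1+\xi|^{-p}$ pointwise a.s.\ as $\rho \to 1^-$, and the limit is integrable since $p < d-1$, so $u(1) \leq \liminf_{\rho \to 1^-} u(\rho) \leq 1$. The proof presents no real obstacle; the only mild subtlety is recognizing that the symmetry between the scalar coefficient $a$ and the modulus $|v|$ lets one prove both bounds simultaneously, and that the hypothesis $p \leq d-2$ is precisely the sharp threshold making $|x|^{-p}$ (hence $U$) superharmonic.
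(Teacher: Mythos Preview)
Your proof is correct and follows essentially the same route as the paper: both reduce via the $a\leftrightarrow |v|$ symmetry to showing that $\rho \mapsto \E|\rho e_1 + \xi|^{-p}$ is nonincreasing on $[0,1]$, and both deduce this from the superharmonicity of $|x|^{-p}$ when $0<p\leq d-2$. The only cosmetic difference is that the paper quotes its pre-derived derivative formula~\eqref{eq:two-coeff-der} (obtained through the divergence theorem in Remark~\ref{rem:alt-proof}), whereas you compute $\Delta_y\,\E|y+\xi|^{-p}$ directly under the integral sign; these are equivalent packagings of the same observation.
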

\begin{proof}
By homogeneity and rotational invariance, we can assume without loss of generality that $v = e_1$ and $0 < a < 1$ {\red (note in particular that rotational invariance implies $\E|e_1+a\xi|^{-p} = \E|\xi_1+a\xi_2|^{-p} = \E|ae_1+\xi|^{-p}$, so the case $a > 1$ reduces to the case $0 < a < 1$ by multiplying both sides by $a^p$)}. From \eqref{eq:two-coeff-der} we see that the function $a \mapsto \E|e_1 + a\xi|^{-p}$ is nonincreasing, in particular $\E|e_1 + a\xi|^{-p} \leq 1$.
\end{proof}

\subsection{Bounds for the inductive base}

We remark that in several places we need to use numerical values of some special functions such as $\jj_1$, $\Gamma$, $\psi = (\log \Gamma)'$ and will implicitly do so (to the required precision).

Based on tables left by Gauss, Deming and Colcord in \cite{DC} found the value of $\min_{x>0} \Gamma(x)$ correct up to the 19th decimal which we record here (although we will not require such precision). 

\begin{lemma}[\cite{DC}]\label{lm:min-gamma}
We have, 
\[
\min_{x>0} \Gamma(x) = 0.8856031944108886887..,
\] 
uniquely occurring at $x_0 = 1.46163214496836226..$.
\end{lemma}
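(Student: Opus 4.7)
The claim is purely numerical, so my plan splits into a qualitative step (existence and uniqueness of a positive minimizer) and a quantitative step (pinning down its location and value to the stated precision).

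For the qualitative step, I would invoke the classical series
\[
\psi'(x) = \sum_{n=0}^\infty \frac{1}{(x+n)^2}, \qquad x>0,
\]
which is positive term-by-term. Hence $\log\Gamma$ is strictly convex on $(0,\infty)$, and therefore so is $\Gamma = \exp(\log\Gamma)$. Since $\Gamma(1)=\Gamma(2)=1$ while $\Gamma(x)\to\infty$ as $x\downarrow 0$ and as $x\to\infty$, there is a unique global minimizer $x_0 \in (1,2)$, characterized by the equation $\psi(x_0)=0$.

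For the quantitative step, I would evaluate $\psi$ via the standard representation
\[
\psi(x) = -\gamma + \sum_{n=0}^\infty \left(\frac{1}{n+1} - \frac{1}{n+x}\right),
\]
using the recurrence $\psi(x+1)=\psi(x)+1/x$ to shift $x$ into a region (say $x\geq 5$) where the asymptotic expansion of $\psi$ has small, easily controlled truncation error. Bisection on $(1,2)$, or one or two Newton iterations starting from $x=3/2$ (monotone convergence is guaranteed by $\psi'>0$), then locates $x_0$ to any desired precision. Feeding $x_0$ into the analogous series / asymptotic computation for $\log\Gamma$ yields $\Gamma(x_0)$. The 19-digit value is quoted directly from the tables of \cite{DC}; for the subsequent use only a handful of digits will be needed.

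The only possible obstacle is bookkeeping the truncation error in the series for $\psi$ and $\log\Gamma$, but the trigamma tail is controlled by $1/N$, so both bounds are straightforward. No analytical subtlety is involved — the lemma is effectively a citation of a well-known numerical fact, and the proof amounts to routine verification, which is why the paper simply records the value rather than deriving it in detail.
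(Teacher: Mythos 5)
Your proposal is correct and matches the paper, which offers no derivation at all: the lemma is simply a citation of the tabulated value from \cite{DC}, exactly as you note at the end. Your convexity-of-$\log\Gamma$ argument for existence and uniqueness of the minimizer in $(1,2)$, followed by routine numerical root-finding for $\psi(x_0)=0$, is the standard verification and raises no issues.
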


To check the base of the induction from Step 2 in Section \ref{sec:beginning-proof-p<2}, we will need the following two-point inequality.

\begin{lemma}\label{lm:ind-base}
For every $\frac18 \leq q \leq 1$ and $0 \leq t \leq 1$, we have
\[
1 - \frac{q(1-q)}{2}t - \frac{q^2(1-q^2)}{12}t^2 \leq \Gamma(2-q)\left(2 - \left(\frac{3-t}{2}\right)^{-q}\right).
\]
\end{lemma}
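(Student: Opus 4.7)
Set $\Phi(t, q) := \Gamma(2-q)\bigl(2 - \bigl(\frac{3-t}{2}\bigr)^{-q}\bigr) - 1 + \frac{q(1-q)}{2}t + \frac{q^2(1-q^2)}{12}t^2$, so that the lemma asks for $\Phi \geq 0$ on $[0,1]\times[1/8,1]$.

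The central observation I would exploit is that $\Phi(\cdot, q)$ is \emph{concave} in $t$ on $[0,1]$ for every $q \in [0,1]$. Indeed a direct computation gives
\[
\partial_t^2 \Phi(t,q) \;=\; \frac{q^2(1-q^2)}{6} \;-\; \frac{q(q+1)\Gamma(2-q)}{4}\Bigl(\frac{3-t}{2}\Bigr)^{-q-2},
\]
and since $((3-t)/2)^{-q-2}$ attains its minimum on $[0,1]$ at $t=0$, concavity of $\Phi(\cdot,q)$ boils down to
\[
\Gamma(2-q) \;\geq\; q(1-q)\bigl(\tfrac{3}{2}\bigr)^{q+1}, \qquad q \in [0,1],
\]
which is immediate from Lemma~\ref{lm:min-gamma} (giving $\Gamma > 0.885$ everywhere on $\R_+$) together with the crude bound $q(1-q)(3/2)^{q+1} \leq \tfrac{1}{4}\cdot(3/2)^2 = 9/16$.

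Concavity in $t$ shrinks the task to the two endpoint inequalities: (a) $\Phi(0, q) = \Gamma(2-q)\bigl(2 - (3/2)^{-q}\bigr) - 1 \geq 0$ on $[1/8, 1]$, and (b) $\Phi(1, q) = \Gamma(2-q) - 1 + \frac{q(1-q)}{2} + \frac{q^2(1-q^2)}{12} \geq 0$ on $[0, 1]$. Statement (b) is the milder of the two: both endpoint values $\Phi(1,0)$ and $\Phi(1,1)$ vanish, and by computing $\partial_q \Phi(1,q)$ explicitly (using $\Gamma'(2-q) = -\psi(2-q)\Gamma(2-q)$) and exploiting log-convexity of $\Gamma$, one can show that $\Phi(1,\cdot)$ is unimodal on $[0,1]$ with a single interior maximum, hence non-negative.

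Statement (a) is the heart of the proof and the main obstacle, because the margin is only about $10^{-3}$ near $q = 1/8$ (numerically $\Phi(0, 1/8) \approx 0.0013$), so generic convexity estimates for $\Gamma$ will not suffice. My plan is two-step: (i) verify $\Phi(0, 1/8) \geq 0$ numerically, using for instance the overestimate $(3/2)^{-1/8} = e^{-\frac{1}{8}\ln(3/2)} \leq 1 - \frac{1}{8}\ln\tfrac{3}{2} + \frac{1}{128}\bigl(\ln\tfrac{3}{2}\bigr)^2$ from the alternating Maclaurin series together with a sharp numerical lower bound for $\Gamma(15/8)$; and (ii) show that $q \mapsto \Phi(0, q)$ is non-decreasing on $[1/8, 1]$ by verifying that the bracket in
\[
\partial_q\Phi(0, q) \;=\; \Gamma(2-q)\Bigl[\,\ln\tfrac{3}{2}\cdot \bigl(\tfrac{3}{2}\bigr)^{-q} \;-\; \psi(2-q)\bigl(2 - \bigl(\tfrac{3}{2}\bigr)^{-q}\bigr)\Bigr]
\]
is non-negative on $[1/8, 1]$ (it changes sign from $-$ to $+$ at some critical $q^\ast \in (0, 1/16)$, which must be located explicitly using the explicit values of $\psi(2)$ and $\psi'(2)$). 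Combining (i) and (ii) gives (a), and the lemma follows. The sharp quantitative control of $\Gamma$ and $\psi$ near $q = 1/8$ that steps (i)--(ii) demand is what makes this the delicate portion of the argument.
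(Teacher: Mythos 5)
Your reduction is exactly the paper's: the same second derivative in $t$, the same observation that it is largest at $t=0$, and the same crude bound $\Gamma(2-q)\geq 0.88 > \tfrac14(3/2)^2$ via the minimum of the Gamma function, so concavity in $t$ reduces the lemma to the endpoints $t=0$ and $t=1$. Where you diverge is in closing the two endpoint inequalities, and there your steps are sketches rather than proofs. For $t=1$ the paper takes logarithms, uses $\log(1-x)\leq -x$, and shows that $\log\Gamma(2-q)+\frac{q(1-q)}{2}+\frac{q^2(1-q^2)}{12}$ is concave on $[0,1]$ (bounding $\sum_k(2-q+k)^{-2}$ by chords), which gives nonnegativity at once from the vanishing endpoints; your route via unimodality of $\Phi(1,\cdot)$ is numerically plausible, but note that $\partial_q^2\Phi(1,q)$ changes sign on $[0,1]$ (it is about $-0.01$ at $q=0$ and about $+0.15$ at $q=1$), so unimodality does not follow from concavity and the appeal to ``log-convexity of $\Gamma$'' would have to be made precise. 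For $t=0$ the paper again takes logs and compares two convex functions by a tangent line on $[\tfrac18,0.35]$ plus a crude bound for $q\geq 0.35$; your alternative, a sharp numerical check at $q=\tfrac18$ (the margin is indeed about $10^{-3}$) together with monotonicity of $\Phi(0,\cdot)$ on $[\tfrac18,1]$, is viable --- the bracket you display is roughly $+0.028$ at $q=\tfrac18$ and stays positive --- but that positivity is itself a $\psi$-versus-exponential inequality of the same delicacy as the paper's tangent-line checks, so it must actually be carried out, not just asserted (and the precise location of the sign change near $q\approx 0.05$ is irrelevant as long as positivity on $[\tfrac18,1]$ is established). In short: same skeleton and correct in outline, with the endpoint verifications taking a slightly different route whose quantitative claims still need estimates comparable to those the paper supplies.
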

\begin{proof}
We let $Q_q(t), R_q(t)$ denote the left hand side and the right hand side respectively and set $h_q(t) = R_q(t) - Q_q(t)$. We examine its second derivative,
\[
h_q''(t) = -2^q\Gamma(2-q)q(q+1)(3-t)^{-q-2} + \frac{q^2(1-q^2)}{6}
\]
which is clearly decreasing in $t$. Therefore, for all $0 \leq t \leq 1$, $h_q''(t) \leq h_q''(0)$ and for $0 < q < 1$, with the aid of Lemma \ref{lm:min-gamma},
\begin{align*}
-\frac{3^{q+2}}{2^q\cdot q(1+q)} h_q''(0) &= \Gamma(2-q) - (3/2)^{q+1}q(1-q) \\
&> 0.88 - (3/2)^2\cdot \frac{1}{4} = 0.3175.
\end{align*}
As a result, $h_q(t)$ is concave on $[0,1]$. To show that $h_q(t) \geq 0$ on $[0,1]$, it thus suffices to verify that (A) $h_q(0) \geq 0$ and (B) $h_q(1) \geq 0$, for all $\frac18 \leq q \leq 1$.

(A): $h_q(0) \geq 0$ is equivalent to $\Gamma(2-q)\left(2-(2/3)^q\right) \geq 1$, or after taking logarithms, $g(q) \geq f(q)$ with $g(q) = \log\Gamma(2-q)$, $f(q) = -\log 2 - \log(1-\frac12(\frac23)^q)$. Both $f$ and $g$ are clearly convex (note $f(q) = -\log 2 + \sum_{k=1}^\infty [\frac12(\frac23)^q]^k/k$). For $\frac18 \leq q \leq 0.35$, we lower-bound $g$ by its supporting tangent at $q = \frac18$, $g(q) \geq \ell(q) = g(\frac18) + g'(\frac18)(q-\frac18)$. Since $\ell(\frac18) - f(\frac18) > 0.0005$ and $\ell(0.35) - f(0.35) > 0.0003$, thanks to the convexity of $f$, we conclude that indeed $g(q) > f(q)$ for $\frac18 \leq q \leq 0.35$. For the remaining range $0.35 \leq q \leq 1$, we crudely have, using the monotonicity of $f$ and Lemma \ref{lm:min-gamma},
\[
f(q) \leq f(0.35) < -0.124 < \log(0.885) <  \log \Gamma(2-q) = g(q).
\]

(B): $h_q(1) \geq 0$ is equivalent to $\Gamma(2-q) \geq 1 - \frac{q(1-q)}{2} - \frac{q^2(1-q^2)}{12}$. Taking the logarithms and using $\log(1-x) \leq -x$, $x < 1$, it suffices to show that
\[
f(q) = \log\Gamma(2-q) +  \frac{q(1-q)}{2} + \frac{q^2(1-q^2)}{12}
\]
is nonnegative. This in fact holds for all $0 \leq q \leq 1$. Indeed, $f(0) = f(1) = 0$ and for $0 \leq q \leq 1$,
\[
f''(q) = \sum_{k=0}^\infty \frac{1}{(2-q+k)^2} -q^2 - \frac{5}{6}.
\]
{\red It suffices to show that this is negative for $0 \leq q \leq 1$ so that the concavity of $f$ will finish the argument. To this end, we upper bound the convex function $h(q) = \sum_{k=0}^\infty \frac{1}{(2-q+k)^2}$ by linear chords. For $0 \leq q \leq \frac12$, we have, $h(q) \leq h_1(q) = \frac{\frac12-q}{\frac12}h(0) + \frac{q}{\frac12}h(\frac12)$ and since $h(0) = \frac{\pi^2}{6}-1$, $h(\frac12) = \frac{\pi^2}{2}-4$, we get $h_1(q) = \frac{2}{3}(\pi^2-9)q+\frac{\pi^2}{6}-1$. We check that $h_1(q)-q^2-\frac56$ is maximised at $q = \frac{\pi^2-9}{3}$ with the value less than $-0.1$. For $\frac{1}{2} \leq q \leq 1$, we have $h(q) \leq h_2(q) = \frac{1-q}{\frac12}h(\frac12) + \frac{q-\frac12}{\frac12}h(1)$ and since $h(1) = \frac{\pi^2}{6}$, we get $h_2(q) = 2(\frac{12-\pi^2}{3})q+\frac{5\pi^2}{6}-8$. Finally, we check that $h_2(q)-q^2-\frac{5}{6}$ is maximised at $q = \frac{12-\pi^2}{3}$ with the value also less than $-0.1$.}
\end{proof}

We emphasise that in part (B) of this proof, we have shown that when $t=1$, the inequality in Lemma \ref{lm:ind-base} holds for all $0 \leq q \leq 1$. This combined with Corollary \ref{cor:two-coeff-R4} leads to the following result, important in the sequel in the proof of integral inequality \eqref{eq:perfect}.

\begin{corollary}\label{cor:ind-base-t=1}
Let $\xi$ be a random vector uniform on the unit Euclidean sphere $S^{3}$ in $\R^4$. Let $0 < p \leq 2$. Then
\[
\E|e_1 + \xi|^{-p} \leq \Gamma\left(2 - \frac{p}{2}\right),
\]
equivalently
\begin{equation}\label{eq:Hs=2}
\int_0^\infty |\jj_1(t)|^2 t^{p-1}\dd t \leq 2^{p-1}\Gamma(p/2).
\end{equation}
\end{corollary}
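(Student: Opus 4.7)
The plan is to combine the two preceding results essentially without further work, once we observe the equivalence of the two displayed inequalities.

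First, I would verify the equivalence between the pointwise bound $\E|e_1 + \xi|^{-p} \leq \Gamma(2-p/2)$ and the integral inequality \eqref{eq:Hs=2}. By rotational invariance, $\E|e_1 + \xi|^{-p} = \E|\xi_1 + \xi_2|^{-p}$, where $\xi_1,\xi_2$ are independent uniform on $S^3$. Applying Corollary \ref{cor:neg-mom-rot} with $n=2$, $d=4$, and $X_1=\xi_1$, $X_2=\xi_2$ (both rotationally invariant unit vectors), I get
\[
\E|\xi_1+\xi_2|^{-p} = \kappa_{p,4}\int_0^\infty \jj_1(t)^2\, t^{p-1}\,\dd t,
\qquad \kappa_{p,4} = 2^{1-p}\frac{\Gamma(2-p/2)}{\Gamma(p/2)}.
\]
Substituting this formula and dividing by $\Gamma(2-p/2)/\Gamma(p/2) \cdot 2^{1-p}$ shows the two forms of the inequality are identical.

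Next, I would upgrade Corollary \ref{cor:two-coeff-R4} to the endpoint $t = 1$. By Lemma \ref{lm:two-coeff-expansion} with $d=4$, the function $t \mapsto \E|e_1+\sqrt{t}\xi|^{-p}$ is given by a power series in $t$ whose terms (apart from the constant $1$) are all nonpositive for $0 < p < 2$; hence each partial sum dominates the function, and we may let $t \to 1^-$ to obtain
\[
\E|e_1+\xi|^{-p} \leq 1 - \frac{p(2-p)}{8} - \frac{p^2(4-p^2)}{192}.
\]

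Finally, I would invoke part (B) of the proof of Lemma \ref{lm:ind-base} (where the displayed pointwise inequality was actually established for all $0 \leq q \leq 1$ and $t = 1$, not merely for $q \geq 1/8$). Setting $q = p/2$ with $0 < p \leq 2$, note that the right-hand side of Lemma \ref{lm:ind-base} at $t=1$ collapses to $\Gamma(2-q)(2-1)=\Gamma(2-p/2)$, while the left-hand side, after the substitution $q = p/2$, becomes $1 - \frac{p(2-p)}{8} - \frac{p^2(4-p^2)}{192}$. Chaining this with the previous display yields $\E|e_1+\xi|^{-p} \leq \Gamma(2-p/2)$, which is the desired bound. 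There is no real obstacle here; the only thing to be careful about is the passage to $t=1$, which the sign pattern in the hypergeometric series makes routine.
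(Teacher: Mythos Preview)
Your proposal is correct and follows essentially the same approach as the paper: combine the truncated hypergeometric bound from Corollary~\ref{cor:two-coeff-R4} (extended to $t=1$) with part~(B) of the proof of Lemma~\ref{lm:ind-base} at $q=p/2$, and verify the equivalence via rotational invariance and Corollary~\ref{cor:neg-mom-rot}. You are in fact slightly more careful than the paper in justifying the passage to $t=1$, which the paper leaves implicit.
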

\begin{proof}
To explain the equivalent form involving $\jj_1$, note that, $\E|e_1 + \xi|^{-p} = \E|\xi + \xi'|^{-p}$, for an independent copy $\xi'$ of $\xi$, thanks to rotational invariance. It remains to use \eqref{eq:neg-mom-rot} which gives $\E|\xi_1+\xi_2|^{-p} = \kappa_{p,4}\int_0^\infty |\jj_1(t)|^2 t^{p-1}\dd t$ and plug in the value of $\kappa_{p,4}$.
\end{proof}

\subsection{The integral inequality: $0 < p \leq 2$}

We record for future use the following bounds
\begin{align}\label{eq:j1-exp}
|\jj_1(t)| &\leq \exp\left(-\frac{t^2}{8}-\frac{t^4}{3\cdot 2^7}\right), \qquad 0 \leq t \leq 4, \\\label{eq:j1-w1}
|\jj_1(t)| &\leq (8/\pi)^{1/2}t^{-1}(t^2-1)^{-1/4}, \qquad t \geq 1,
\end{align}
where the first one appears as Lemma 3.1 in \cite{OP} (see also \cite[Lemma 3.6]{Brz} for the proof of a more general statement) and the second one can be found in Watson's treatise (see \cite[p.447]{W} as well as \cite[Lemma 4.4]{Dirk-cyl}), which in particular gives
\begin{equation}\label{eq:j1-w2}
|\jj_1(t)| \leq (8/\pi)^{1/2}\left(\frac{t_0^2}{t_0^2-1}\right)^{1/4}t^{-3/2}, \qquad t \geq t_0 \geq 1.
\end{equation}

We define
\begin{equation}\label{eq:defH}
H(p, s) = \int_0^\infty \Big(e^{-st^2/8}-|\jj_1(t)|^s\Big)t^{p-1} \dd t, \qquad 0 < p < 2,\  s > 1
\end{equation}
and immediately observe that after a change of variables one integral can be expressed in terms of the gamma function,
\begin{equation}\label{eq:defG}
G(p,s) = \int_0^\infty e^{-st^2/8}t^{p-1} \dd t = s^{-p/2}2^{3p/2-1}\Gamma(p/2).
\end{equation}
Recall \eqref{eq:defF}, $F(p,s) = \int_0^\infty |\jj_1(t)|^st^{p-1}\dd t$, so
\begin{equation}\label{eq:HasFG}
H(p,s) = G(p,s) - F(p,s).
\end{equation}
Then the crucial integral inequality \eqref{eq:perfect} is equivalent to $H(p,s) \geq 0$.

Our main goal and result here is that the integral inequality $H(p,s) > 0$ holds in rather wide ranges of parameters $(p,s)$ (however, it does not {\red hold for all} $0 < p < 2$ and $s > 1$ which, as already noted, would have been enough to deduce Theorem \ref{thm:OPneg-p<2}).

\begin{lemma}\label{lm:H}
The inequality $H(p,s) > 0$ holds in the following cases

(a) $0 < p \leq 2$ and $s \geq 2$,

(b) $0 < p \leq \frac{1}{4}$ and $s \geq 1.3$.
\end{lemma}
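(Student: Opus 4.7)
The starting point is the bound \eqref{eq:j1-exp}: raising it to power $s$ gives
\[
|\jj_1(t)|^s \leq e^{-st^2/8}\,e^{-st^4/(3\cdot 2^7)},\qquad 0\leq t\leq 4,
\]
so the integrand of $H(p,s)$ is pointwise \emph{nonnegative} on $[0,4]$ and in fact strictly positive, providing a ``body gain.'' The price is paid on $[4,\infty)$, where the Gaussian $e^{-st^2/8}$ becomes negligible while $|\jj_1|^s$ decays only polynomially. I would split $H(p,s) = I_{\mathrm{body}} + I_{\mathrm{tail}}$ at $t=4$ and show that the body dominates the tail throughout the two rectangles $(0,2]\times[2,\infty)$ and $(0,1/4]\times[1.3,\infty)$. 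As a sanity check, Corollary \ref{cor:ind-base-t=1} is exactly the identity $H(p,2) \geq 0$ for $0 < p \leq 2$, so the boundary $s=2$ in (a) is already handled.

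\textbf{Body and tail estimates.} For the body, the elementary inequality $1-e^{-x}\geq x/(1+x)$ yields
\[
I_{\mathrm{body}} \geq \int_0^4 e^{-st^2/8}\cdot\frac{st^4/(3\cdot 2^7)}{1 + st^4/(3\cdot 2^7)}\,t^{p-1}\,dt,
\]
whose right-hand side is evaluable in closed form after a change of variables; the $t^4$ vanishing of the inner factor tames the weight $t^{p-1}$ even for $p$ as small as $1/4$, and in fact the small-$p$ singularity \emph{amplifies} the gain near the origin. For the tail, Watson's bound \eqref{eq:j1-w2} with $t_0 = 4$ gives
\[
\int_4^\infty |\jj_1(t)|^s t^{p-1}\,dt \leq \left(\frac{8}{\pi}\right)^{s/2}\!\left(\frac{16}{15}\right)^{s/4}\frac{4^{p-3s/2}}{3s/2-p},
\]
which together with the explicit (tiny) Gaussian tail reduces $H(p,s) > 0$ to a concrete two-variable numerical inequality in $(p,s)$.

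\textbf{Main obstacle.} Part (a) should be comparatively benign: when $s \geq 2$, the Gaussian tail obeys $e^{-st^2/8}\big|_{t\geq 4}\leq e^{-2s}$ and is negligible, while both the body gain and the Watson tail depend on $s$ in the ``right'' monotonic direction, so the estimate degrades smoothly from the already-known base case $s=2$; it suffices to verify the extremal corners. Part (b) is the delicate regime — when $s$ is close to $1.3$, Watson gives only $\sim t^{-1.95}$ decay and the corresponding tail at $T=4$ is no longer cleanly dominated by the crude body gain. I expect the fix to require (i) pushing the split point past $4$, combining \eqref{eq:j1-exp} on $[0,4]$ with an intermediate estimate on $[4,T']$ coming either from Watson's sharper bound \eqref{eq:j1-w1} or from a tailored estimate of $\jj_1$ on a finite window, and (ii) exploiting that for $p\leq 1/4$ the Gaussian normalisation $G(p,s) = s^{-p/2}2^{3p/2-1}\Gamma(p/2) \sim 2/p$ is \emph{large}, absorbing small losses. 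The hard part is thus the quantitative numerics at the corner $(p,s)=(1/4,1.3)$, where all four estimates (body gain, Gaussian tail, Watson tail, and normalisation) must be essentially tight simultaneously.
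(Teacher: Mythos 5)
There is a genuine gap, and it sits precisely where you declare part (a) to be ``benign.'' At the corner $(p,s)=(2,2)$ one has $H(2,2)=0$ exactly: $F(2,2)=\int_0^\infty \jj_1(t)^2 t\,\dd t = 2 = G(2,2)$, which is the equality case \eqref{eq:Hs=2} at $p=2$ and reflects the phase transition $C_2(2)=C_\infty(2)$. Consequently, on $[0,4]$ your lower bound for the body gain (obtained from \eqref{eq:j1-exp}, which is a strict overestimate of $|\jj_1|$) is strictly smaller than the true gain, while the Watson bound \eqref{eq:j1-w2} strictly overestimates the true tail loss; since at $(2,2)$ the true gain and true loss balance exactly, your sufficient condition ``body gain $\geq$ Watson tail'' is \emph{false} at $(2,2)$ and hence in a whole neighbourhood of it. No choice of split point or corner-checking repairs this: any argument using only pointwise upper bounds on $|\jj_1|$ with fixed slack cannot prove an inequality that degenerates to equality inside the closed region. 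This is exactly why the paper's crude estimates (Lemmas \ref{lm:j-U} and \ref{lm:U-G}) only cover $p\leq\frac45$, $s\geq2$ and $p\leq2$, $s\geq\frac83$, and the remaining rectangle $\frac45<p\leq2$, $2\leq s\leq\frac83$ is closed by a different mechanism: H\"older's inequality in $s$, $F(p,s)\leq F(p,2)^{\frac{8-3s}{2}}F(p,8/3)^{\frac{3s-6}{2}}$, anchored at the \emph{sharp} endpoint $F(p,2)\leq G(p,2)$ — which is Corollary \ref{cor:ind-base-t=1}, proved by hypergeometric/Gamma-function means, not Bessel asymptotics — together with the numerically verified strict inequality $F(p,8/3)<e^{-p/6}G(p,2)$ (Lemma \ref{lm:interp>2}) and the elementary bound \eqref{eq:exp-ps}. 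Your proposal cites the $s=2$ case only as a ``sanity check'' and supplies no device for transferring it to $s$ slightly above $2$, which is the actual crux of (a).

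For part (b) your own caveats are apt but understated: with the split at $t=4$ and Watson's bound, the loss at $s=1.3$, $p$ small is roughly $0.06$ while the body gain from \eqref{eq:j1-exp} plus the Gaussian tail is roughly $0.05$, so the scheme fails at that corner too; the paper instead verifies $s\geq1.7$ by the crude bounds (Lemma \ref{lm:U-G}(i)) and handles $1.3\leq s\leq1.7$ again by H\"older interpolation, with the endpoint inequality $F(p,1.3)<e^{2p/17}G(p,1.7)$ (Lemma \ref{lm:interp<2}) established by fine Riemann-sum numerics on $(1,10)$ with the tail cut at $t_0=10$, not $4$. So the overall architecture you would need — a log-convexity/interpolation step in $s$ leveraging one sharp and one numerically certified endpoint — is missing from the proposal, and without it the plan as written cannot prove either range in full.
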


For the proof, we will need several rather intricate estimates on various integrals. The general idea we employ here follows \cite{OP} and is to first use the explicit bounds on $\jj_1$ from \eqref{eq:j1-exp} and \eqref{eq:j1-w2} to get $H > 0$ in certain but \emph{not all} cases and then extend them by interpolating in $s$ (exploiting the simple dependence of $G$ on $s$). This is in contrast to several works, e.g. \cite{Brz, CGT, CKT, Dirk-cyl, Ko, Mo} which heavily rely on the approach developed by Nazarov and Podkorytov in \cite{NP} to integral inequalities with oscillatory integrands. We also refer to recent papers  \cite{ALM} as well as \cite{MelRob} for connections between such integral inequalities and majorisation.

We begin by setting
\begin{equation}\label{eq:defU}
\begin{split}
U(p,s) &= \frac{4^p(2\pi\cdot15^{1/2})^{-s/2}}{3s/2-p} \\
&\qquad+ 2^{3p/2-1}s^{-p/2}\left(\Gamma\left(p/2\right) - \frac{\Gamma(p/2+2)}{6s} + \frac{\Gamma(p/2+4)}{72s^2}\right)
\end{split}
\end{equation}
which emerges in the next lemma (following Lemma 3.2 from \cite{OP}).
\begin{lemma}\label{lm:j-U}
For $p < 3s/2$, we have
\[
F(p,s) < U(p,s).
\]
\end{lemma}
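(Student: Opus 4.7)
The plan is to split the integral at $t=4$ and estimate the two pieces using the two bounds on $\jj_1$ that the paper has already recorded. This mirrors the strategy of Lemma 3.2 in \cite{OP} (which the authors explicitly point to) and the role of the constants $8/\pi$ and $16/15$ in the first term of $U(p,s)$ is a strong hint that the splitting point should be exactly $t_0=4$.

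\textbf{Tail piece.} On $[4,\infty)$ I would apply \eqref{eq:j1-w2} with $t_0=4$, yielding
\[
|\jj_1(t)|^s \leq (8/\pi)^{s/2}(16/15)^{s/4}\, t^{-3s/2}.
\]
Since $p < 3s/2$ by assumption, the integral $\int_4^\infty t^{p-1-3s/2}\dd t = \frac{4^{p-3s/2}}{3s/2 - p}$ converges, and a direct simplification of the prefactor $(8/\pi)^{s/2}(16/15)^{s/4}4^{-3s/2}$ collapses to $(2\pi\sqrt{15})^{-s/2}$, giving exactly the first summand of $U(p,s)$.

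\textbf{Body piece.} On $[0,4]$ I would use the Oleszkiewicz--Pełczyński bound \eqref{eq:j1-exp} to write
\[
|\jj_1(t)|^s \leq e^{-st^2/8}\cdot e^{-st^4/384}.
\]
To the second factor I would apply the elementary inequality $e^{-y}\leq 1-y+y^2/2$ (valid for every $y\geq 0$: the difference vanishes to first order at $0$ and has second derivative $e^{-y}-1\leq 0$). Crucially, the resulting quadratic $1-y+y^2/2$ has negative discriminant, hence is strictly positive on $\R$, so after multiplying by $e^{-st^2/8}t^{p-1}$ the bounding integrand is nonnegative and I can freely extend the range of integration from $[0,4]$ to $[0,\infty)$, only increasing the bound. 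Each of the three resulting integrals $\int_0^\infty e^{-st^2/8}\,t^{p-1+4k}\dd t$ with $k=0,1,2$ is a Gaussian moment, evaluated by \eqref{eq:defG} applied at shifted parameters, i.e.\ $G(p+4k,s) = s^{-p/2-2k}\,2^{3p/2+6k-1}\Gamma(p/2+2k)$. Collecting the prefactors $1$, $-s/384$, $s^2/(2\cdot 384^2)$ yields exactly the bracketed expression $s^{-p/2}2^{3p/2-1}\bigl(\Gamma(p/2)-\frac{\Gamma(p/2+2)}{6s}+\frac{\Gamma(p/2+4)}{72s^2}\bigr)$ appearing in $U(p,s)$.

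The main obstacle is purely bookkeeping: verifying that the powers of $2$ and $s$ arising from Taylor-expanding $e^{-st^4/384}$ combine with $G(p+4k,s)$ to produce precisely the rational coefficients $\frac{1}{6}$ and $\frac{1}{72}$ in the definition of $U$. There is no subtle inequality hiding; strictness of the final bound is immediate since both \eqref{eq:j1-exp} and the Taylor remainder bound are strict on a set of positive measure, and extending the range $[0,4]$ to $[0,\infty)$ adds a strictly positive contribution.
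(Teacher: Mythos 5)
Your proposal is correct and follows essentially the same route as the paper: split at $t_0=4$, use \eqref{eq:j1-exp} on the body and \eqref{eq:j1-w2} on the tail (giving the first summand of $U$), bound the quartic exponential factor by $1-y+y^2/2$, and extend the body integral to $[0,\infty)$ to land on the Gamma-function terms. The only cosmetic difference is that the paper first substitutes $u=st^2/8$ and then applies the quadratic bound to $e^{-u^2/(6s)}$, whereas you apply it directly in the $t$ variable and evaluate the three shifted Gaussian moments $G(p+4k,s)$ — the bookkeeping you flag does check out and produces the coefficients $\frac{1}{6s}$ and $\frac{1}{72s^2}$.
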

\begin{proof}
Using \eqref{eq:j1-exp} and \eqref{eq:j1-w2} with $t_0 = 4$, 
we get
\begin{align*}
F(p,s) = \int_0^\infty |\jj_1(t)|^st^{p-1} \dd t < &\int_0^\infty \exp\left(-s\frac{t^2}{8}-s\frac{t^4}{3\cdot 2^7}\right)t^{p-1} \dd t \\
&\qquad\qquad\qquad+ \left(\frac{8}{15^{1/4}(2\pi)^{1/2}}\right)^s\frac{4^{p-3s/2}}{3s/2-p},
\end{align*}
valid for $p < \frac{3s}{2}$. After the change of variables $u=st^2/8$, the first integral becomes
\[
2^{3p/2-1}s^{-p/2}\int_0^\infty e^{-\frac{u^2}{6s}}e^{-u}u^{p/2-1}\dd u.
\]
We estimate the first exponential using $e^{-x} \leq 1-x+\frac{x^2}{2}$, $x \geq 0$, which gives the bound
\[
\int_0^\infty \left(1 - \frac{u^2}{6s}+\frac{u^4}{72s^2}\right)e^{-u}u^{p/2-1}\dd u  = \Gamma\left(p/2\right) - \frac{\Gamma(p/2+2)}{6s} + \frac{\Gamma(p/2+4)}{72s^2}
\]
{\red on the integral appearing in the above expression}.
\end{proof}

\begin{lemma}\label{lm:U-G}
The inequality
\[
U(p,s) < G(p,s)
\]
holds in the following cases

(i) $0 < p \leq \frac{1}{4}$ and $s \geq \frac{17}{10}$,

(ii) $0 < p \leq \frac45$ and $s \geq 2$,

(iii) $0 \leq p \leq 2$ and $s \geq \frac{8}{3}$.
\end{lemma}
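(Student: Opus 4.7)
The plan is first to cancel the common factor $2^{3p/2-1}s^{-p/2}\Gamma(p/2)$ from both sides of $U(p,s)<G(p,s)$, and use the identity $\Gamma(p/2+4)=(p/2+2)(p/2+3)\Gamma(p/2+2)$ to rewrite the inequality in the equivalent form
$$\frac{4^p(2\pi\sqrt{15})^{-s/2}}{3s/2-p} < 2^{3p/2-1}s^{-p/2-1}\frac{\Gamma(p/2+2)}{6}\left(1-\frac{(p/2+2)(p/2+3)}{12s}\right).$$
Before anything else I would check that the correction factor on the right is positive: one sees that $(p/2+2)(p/2+3)/(12s)\leq 3/8$ in case (iii) (where $p\leq 2$, $s\geq 8/3$) and is even smaller in cases (i) and (ii), so the factor stays above $5/8$ throughout.

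Let $R(p,s)$ denote the ratio LHS/RHS of the reduced inequality. The central step is to prove $\partial_s\log R(p,s)<0$ on each of the three rectangles. A direct differentiation gives
$$\partial_s\log R=-\tfrac{1}{2}\log(2\pi\sqrt{15})-\frac{3/2}{3s/2-p}+\frac{p/2+1}{s}+\frac{(p/2+2)(p/2+3)}{12s^2-s(p/2+2)(p/2+3)},$$
and $\tfrac{1}{2}\log(2\pi\sqrt{15})\approx 1.596$ dominates the last two (positive) contributions in all three regimes; for instance in case (i) the last two terms together do not exceed about $0.95$. Once this monotonicity in $s$ is in hand, the problem collapses to verifying $R(p,s_0)<1$ at the boundary values $s_0\in\{17/10,\,2,\,8/3\}$ in cases (i), (ii), (iii) respectively.

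At each such boundary $s=s_0$ I would then analyse $R(\cdot,s_0)$ as a function of $p$ on $[0,p_{\max}]$. The cleanest route is to establish $\partial_p\log R(p,s_0)>0$ using the digamma bound to control $\partial_p\log\Gamma(p/2+2)$, reducing matters to a single numerical check at $p=p_{\max}$. If global monotonicity in $p$ turns out to fail, a fallback is to subdivide $[0,p_{\max}]$ into a few subintervals and bound $\log R(p,s_0)$ from above by piecewise affine interpolants using the convexity/concavity of its constituent pieces.

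The main obstacle will be tightness rather than conceptual difficulty: at $(p,s)=(2,8/3)$ a direct evaluation gives $G=3/2$ while $U\approx 1.495$, so the absolute gap is only about $5\cdot 10^{-3}$. Consequently, all numerical estimates (notably the value of $\log(2\pi\sqrt{15})$ used in the $s$-monotonicity step, together with the rational lower bounds on $\Gamma(p/2+2)$ employed in the $p$-analysis) have to be carried out with enough precision to survive this narrow margin, and one should expect the arithmetic in case (iii) at the endpoint to be the delicate part of the proof.
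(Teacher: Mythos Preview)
Your plan is sound and essentially matches the paper's argument: both rewrite $U<G$ as a one-line inequality, then reduce via monotonicity in $p$ and $s$ to a single numerical check at the corner $(p_k,s_k)$ in each of the three regimes. Two small points. First, your formula for $\partial_s\log R$ has a sign slip in the last term: the correct expression is
\[
\partial_s\log R=-\tfrac{1}{2}\log(2\pi\sqrt{15})-\frac{3/2}{3s/2-p}+\frac{p/2+1}{s}-\frac{(p/2+2)(p/2+3)}{12s^2-s(p/2+2)(p/2+3)},
\]
so that term actually helps rather than hurts; your bound of $0.95$ on the ``positive contributions'' is therefore conservative and the monotonicity claim stands. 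Second, the paper streamlines the computation slightly: rather than differentiating the full ratio, it first replaces the factor $2^{-p/2}(3s/2-p)\big(12s-(p/2+2)(p/2+3)\big)/144$ by its minimum over each rectangle (this factor is visibly decreasing in $p$ and increasing in $s$), leaving only $s\log a-(p/2+2)\log s+\log\Gamma(p/2+2)$ to analyse; the monotonicity in $p$ and $s$ of this residual is then immediate from one-line derivative bounds, and no subdivision or fallback is needed even in the tight case (iii).
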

\begin{proof}
Note that $U < G$ is equivalent to the following inequality (after cancelling {\red the terms containing} $\Gamma(p/2)$ on both sides, factoring out $\Gamma(p/2+2)$ and moving terms across using that $3s/2-p > 0$),
\begin{equation}\label{eq:U1}
(2\pi\cdot 15^{1/2})^{s/2}2^{-p/2}\left(\frac{3s}{2}-p\right)\frac{12s-\left(\frac{p}{2}+2\right)\left(\frac{p}{2}+3\right)}{144} > \frac{s^{p/2+2}}{\Gamma(p/2+2)}.
\end{equation}
To shorten the notation, let $a = (2\pi)^{1/2}\cdot 15^{1/4}$ and
\[
A(p,s) = 2^{-p/2}\left(\frac{3s}{2}-p\right)\frac{12s-\left(\frac{p}{2}+2\right)\left(\frac{p}{2}+3\right)}{144}
\]
which is decreasing in $p$ and increasing in $s$. In each of the cases we will simply replace $A$ with its smallest possible value given the range of $p$ and $s$, so we let $p_1 = \frac14$, $s_1 = \frac{17}{10}$, $p_2 = \frac45$, $s_2 = 2$ and $p_3 = 2$, $s_3 = \frac83$ and have $A(p,s) \geq A_k$, where $A_k = A(p_k, s_k)$ for $k=1,2,3$ in cases (i), (ii), (iii), respectively. Then it suffices to prove that
\[
A_ka^s > \frac{s^{p/2+2}}{\Gamma(p/2{\red +2})}.
\]
We take the logarithm and consider 
\[
f(p,s) = s\log a+\log A_k-\left(\frac{p}{2}+2\right)\log s + \log \Gamma\left(\frac{p}{2}+2\right).
\]
Our goal is to show that $f(p,s) > 0$. We observe that
\[
\frac{\partial}{\partial p}f(p,s) = -\frac{1}{2}\log s + {\red \frac{1}{2}}\psi\left(\frac{p}{2}+2\right) \leq -\frac{1}{2}\log s_k + {\red \frac{1}{2}}\psi\left(\frac{p_k}{2}+2\right)
\]
in each case respectively and the resulting numerical values on the right bounded above by {\red $-0.015$, $-0.02$ and $-0.029$}, $k = 1, 2, 3$. Similarly,
\[
\frac{\partial}{\partial s}f(p,s) = \log a - \frac{p/2+2}{s} \geq \log a - \frac{p_k/2+2}{s_k}
\]
with the right hand side bounded this time below by $0.34$, $0.39$ and $0.47$, $k = 1, 2, 3$. Thus $f(p,s)$ is decreasing in $p$ and increasing in $s$, so 
\[
f(p,s) \geq f(p_k, s_k)
\]
and after plugging in the explicit numerical values, the right hand side is bounded below by $0.041$, $0.049$ and $0.032$, $k = 1, 2, 3$, thus proving (i), (ii) and (iii).
\end{proof}

The next two lemmas are vital for the interpolation argument.

\begin{lemma}\label{lm:interp>2}
For $\frac45 \leq p \leq 2$, we have
\[
F(p, 8/3) < e^{-p/6}G(p,2).
\]
\end{lemma}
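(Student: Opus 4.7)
My plan is to upper bound $F(p,8/3)$ by splitting the integral at $t=4$, in the spirit of the proof of Lemma~\ref{lm:j-U}. On $[0,4]$, I would apply $|\jj_1(t)|^{8/3}\leq e^{-t^2/3-t^4/144}$ from \eqref{eq:j1-exp} and substitute $u=t^2/3$, which transforms the main part into $\frac{3^{p/2}}{2}\int_0^{16/3}e^{-u-u^2/16}u^{p/2-1}\,du$. I would then extend the upper limit to infinity and expand $e^{-u^2/16}$ as a Taylor polynomial in $y=u^2/16$ truncated at an even order (so the remainder has the correct sign for an upper bound, as verified by the argument $(1-y+\tfrac{y^2}{2}-\tfrac{y^3}{6}+\tfrac{y^4}{24})-e^{-y}\geq 0$ used in the proof of Lemma~\ref{lm:ind-base}). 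Integrating term by term against $e^{-u}u^{p/2-1}$ yields an explicit sum of Gamma functions. On $[4,\infty)$, I would apply the Watson-type bound \eqref{eq:j1-w2}, giving a closed-form tail $\frac{(8/\pi)^{4/3}(16/15)^{2/3}\,4^{p-4}}{4-p}$.

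Adding the two pieces gives an explicit upper bound $\tilde U(p)$ on $F(p,8/3)$. The remaining task is to verify $\tilde U(p)\leq e^{-p/6}G(p,2)=e^{-p/6}\cdot 2^{p-1}\Gamma(p/2)$ for $p\in[4/5,2]$. After dividing through by $G(p,2)$, this becomes a concrete inequality between a finite sum of Gamma ratios $\Gamma(p/2+2k)/\Gamma(p/2)$ (which are polynomials in $p$) and the exponential $e^{-p/6}$. I would verify it by evaluating at the two endpoints $p=4/5$ and $p=2$ and establishing monotonicity of the defect in between; the right endpoint $p=2$ is expected to be the tight case, consistent with the location of the phase transition at $q_4^*=-2$.

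The main obstacle is sharpness near $p=2$, where the target $e^{-p/6}G(p,2)$ is very close to $F(p,8/3)$: the naive application of Lemma~\ref{lm:j-U} yields a bound of size roughly $0.997\cdot G(p,8/3)$ at $p=2$, whereas we need to beat $((4/3)e^{-1/3})^{p/2}\cdot G(p,8/3)\approx 0.955\cdot G(p,8/3)$. To close this gap I would refine the basic estimate in one of three ways: (i) retain an additional Taylor term beyond the cubic appearing in the definition of $U$ in \eqref{eq:defU}, (ii) exploit the fact that we only integrate up to $u=16/3$ by subtracting a positive lower bound for $\int_{16/3}^\infty e^{-u-u^2/16}u^{p/2-1}\,du$ (which is easy to estimate via $e^{-u-u^2/16}\geq e^{-u-u^2/16}\mathbf{1}_{\{16/3\leq u\leq 16/3+1\}}$), or (iii) sharpen the tail by using the precise Watson bound \eqref{eq:j1-w1} in place of \eqref{eq:j1-w2} and evaluating the resulting integral $\int_4^\infty t^{p-11/3}(t^2-1)^{-2/3}\,dt$ in closed form via the substitution $v=1/t$. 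Any combination of these refinements provides enough slack to conclude the desired strict inequality uniformly in $p\in[4/5,2]$.
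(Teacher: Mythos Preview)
Your plan follows the template of Lemma~\ref{lm:j-U}, but the numbers do not close: even with all three refinements combined, the analytic envelope on $\jj_1$ is not sharp enough at $p=2$. Concretely, the target is $e^{-1/3}G(2,2)=2e^{-1/3}\approx 1.433$. The bound $U(2,8/3)$ from \eqref{eq:defU} is $\approx 1.496$, so you need to shave off about $0.063$. Refinement~(i) cannot do this: after substituting $u=t^2/3$ the relevant series is $\sum_k(-1)^k\Gamma(p/2+2k)/(k!\,16^k)$, which is \emph{asymptotic} rather than convergent (the terms start growing again around $k=5$), and the best even partial sum at $p=2$ is $S_4\approx 0.9182$, improving the main term from $1.383$ only to about $1.377$. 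Refinement~(ii) is negligible: at $u=16/3$ one has $u+u^2/16\approx 7.1$, so the correction is $O(10^{-3})$. Refinement~(iii) barely moves the tail: evaluating $\int_4^\infty t^{1-8/3}(t^2-1)^{-2/3}\,dt$ exactly via $v=1/t$ gives $(8/\pi)^{4/3}\cdot\tfrac32\bigl(1-(15/16)^{1/3}\bigr)\approx 0.111$, versus $0.113$ from \eqref{eq:j1-w2}. Altogether the three refinements save at most about $0.01$, leaving you at $\approx 1.49>1.433$. The assertion that ``any combination of these refinements provides enough slack'' is therefore false.

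The underlying reason is that both \eqref{eq:j1-exp} and \eqref{eq:j1-w1} are very loose in the interval $[3.5,5]$: the first zero of $\jj_1$ sits at $t\approx 3.83$, so $|\jj_1(4)|\approx 0.033$ while the Watson envelope gives $\approx 0.20$. The paper abandons the analytic envelopes on this range and instead bounds $\int_0^5|\jj_1|^{8/3}t^{p-1}\,dt$ by a monotone Riemann sum with $m=100$ (using that $\jj_1$ is monotone on $[0,5]$), with the Watson tail only from $t_0=5$. This produces a bound of the explicit log-convex form $h(p)=\sum_k\lambda_k a_k^p$; the comparison $\log h(p)<\log\Gamma(p/2)$ is then reduced, via convexity of both sides, to checking tangent lines at twelve mesh points. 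Note that your final step---``evaluate at the two endpoints and establish monotonicity of the defect''---is also unjustified: there is no reason the defect should be monotone, and the paper's tangent-line scheme is precisely what replaces that claim.
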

\begin{proof}
Using
\eqref{eq:j1-w2} with $t_0 = 5$,
we get
\begin{equation}\label{eq:83-1}
\int_5^\infty |\jj_1(t)|^{8/3} t^{p-1} \dd t \leq (8/\pi)^{4/3}(25/24)^{2/3}\frac{5^{p-4}}{4-p} 
\end{equation}
which for $p \leq 2$ gives
\begin{align*}
\int_5^\infty |\jj_1(t)|^{8/3} t^{p-1} \dd t \leq (8/\pi)^{4/3}(25/24)^{2/3}\frac{5^{p-4}}{2} = \frac{2}{3^{2/3}\cdot 5^{8/3} \pi^{4/3}}5^p.
\end{align*}
We divide the interval $[0,5]$ into consecutive subintervals of the form $[\frac{k}{m},\frac{k+1}{m}]$, for $k = 0, 1, \dots, 5m-1$ with $m = 100$ and crudely bound
\begin{equation}\label{eq:83-2}
\begin{split}
\int_0^5 &|\jj_1(t)|^{8/3} t^{p-1} \dd t \\
&< \int_{0}^{1/m}t^{p-1}\dd t +  \frac{1}{m}\sum_{k=1}^{5m-1}\max\left\{\left|\jj_1\left(\frac{k}{m}\right)\right|^{8/3}, \left|\jj_1\left(\frac{k+1}{m}\right)\right|^{8/3}\right\}\\
&\qquad\qquad\qquad\qquad\qquad\qquad\cdot\max\left\{\left(\frac{k}{m}\right)^{p-1}, \left(\frac{k+1}{m}\right)^{p-1}\right\}
\end{split}
\end{equation}
(we have used that $|\jj_1|<1$ and that {\red $\jj_1$} is monotone on $[0,5]$, the {\red former justified by \eqref{eq:jj}} and the latter e.g. in \cite{OP}, p. 290, in the proof of Proposition 1.1).
Now, $\int_{0}^{1/m}t^{p-1}\dd t = \frac{1}{p m^p} < \frac{1}{0.8m^p}$. A resulting bound on $e^{p/6}2^{1-p}\int_0^\infty |\jj_1(t)|^{8/3} t^{p-1} \dd t $ is of the form
\[
h(p) = \sum_k \lambda_ka_k^p
\]
with explicit positive numbers $\lambda_k$, $a_k$. We check that $L(p) = \log h(p) < \log \Gamma(p/2) = R(p)$ for $0.8 \leq p \leq 2$ relying on the fact that both sides are clearly convex (recall that summation preserves log-convexity). Specifically, we divide the interval $[0.8, 2]$ into $12$ consecutive subintervals $[u_i, u_{i+1}]$, $u_i = 0.8+0.1i$, $i = 0, 1, \dots, 11$ and on each interval we lower-bound $R(p)$ by its tangent put at the middle $v_i = \frac{u_i+u_{i+1}}{2}$, $\ell_i(p) = R'(v_i)(p-v_i)+R(v_i)$ and then check that $\ell_i(p) > L(p)$ by checking the values at the end-points $p = u_i, u_{i+1}$, which are gathered in Table~\ref{tab:ell-L}. 
\end{proof}

\begin{table}[!hb]
\begin{center}
\caption{Proof of Lemma \ref{lm:interp>2}: lower bounds on the differences at the end-points of the linear approximations $\ell_i$ to $R(p)$.}
\label{tab:ell-L}
\begin{tabular}{r|cccccccccccc}
$i$ & 0 & 1 & 2 & 3 & 4 & 5 & 6 & 7 & 8 & 9 & 10 & 11 \\\hline
$10^3\cdot(\ell_i(u_i)-L(u_i))$ & $1$ & $5$ & $8$ & $9$ & $10$ & $12$ & $13$ & $14$ & $14$ & $15$ & $15$ & $15$  \\ 
$10^3\cdot(\ell_i(u_{i+1})-L(u_{i+1}))$ & $4$ & $8$ & $9$ & $10$ & $11$ & $13$& $14$ & $14$ & $15$ & $15$ & $15$ & $14$
\end{tabular}
\end{center}
\end{table}

\begin{lemma}\label{lm:interp<2}
For $0 < p \leq \frac{1}{4}$, we have
\[
F(p, 1.3) < e^{2p/17}G(p, 1.7).
\]
\end{lemma}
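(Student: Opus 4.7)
The proof follows the structure of Lemma \ref{lm:interp>2}, with modifications for the smaller $p$-range. Set $t_0 = 5$. Applying \eqref{eq:j1-w2} to the tail gives
\[
\int_5^\infty |\jj_1(t)|^{1.3} t^{p-1} \dd t \leq (8/\pi)^{13/20}(25/24)^{13/40} \cdot \frac{5^{p-1.95}}{1.95 - p} \leq C_1 \cdot 5^p,
\]
using $1.95 - p \geq 1.7$ for $p \leq \frac{1}{4}$. On $[0, 5]$, where $|\jj_1|$ is monotone (see \cite{OP}), we discretize with mesh $1/m$ for a large integer $m$ and bound $|\jj_1|^{1.3}$ by its endpoint maximum $M_k$ on each subinterval to get
\[
\int_0^5 |\jj_1(t)|^{1.3} t^{p-1} \dd t \leq \sum_{k=0}^{5m-1} M_k \int_{k/m}^{(k+1)/m} t^{p-1} \dd t.
\]
Thus $F(p, 1.3) \leq h(p)$, where $h$ is a positive combination of $5^p$ and functions $\int_a^b t^{p-1} \dd t$. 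Each such function is log-convex in $p$ (as an integral of log-linear functions, by H\"older's inequality), and sums of log-convex functions are log-convex; hence $L(p) := \log h(p)$ is convex on $(0, \frac{1}{4}]$.

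Taking logarithms of the desired inequality reduces the task to showing $L(p) < R(p)$ on $(0, \frac{1}{4}]$, where
\[
R(p) := \frac{2p}{17} + \log G(p, 1.7) = \frac{2p}{17} - \frac{p}{2}\log 1.7 + \left(\frac{3p}{2} - 1\right)\log 2 + \log\Gamma(p/2)
\]
is convex owing to convexity of $\log\Gamma$. Following the tangent-chord protocol of Lemma \ref{lm:interp>2}, we partition $(0, \frac{1}{4}]$ into subintervals $[u_i, u_{i+1}]$, lower-bound $R$ on each by its tangent line $\ell_i$ at the midpoint $v_i$, and verify numerically that $\ell_i(u_i) > L(u_i)$ and $\ell_i(u_{i+1}) > L(u_{i+1})$; convexity of $L$ then yields $L \leq \ell_i \leq R$ throughout the subinterval.

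The principal obstacle is the asymptotic tightness of the inequality at $p = 0^+$: since $\jj_1(0) = 1$, one has $pF(p, 1.3) \to 1$, and a direct computation gives $p \cdot e^{2p/17} G(p, 1.7) \to 1$ as well, so $R(p) - L(p) \to 0$. The factor $e^{2p/17}$ is precisely calibrated so that the first-order Taylor coefficients of $pF(p, 1.3)$ and $p \cdot e^{2p/17} G(p, 1.7)$ around $p = 0$ differ strictly in the correct direction, providing a slim but positive margin (verifiable by explicit integration, as the coefficient on the Gaussian side equals $\frac{2}{17} - \frac{1}{2}\log 1.7 + \frac{3}{2}\log 2 - \frac{\gamma}{2} \approx 0.603$, while the coefficient on the Bessel side is $\int_0^1 (|\jj_1(t)|^{1.3} - 1)/t \,\dd t + \int_1^\infty |\jj_1(t)|^{1.3}/t \,\dd t$, numerically smaller). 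Since the Riemann sum bound converges to $F$ as $m \to \infty$, taking $m$ and the partition mesh sufficiently fine forces the tangent-chord verification to succeed even on the subinterval closest to $0$; if necessary, the leftmost interval $(0, u_0]$ can be handled separately via a direct comparison of Taylor expansions around $p = 0$.
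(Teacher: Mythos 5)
Your overall architecture (envelope bound for the tail, Riemann sums on a finite interval, log-convexity plus tangent--chord verification) is the same as the paper's, but the specific tail treatment creates a genuine gap: applying \eqref{eq:j1-w2} with $t_0=5$ on all of $(5,\infty)$ is too lossy for an inequality this tight. Your tail bound replaces $|\jj_1(t)|^{1.3}$ by its non-oscillating envelope $\bigl((8/\pi)^{1/2}(25/24)^{1/4}\bigr)^{1.3}t^{-1.95}\approx 1.86\,t^{-1.95}$, i.e.\ it discards the factor $|\cos(t-3\pi/4)|^{1.3}$, whose mean is about $0.59$. Numerically, at $p=\tfrac14$ the true contribution of $(5,\infty)$ to $F(p,1.3)$ is about $0.041$, while your bound gives about $0.071$ --- an overshoot of roughly $0.03$. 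But the total room in the inequality at $p=\tfrac14$ is only about $0.015$: one has $F(\tfrac14,1.3)\approx 4.69$ versus $e^{2p/17}G(\tfrac14,1.7)\approx 4.71$ (indeed the paper's own verification at $p=0.25$ survives by only a few times $10^{-4}$ after multiplying by $p$). Consequently $h(p)>e^{2p/17}G(p,1.7)$ near $p=\tfrac14$ \emph{no matter how fine the mesh on $[0,5]$ is}, and the tangent--chord verification cannot succeed there. This is exactly why the paper splits off $(5,10)$, evaluates it by a midpoint Riemann sum with a derivative-based error bound ($\sup_{[5,10]}|\frac{\dd}{\dd t}|\jj_1|^{1.3}|<0.06$), and only invokes the envelope bound from $t_0=10$, where its cost (about $0.009$) fits inside the available margin.

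The same loss also undermines your treatment of the endpoint $p=0^+$. Your main protocol compares $L=\log h$ with $R$, but both blow up at $0$, so the tangent--chord scheme cannot be run down to $p=0$; the viable fix is the paper's: multiply through by $p$ so both sides equal $1$ at $p=0$ and compare slopes. Your computation of the Gaussian-side slope ($\approx 0.603$) is correct, and the true Bessel-side slope is indeed smaller ($\approx 0.58$), but the crude tail bound adds about $0.02$--$0.025$ to the slope of $p\,h(p)$ (its tail term contributes $\approx 0.041$--$0.047$ versus the true $\approx 0.024$), which consumes essentially the whole first-order margin. So even the proposed ``Taylor comparison at $p=0$'' is at best borderline with your $h$. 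In short: the interpolation endpoint $s=1.3$ leaves so little slack that the $(5,10)$ range must be handled with an oscillation-respecting (Riemann-sum) estimate, not the envelope bound; with that repair, the rest of your argument (log-convexity of the bound, convexity of the right-hand side, tangent--chord checks after multiplying by $p$) matches the paper's proof.
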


\begin{proof}
Fix $0 < p \leq \frac14$. We break the integral on the left hand side into the sum of $4$ integrals $A_1+\dots+A_4$ over $(0,1)$, $(1,5)$, $(5,10)$ and $(10,\infty)$. For the first one, we use {\red \eqref{eq:j1-exp}},
\[
|\jj_1(t)|^{1.3} < \exp\left\{-\frac{13}{10}\left(\frac{t^2}{8}+\frac{t^4}{3\cdot 2^7}\right)\right\} < 1 - \frac{13}{80}t^2 + \frac{377}{38400}t^4, \qquad 0 < t < 1
\]
(the last inequality obtained by taking the first terms in the power series expansion of the penultimate expression, which gives an upper bound as can be checked directly by differentiation). Integrating against $t^{p-1}$ yields
\[
A_1 \leq \frac{1}{p} - \frac{13}{80(p+2)} + \frac{377}{38400(p+4)} < \frac{1}{p} - \frac{13}{80(p+2)} + \frac{377}{38400\cdot 4}.
\]
For the last one, we use \eqref{eq:j1-w2} with $t_0 = 10$,
\begin{align*}
A_4 \leq \int_{10}^\infty \Big((8/\pi)^{1/2}(100/99)^{1/4}t^{-3/2}\Big)^{1.3}t^{p-1}\dd t &= \frac{2^{53/20}}{11^{13/40}\cdot 5^{3/10}(3\pi)^{13/20}}\frac{10^p}{39-20p} \\
&\leq \frac{2^{53/20}}{11^{13/40}\cdot 5^{3/10}(3\pi)^{13/20}}\frac{10^p}{34}.
\end{align*}
For $A_2$ and $A_3$, we use Riemann sums. First, without any error term thanks to the monotonicity of $j_1$ on $(1,5)$,
\begin{align*}
A_2 &\leq \sum_{k=0}^{4m-1} \max\left\{\left|\jj_1\left(1+\frac{k}{m}\right)\right|^{1.3},\left|\jj_1\left(1+\frac{k+1}{m}\right)\right|^{1.3}\right\}\int_{1+\frac{k}{m}}^{1+\frac{k+1}{m}} t^{p-1}\dd t \\
&<\sum_{k=0}^{4m-1} \max\left\{\left|\jj_1\left(1+\frac{k}{m}\right)\right|^{1.3},\left|\jj_1\left(1+\frac{k+1}{m}\right)\right|^{1.3}\right\}\frac{(1+k/m)^{p-1}}{m}.
\end{align*}
Second, on $(5,10)$, we choose the midpoints and bound the error simply using the supremum of the derivative via the crude (numerical) bound
\[
\sup_{t \in [5,10]} \left|\frac{\dd}{\dd t}|\jj_1(t)|^{1.3}\right| < 0.06
\]
(since $\left|\frac{\dd}{\dd t}|\jj_1(t)|^{1.3}\right| = 1.3|\jj_1(t)|^{0.3}|\jj_1'(t)|$ and $j_1'(t) = -2\frac{J_2(t)}{t} = 2\frac{J_0(t)}{t}-4\frac{J_1(t)}{t^2}$, the function under the supremum can be expressed in terms of $J_0$ and $J_1$ and  the supremum can be estimated by employing the precise polynomial-type approximations to $J_0$ and $J_1$ from \cite{AS}, 9.4.3 and 9.4.6, pp.369--370).
This leads to
\begin{align*}
A_3 &\leq \sum_{k=0}^{5m-1} \left|\jj_1\left(5+\frac{k+1/2}{m}\right)\right|^{1.3}\int_{5+\frac{k}{m}}^{5+\frac{k+1}{m}} t^{p-1}\dd t + 0.06\frac{1}{2m}\int_5^{10}t^{p-1}\dd t \\
&< \sum_{k=0}^{5m-1} \left|\jj_1\left(5+\frac{k+1/2}{m}\right)\right|^{1.3}\frac{(5+k/m)^{p-1}}{m} + \frac{3\cdot 5^p}{100m}.
\end{align*}
With hindsight, we choose $m = 200$. Adding these $4$ estimates together (call the right-most hand sides of these bounds $B_1, \dots, B_4$) and multiplying through $p$, it suffices to show that $L(p) < R(p)$ for $0 < p \leq \frac{1}{4}$, where
\[
L(p) = p\cdot(B_1+\dots+B_4), \qquad R(p) = (e^{2/17}2^{3/2}1.7^{-1/2})^p\Gamma\left(\frac{p}{2}+1\right).
\]
Plainly, $R(p)$ is convex (as being log-convex), whilst
\[
L(p) = \frac{67}{80}+\frac{13}{40(p+2)} + \frac{377}{153600}p + c_1\cdot p10^p + c_2\cdot p5^p + \sum_i \lambda_i pa_i^p
\]
with positive constant $c_1, c_2, \lambda_i$ (specified above) and $a_i \geq 1$ (of the form $(1+k/m)$, $k \geq 0$). Thus, $L(p)$ is also convex and now we proceed similarly to what we did in the proof of Lemma \ref{lm:interp>2}. Note that $L(0) = R(0) = 1$. For $0< p \leq 0.02$, we lower-bound, $R(p) \geq \ell_0(p) = 1 + R'(0)p$ and check that $\ell_0(0.02) - L(0.02) > 10^{-5} > 0$, to conclude $R(p) \geq L(p)$, $0 \leq p \leq 0.02$. We divide the remaining interval $(0.02, 0.25)$ into $6$ intervals: $(0.02, 0.05)$, $(0.05, 0.1)$, $(0.1, 0.15)$, $(0.15, 0.2)$, $(0.2, 0.23)$, $(0.23, 0.25)$, denoted say $(u_i, u_{i+1})$, $i=1,\dots, 6$, choose their midpoints $v_i = \frac{u_i+u_{i+1}}{2}$ and lower-bound $R(p)$ by its tangent $\ell_i(p) = R'(v_i)(p-v_i)+R(v_i)$ and check that $\ell_i(p) > L(p)$ at $p = u_i, u_{i+1}$ (see Table \ref{tab:ell-L2}) to conclude that $R(p) > L(p)$ for all $u_i \leq p \leq u_{i+1}$ $i = 1,\dots, 6$, by convexity.
\end{proof}

\begin{table}[!htb]
\begin{center}
\caption{Proof of Lemma \ref{lm:interp<2}: lower bounds on the differences at the end-points of the linear approximations $\ell_i$ to $R(p)$.}
\label{tab:ell-L2}
\begin{tabular}{r|cccccc}
$i$ & 1 & 2 & 3 & 4 & 5 & 6 \\\hline
$10^4\cdot(\ell_i(u_i)-L(u_i))$             & $.7$ & $1$ & $3$ & $4$ & $5$  & $3$  \\ 
$10^4\cdot(\ell_i(u_{i+1})-L(u_{i+1}))$ & $2$ & $3$ & $4$ & $3$ & $3$ & $2$
\end{tabular}
\end{center}
\end{table}

We are ready to prove the main inequalities of this section.

\begin{proof}[Proof of Lemma \ref{lm:H}]
First we show (a). Lemma \ref{lm:j-U} combined with Lemma \ref{lm:U-G} (ii), (iii) gives (a) for all $0 < p \leq \frac45$, $s \geq 2$, as well as all $0 < p \leq 2$ and $s \geq \frac83$, respectively. It remains to handle the case $\frac45 < p < 2$, $2 \leq s \leq \frac83$. We apply H\"older's inequality, Lemma \ref{lm:interp>2} and {\red \eqref{eq:Hs=2}, equivalently $F(p,2)\ls G(p,2)$,} to get,
\begin{align*}
F(p,s) &\leq F(p, 2)^{\frac{8-3s}{2}}F(p, 8/3)^{\frac{3s-6}{2}} \\
&\leq \Big(G(p,2)\Big)^{\frac{8-3s}{2}}\Big(e^{-p/6}G(p,2)\Big)^{\frac{3s-6}{2}} \\
&= e^{-p\frac{s-2}{4}}2^{p-1}\Gamma(p/2).
\end{align*}
By concavity, $\log s \leq \frac{s-2}{2}+\log 2$, $s \geq 2$, thus
\begin{equation}\label{eq:exp-ps}
e^{-p\frac{s-2}{4}} \leq s^{-p/2}2^{p/2},  \qquad s \geq 2,\ p > 0,
\end{equation}
which gives (a).

To show (b), we proceed similarly. Lemma \ref{lm:j-U} combined with Lemma \ref{lm:U-G} (i) gives (b) for all $0 < p \leq \frac14$ and $s \geq 1.7$. In the remaining case $1.3 \leq s \leq 1.7$, from H\"older's inequality, Lemma \ref{lm:U-G} (i) and Lemma \ref{lm:interp<2}, we obtain
\begin{align*}
F(p,s) &\leq F(p, 1.7)^{\frac{10s-13}{4}} F(p, 1.3)^{\frac{17-10s}{4}}\\
&\leq \Big(G(p, 1.7)\Big)^{\frac{10s-13}{4}}\Big(e^{2p/17}G(p, 1.7)\Big)^{\frac{17-10s}{4}} \\
&= e^{\frac{p}{2}\frac{17-10s}{17}}1.7^{-p/2}2^{3p/2-1}\Gamma(p/2).
\end{align*}
Thanks to concavity, $\log s \leq \frac{10}{17}s-1+\log 1.7$, $s \leq 1.7$, which gives $e^{\frac{p}{2}\frac{17-10s}{17}}1.7^{-p/2} \leq s^{-p/2}$, whence (b).
\end{proof}

\subsection{The integral inequality: $2 < p < 3$}

We follow the general approach from the previous case $p < 2$. Recall \eqref{eq:defF}, $F(p,s) = \int_0^\infty |\jj_1(t)|^st^{p-1}\dd t$ and that the crucial integral inequality \eqref{eq:perfect1} reads $s^{p/2}F(p,s) \leq 2^{p/2}F(p,2).$
Thus here we let
\begin{equation}\label{eq:defHt}
\tilde H(p, s) = s^{-p/2}2^{p/2}F(p,2) - F(p,s), \qquad 2 < p < 3,\  s > 1.
\end{equation}
Note that we can express $F(p,2)$ explicitly: using Corollary \ref{cor:neg-mom-rot} and \eqref{eq:C2p}, we obtain
\begin{align*}
F(p,2) = \int_0^\infty \jj_1(t)^2 t^{p-1} \dd t = \kappa_{p,4}^{-1}\E|\xi_1+\xi_2|^{-p}  &= \kappa_{p,4}^{-1}2^{-p/2}C_{2}(p)\\
&=2^{p-1}\frac{\Gamma\left(\frac{p}{2}\right)\Gamma(3-p)}{\left[\Gamma\left(2-\frac{p}{2}\right)\right]^2\Gamma\left(3-\frac{p}{2}\right)}.
\end{align*}
In view of \eqref{eq:defHt}, we therefore set
\begin{equation}\label{eq:defGt}
\tilde G(p,s) = s^{-p/2}2^{3p/2-1}\Gamma(p/2)D(p)
\end{equation}
with 
\begin{equation}\label{eq:defD}
D(p) = \frac{\Gamma(3-p)}{\left[\Gamma\left(2-\frac{p}{2}\right)\right]^2\Gamma\left(3-\frac{p}{2}\right)},
\end{equation}
so that
\[
\tilde H(p,s) = \tilde G(p,s) - F(p,s).
\]
The main result of this section is that integral inequality \eqref{eq:perfect1} also holds for all $s \geq 2$. We emphasise that $\tilde H(p,2) = 0$.

\begin{lemma}\label{lm:Ht}
The inequality $\tilde H(p,s) > 0$ holds for all $2 < p < 3$ and $s \geq 2$.
\end{lemma}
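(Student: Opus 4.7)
\textbf{Plan of proof of Lemma \ref{lm:Ht}.} The starting point is the equality $\tilde H(p,2)=0$: indeed, a direct computation gives $\tilde G(p,2)=2^{-p/2}\cdot 2^{3p/2-1}\Gamma(p/2)D(p)=2^{p-1}\Gamma(p/2)D(p)=F(p,2)$. Rewriting the desired inequality in the equivalent, scaling-invariant form
\[
\frac{F(p,s)}{F(p,2)} \leq \left(\frac{s}{2}\right)^{-p/2}, \qquad s \geq 2,
\]
makes it apparent that at $s=2$ we have equality, and we must show strict inequality for $s > 2$. As in the proof of Lemma \ref{lm:H}, the plan is to split the range $s \geq 2$ into a small-$s$ regime (say $2 \leq s \leq 8/3$, handled by interpolation from the pivot $s=2$) and a large-$s$ regime (handled via the explicit bound $F<U$ from Lemma \ref{lm:j-U}).

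\emph{Small-$s$ regime.} The function $s \mapsto \log F(p,s)$ is convex (a standard consequence of H\"older's inequality applied to $\int |\jj_1|^s\, t^{p-1}\dd t$), so for $2 \leq s \leq 8/3$,
\[
\log\frac{F(p,s)}{F(p,2)} \leq \frac{s-2}{8/3-2}\log\frac{F(p,8/3)}{F(p,2)} = \frac{3(s-2)}{2}\log\frac{F(p,8/3)}{F(p,2)}.
\]
Combined with the concavity estimate $\log(s/2)\leq (s-2)/2$, the target inequality reduces to the single pivot estimate
\[
F(p, 8/3) \leq e^{-p/6}\,F(p,2), \qquad 2 < p < 3. \tag{$\star$}
\]
This is the direct analogue, for $p \in (2,3)$, of Lemma \ref{lm:interp>2}; and since $D(p) \geq 1$ on $[2,3)$ (with $D(2)=1$, $D(p)\to\infty$ as $p\to 3$), we have $F(p,2)\geq G(p,2)$, so $(\star)$ is in fact \emph{weaker} than the bound $F(p,8/3) \leq e^{-p/6}G(p,2)$. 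To verify $(\star)$ we mimic the strategy of Lemma \ref{lm:interp>2}: split $[0,\infty)$ into $[0,5]\cup[5,\infty)$, bound the tail integral via \eqref{eq:j1-w2} with $t_0=5$, bound the finite piece by a Riemann sum over a fine partition (using monotonicity of $\jj_1$ on $[0,5]$ and \eqref{eq:j1-exp} near $0$), divide $[2,3-\delta]$ into a few subintervals and, on each, compare the two log-convex sides via tangent-line lower bounds at their midpoints, checking the inequality at the endpoints numerically.

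\emph{Large-$s$ regime.} For $s \geq 8/3$ we apply Lemma \ref{lm:j-U} to get $F(p,s)<U(p,s)$ and must verify $U(p,s) \leq \tilde G(p,s) = G(p,s)D(p)$. Since on the right-hand side we now carry the extra factor $D(p) \geq 1$ (with $D(p) \to \infty$ as $p \to 3$), the argument of Lemma \ref{lm:U-G}(iii) adapts readily: after cancellation one reduces (as in \eqref{eq:U1}) to checking the positivity of an expression of the form $f(p,s)+\log D(p)$ on $(2,3) \times [8/3,\infty)$. Monotonicity in $s$ is unchanged; monotonicity in $p$ may fail near $p=3$, but this is exactly where $\log D(p)$ explodes, providing arbitrary slack. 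A finite partition of $[2,3-\delta]$ together with the crude bound $D(p)\geq 1$ on each piece, plus a separate argument on $[3-\delta,3)$ using the growth of $D$, finishes this regime.

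\emph{Main obstacle.} The hard part is the pivot estimate $(\star)$. Unlike Lemma \ref{lm:interp>2}, here $F(p,2)$ diverges as $p \to 3$, so the two sides of $(\star)$ behave very differently at the ends of the interval: near $p=2$ the inequality degenerates (both sides equal $F(2,8/3)$ times bounded factors, with the $e^{-p/6}$ only just making up the difference between $G$ and $F$), while near $p=3$ the right-hand side blows up and the bound is easy. The delicate work is therefore concentrated near $p=2$, where one must either chain the bound of Lemma \ref{lm:interp>2} through the identity $F(p,2)=G(p,2)D(p)$ together with a quantitative lower bound on $D(p)-1$ for $p>2$, or repeat the numerical Riemann-sum procedure on a finer partition. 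The rest of the argument is, from the technical standpoint, a direct adaptation of the bounds already developed for Lemma \ref{lm:H}.
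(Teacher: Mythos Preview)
Your plan is correct and follows the paper's approach almost exactly: split into $s\in[2,8/3]$ (H\"older interpolation reducing to the pivot estimate $(\star)$, which is precisely the paper's Lemma \ref{lm:interpol-t}, since $F(p,2)=\tilde G(p,2)$) and $s\geq 8/3$ (Lemma \ref{lm:j-U} combined with $U<\tilde G$, which is the paper's Lemma \ref{lm:UGt}). Two small corrections of detail: for the large-$s$ part the reduction is not of the form ``$f(p,s)+\log D(p)$''---after rearranging, the extra slack enters additively as $s^2\Gamma(p/2)\bigl(D(p)-1\bigr)$, and the paper exploits monotonicity in $s$ of both sides (no logarithms) to reduce to $s=8/3$; and your concern about the endpoint $p=2$ in $(\star)$ is overstated---the paper's Lemma \ref{lm:interpol-t} dispatches all of $(2,3)$ with just two subintervals and tangent-line lower bounds on the log-convex right-hand side, with comfortable numerical margin.
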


This will be established in a very much similar way to the previous section: crude pointwise bounds on $\jj_1$ will suffice to handle the case $s \geq \frac{8}{3}$ which will then be extended to $s \geq 2$ by interpolation. 

\begin{lemma}\label{lm:D-log-convex}
With $D(p)$ defined in \eqref{eq:defD}, the function $p \mapsto \log D(p)$  is increasing, convex and positive on $(2,3)$.
\end{lemma}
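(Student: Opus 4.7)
My plan is to substitute $t = p-2 \in (0,1)$ and exploit the recursion $\Gamma(2-t/2) = (1-t/2)\Gamma(1-t/2)$ to rewrite
\[
L(t) := \log D(t+2) = \log\Gamma(1-t) - 3\log\Gamma(1-t/2) - \log(1-t/2).
\]
Since $D(2) = 1$, we have $L(0) = 0$, and a direct computation using $\psi(1) = -\gamma$ gives
\[
L'(0) = -\psi(1) + \tfrac{3}{2}\psi(1) + \tfrac12 = \tfrac{1-\gamma}{2} > 0.
\]
All three assertions of the lemma (increasing, convex, positive on $(2,3)$) will therefore follow at once from the single inequality $L''(t) > 0$ on $[0,1)$: convexity is immediate, monotonicity follows from $L'(t) \geq L'(0) > 0$, and positivity from $L(t) > L(0) = 0$.

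To establish $L''(t) > 0$, I would differentiate once more to obtain
\[
L''(t) = \psi'(1-t) - \tfrac{3}{4}\psi'(1-t/2) + \frac{1}{(2-t)^2},
\]
and then expand the two trigamma terms via the Hurwitz-type series $\psi'(x) = \sum_{k=0}^\infty (x+k)^{-2}$. Because $0 < 1-t+k \leq 1-t/2+k$ for every $k \geq 0$ and every $t \in [0,1)$, each summand in
\[
\psi'(1-t) - \tfrac{3}{4}\psi'(1-t/2) = \sum_{k=0}^\infty\left[\frac{1}{(1-t+k)^2}-\frac{3/4}{(1-t/2+k)^2}\right]
\]
is bounded below by $\tfrac14(1-t/2+k)^{-2} \geq 0$, so the whole sum is non-negative (strictly positive for $t > 0$). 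Adding the explicitly positive boundary term $(2-t)^{-2}$ then forces $L''(t) > 0$ on all of $[0,1)$.

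There is essentially no obstacle here: the statement collapses to a termwise comparison in the defining series of $\psi'$ once the right simplification of $\log D$ is performed — the key observation being that the coefficient $\tfrac34$ in front of the offending term $\psi'(1-t/2)$ is strictly less than $1$ (so that even without the monotonicity of $\psi'$, each pair of summands is safely positive). The only numerical input one needs is the trivial inequality $\gamma < 1$, which guarantees the correct sign of $L'(0)$.
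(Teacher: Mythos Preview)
Your argument is correct. Both you and the paper reduce to proving convexity of a suitable reparametrisation of $\log D$ via the series $\psi'(x)=\sum_{k\geq 0}(x+k)^{-2}$, and both then deduce monotonicity and positivity from $\frac{\dd}{\dd p}\log D(p)\big|_{p=2}=\tfrac{1-\gamma}{2}>0$ together with $D(2)=1$.

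The difference lies in the preliminary simplification. The paper substitutes $x=\tfrac{3-p}{2}\in(0,\tfrac12)$ and applies the Legendre duplication formula, arriving at the task of showing that
\[
f(x)=\log\Gamma(x)-\log\Gamma\!\left(x+\tfrac12\right)-\log\Gamma\!\left(x+\tfrac32\right)
\]
is convex on $(0,\tfrac12)$. The resulting second derivative involves three trigamma series with equal weights, and the termwise comparison is not clean: the paper splits off the first terms and estimates the tails numerically, ultimately appealing to $7-\tfrac{\pi^2}{2}>0$. Your substitution $t=p-2$ together with the recursion $\Gamma(2-t/2)=(1-t/2)\Gamma(1-t/2)$ instead produces
\[
L''(t)=\psi'(1-t)-\tfrac34\,\psi'(1-t/2)+\frac{1}{(2-t)^2},
\]
where the offending trigamma term carries coefficient $\tfrac34<1$. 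This makes the termwise comparison immediate: $(1-t+k)^{-2}\geq(1-t/2+k)^{-2}$ already gives each summand at least $\tfrac14(1-t/2+k)^{-2}>0$, with no numerical input needed beyond $\gamma<1$. So your route is genuinely shorter and more elementary; the paper's route, via duplication, has the mild advantage of writing $D$ in a form where all gamma arguments stay bounded away from $0$ for every $p\in(2,3)$, but this is not exploited in the proof.
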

\begin{proof}
Let $x = \frac{3-p}{2}$, $0 < x < \frac{1}{2}$. By the Legendre duplication formula (see, e.g. 6.1.18 in \cite{AS}),
\[
D(p) = \frac{\Gamma(2x)}{\Gamma(x+\frac12)^2\Gamma(x+\frac32)} = \frac{2^{2x-1}\Gamma(x)}{\sqrt{\pi}\Gamma(x+\frac{1}{2})\Gamma(x+\frac{3}{2})}.
\]
Thus the convexity of $\log D(p)$ on $(2,3)$ is equivalent to the convexity of 
\[
f(x) = \log \Gamma(x) - \log\Gamma\left(x+\frac12\right) - \log\Gamma\left(x+\frac32\right)
\]
on $(0,\frac12)$. Using the series representation of $(\log \Gamma(z))'' = \sum_{n=0}^\infty (z+n)^{-2}$ (see, e.g. 6.4.10 in \cite{AS}), we get
\begin{align*}
f''(x) &= \sum_{n=0}^\infty \frac{1}{(x+n)^2} - \sum_{n=0}^\infty\frac{1}{(x+n+\frac12)^2} - \sum_{n=0}^\infty\frac{1}{(x+n+3/2)^2} \\
&= \frac{1}{x^2} - \frac{1}{(x+\frac12)^2} + \sum_{n=1}^\infty \frac{1}{(x+n)^2} - 2\sum_{n=1}^\infty \frac{1}{(x+n+\frac12)^2}.
\end{align*}
For $0 < x < \frac12$, 
\[
\sum_{n=1}^\infty \frac{1}{(x+n)^2} - 2\sum_{n=1}^\infty \frac{1}{(x+n+\frac12)^2} > \sum_{n=1}^\infty \frac{1}{(\frac12+n)^2} - 2\sum_{n=1}^\infty \frac{1}{(n+\frac12)^2} = -\frac{\pi^2}{2}+4,
\]
thus
\[
f''(x) > \frac{1}{x^2} - \frac{1}{(x+\frac12)^2}  -\frac{\pi^2}{2}+4.
\]
The right hand side is clearly decreasing (e.g., by looking at the derivative), so for $0 < x < \frac12$, it is at least $4 - 1 - \frac{\pi^2}{2} + 4 = 7 - \frac{\pi^2}{2}$ which is positive. 

Moreover, $\frac{\dd}{\dd p}\log D(p)|_{p=2} = \frac{1-\gamma}{2} > 0$ ($\gamma = 0.57..$ is Euler's constant), so $D(p)$ is strictly increasing on $(2,3)$ with $D(2) = 1$.
\end{proof}

\begin{lemma}\label{lm:UGt}
For all $2 < p < 3$ and $s \geq \frac{8}{3}$, we have 
\[
U(p,s) < \tilde G(p,s).
\]
\end{lemma}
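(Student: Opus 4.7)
The approach parallels Lemma \ref{lm:U-G}, with the new ingredient being the factor $D(p) > 1$ on $(2,3)$ (which moreover blows up as $p \to 3^-$, as can be read off from Lemma \ref{lm:D-log-convex} together with the explicit form of $D$). This extra factor provides the room needed when $p > 2$, where the original $U < G$ inequality no longer holds.

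Using $\Gamma(p/2+2) = (p/2)(p/2+1)\Gamma(p/2)$ and $\Gamma(p/2+4) = (p/2)(p/2+1)(p/2+2)(p/2+3)\Gamma(p/2)$, the inequality $U(p,s) < \tilde G(p,s)$ factors as
$$\frac{4^p(2\pi\cdot 15^{1/2})^{-s/2}}{3s/2 - p} < 2^{3p/2-1}s^{-p/2}\Gamma(p/2)\,M(p,s),$$
where
$$M(p,s) = (D(p)-1) + \frac{p(p+2)}{24s} - \frac{p(p+2)(p+4)(p+6)}{1152\, s^2}.$$
One first observes $M > 0$ on $(2,3) \times [8/3, \infty)$: the two polynomial terms combine to $\tfrac{p(p+2)}{24s}\bigl(1 - \tfrac{(p+4)(p+6)}{48s}\bigr)$, positive since $(p+4)(p+6)/(48s) \leq 63/128 < 1$ in our range, while $D(p) - 1 > 0$ by Lemma \ref{lm:D-log-convex}.

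Taking logarithms reduces the task to proving positivity of
$$\tilde f(p,s) = s\log a - (\tfrac{p}{2}+1)\log 2 + \log(\tfrac{3s}{2} - p) - \tfrac{p}{2}\log s + \log\Gamma(p/2) + \log M(p,s),$$
with $a = (2\pi)^{1/2}15^{1/4}$. My plan has two steps. First, I would show $\partial_s \tilde f > 0$ on $(2,3) \times [8/3, \infty)$, reducing the problem to a check at $s = 8/3$. Differentiating,
$$\partial_s \tilde f = \log a + \frac{3/2}{3s/2 - p} - \frac{p}{2s} + \partial_s \log M(p,s);$$
the first three terms already exceed $\log a - 9/16 \approx 1$, and $|\partial_s \log M|$ stays moderate because $|\partial_s M| = O(s^{-2})$ while $M$ is bounded below by its polynomial part (of size $\asymp 1/s$). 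Second, I would verify $\tilde f(p, 8/3) > 0$ on $(2, 3)$: as $p \to 3^-$, $D(p) \to \infty$, so $\log M \to \infty$ and $\tilde f \to \infty$; at $p = 2$ a direct evaluation gives $\tilde f(2, 8/3) > 0$. The intermediate range is then handled by partitioning $(2, 3)$ into finitely many subintervals and lower-bounding $\tilde f(\cdot, 8/3)$ by tangent (or piecewise linear) approximations, using the convexity and monotonicity of $\log D(p)$ from Lemma \ref{lm:D-log-convex} (and of $\log \Gamma(p/2)$) to reduce to a numerical check at finitely many points, in the spirit of Lemmas \ref{lm:interp>2} and \ref{lm:interp<2}.

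The main obstacle is that the numerical verification at $s = 8/3$ near $p = 2$ is tight: this is precisely where Lemma \ref{lm:U-G} (iii) is on the verge of failing, and $D(p) - 1$ vanishes at $p = 2$, so $\tilde G$ barely exceeds $U$ in this regime. A careful piecewise estimate, making explicit use of the first-order behaviour $D(p) = 1 + \tfrac{1-\gamma}{2}(p-2) + O((p-2)^2)$ near $p = 2$, is needed to push through; once $p$ is bounded away from $2$, the factor $D(p) - 1$ is quickly bounded away from zero and the inequality relaxes comfortably.
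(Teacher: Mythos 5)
Your overall route is the same as the paper's: reduce to $s=\tfrac83$ by monotonicity in $s$, then settle the remaining one-variable inequality in $p$ on $(2,3)$ by convexity/tangent-line numerics. Your Step 1 is sound and can be completed: writing $\partial_s M = -\frac{p(p+2)}{24s^2}\bigl(1-\frac{(p+4)(p+6)}{24s}\bigr)$ and using $M \geq \frac{p(p+2)}{24s}\bigl(1-\frac{(p+4)(p+6)}{48s}\bigr)>0$, one gets $|\partial_s \log M| \leq \tfrac1s \leq \tfrac38$, comfortably below $\log a - \tfrac{9}{16} \approx 1.03$, so $\partial_s\tilde f>0$. (The paper performs the same reduction to $s=\tfrac83$ slightly differently, by rearranging the inequality so that one side is visibly increasing and the other decreasing in $s$; that difference is cosmetic.)

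The genuine gap is in Step 2. After taking logarithms you must prove $\tilde f(p,\tfrac83)>0$ on all of $(2,3)$, and your plan --- tangent or piecewise-linear lower bounds reducing the problem ``to a numerical check at finitely many points'' --- presupposes convexity (or some comparable one-sided structure) of $\tilde f(\cdot,\tfrac83)$, which you never establish and which is not evident: the problematic term is $\log M(p,\tfrac83)$, the logarithm of the \emph{sum} of $D(p)-1$ and an explicit polynomial in $p$, and neither the log-convexity of $D$ (Lemma \ref{lm:D-log-convex}) nor that of $\Gamma$ passes through such a sum. Moreover you check no intermediate values: you evaluate only at $p=2$ (where the margin is indeed about $0.03$) and invoke $D(p)\to\infty$ as $p\to3^-$, which says nothing about, say, $p=2.5$; with a margin that small near $p=2$, the middle range cannot be waved through. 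The paper closes exactly this step by \emph{not} taking logarithms: after the reduction to $s=\tfrac83$ it lower-bounds $\Gamma(p/2)\geq 0.88$ (Lemma \ref{lm:min-gamma}) and $\Gamma(p/2+2)\geq 2$, and is then left comparing $R(p)=0.88(8/3)^2(D(p)-1)+\tfrac{8}{9}$ with the explicit function $L(p)= b\,(16/3)^{p/2}/(4-p)+\tfrac{1}{36}(p/2+2)(p/2+3)$, $b=2a^{-8/3}(8/3)^2$, \emph{both} of which are manifestly convex (for $R$, via Lemma \ref{lm:D-log-convex}); two tangent lines to $R$, at $p=2$ and $p=\tfrac52$, plus four endpoint evaluations then finish the proof. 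To repair your version you would need either to prove convexity of the relevant pieces of $\tilde f(\cdot,\tfrac83)$ or to bound each term separately on a sufficiently fine partition with explicit constants --- neither of which is done in the proposal as written; alternatively, abandon the logarithmic form at this stage and argue as the paper does.
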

\begin{proof}
We let $a = (2\pi)^{1/2}\cdot 15^{1/4}$ and inserting the definitions of $U$ from \eqref{eq:defU} and $\tilde G$ from \eqref{eq:defGt}, the desired inequality becomes
\begin{align*}
\frac{4^pa^{-s}}{3s/2-p} +  s^{-p/2}2^{3p/2-1}&\left(\Gamma\left(p/2\right) - \frac{\Gamma(p/2+2)}{6s} + \frac{\Gamma(p/2+4)}{72s^2}\right) \\
&\qquad\qquad\qquad\qquad < s^{-p/2}2^{3p/2-1}\Gamma\left(\frac{p}{2}\right)D(p),
\end{align*}
equivalently,
\begin{align*}
\frac{2^{p/2+1}a^{-s}}{3s/2-p}s^{p/2+2} < s^2\Gamma\left(\frac{p}{2}\right)(D(p)-1) + \Gamma\left(\frac{p}{2}+2\right)\frac{12s-(p/2+2)(p/2+3)}{72}.
\end{align*}
The right hand side is clearly increasing with $s$ ($D(p) > 1$ by Lemma \ref{lm:D-log-convex}), whereas the left hand side is decreasing with $s$ (for every fixed $2 < p < 3$), as can be checked by examining the derivative of $\log(a^{-s}s^{p/2+2})$. Therefore, it suffices to prove this inequality for $s = \frac{8}{3}$. Moreover, after replacing $\Gamma(\frac{p}{2})$ on the right hand side with $0.88$ (see Lemma \ref{lm:min-gamma}) and $\Gamma(\frac{p}{2}+2)$ with $\Gamma(3) = 2$, it suffices to prove that the function
\[
f(p) = 0.88(8/3)^2(D(p)-1) + \frac{32-(p/2+2)(p/2+3)}{36} - b\frac{\left(16/3\right)^{p/2}}{4-p},
\]
where $b = 2a^{-8/3}(8/3)^2$, is positive for $2 < p < 3$. We put
\[
L(p) = b\frac{\left(16/3\right)^{p/2}}{4-p} + \frac{1}{36}(p/2+2)(p/2+3)
\]
and 
\[
R(p) = 0.88(8/3)^2(D(p)-1) + \frac{8}{9}
\]
which are both convex ($D(p)$ is even log-convex, Lemma \ref{lm:D-log-convex}). For $2 < p < \frac52$, we use the tangent $\ell_1(p) = R(2) + R'(2)(p-2)$ as a lower bound, $R(p) > \ell_1(p)$ and check that at $p = 2$, $p = \frac52$ the linear function $\ell_1$ dominates $L$ (the difference is $0.017..$ and $0.076..$, respectively), which then gives $R > \ell_1 > L$ on $(2,\frac52)$. Similarly, for $\frac52 < p < 3$, $R(p) > \ell_2(p) = R(5/2)+R'(5/2)(p-5/2)$, and $\ell_2 - L$ at $p = \frac52$ and $p = 3$ is $1.19..$ and $3.77..$, respectively. This finishes the proof.
\end{proof}

\begin{lemma}\label{lm:interpol-t}
For all $2 < p < 3$, we have
\[
F(p,8/3) < e^{-p/6}\tilde G(p,2).
\]
\end{lemma}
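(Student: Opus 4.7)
The plan mirrors that of Lemma~\ref{lm:interp>2}, adapted from $(0,2]$ to $(2,3)$, exploiting the extra factor $D(p)\geq 1$ that distinguishes $\tilde G(p,2)$ from $G(p,2)$ (recall $\tilde G(p,2) = 2^{p-1}\Gamma(p/2)D(p) = G(p,2)D(p)$). I would split $F(p,8/3)=\int_0^5+\int_5^\infty$. For the tail, invoking \eqref{eq:j1-w2} with $t_0=5$ gives an explicit bound of the form $c\cdot 5^p/(4-p)$ (cf.\ \eqref{eq:83-1}), which is finite on $(2,3)$ since $4-p>1$. For the bulk, partition $[0,5]$ into $5m$ intervals of length $1/m$ (say $m=100$, as in the proof of Lemma~\ref{lm:interp>2}) and, using the monotonicity of $|\jj_1|$ on $[0,5]$, dominate each sub-integral by $\max\{|\jj_1(k/m)|,|\jj_1((k+1)/m)|\}^{8/3}$ times $\max\{(k/m)^{p-1},((k+1)/m)^{p-1}\}/m$, exactly as in \eqref{eq:83-2}. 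Summed with the tail estimate, this yields an upper bound
\[
L(p) \;=\; c\cdot\frac{5^p}{4-p} + \sum_k \lambda_k a_k^p
\]
with explicit positive constants $c,\lambda_k,a_k$.

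Next, I would record that $L$ is log-convex on $(2,3)$, since each $\lambda_k a_k^p$ is log-linear and $p\mapsto 1/(4-p)$ is log-convex there (a direct second-derivative check), and hence convex. On the other side,
\[
R(p) \;:=\; e^{-p/6}\tilde G(p,2) \;=\; e^{-p/6}\cdot 2^{p-1}\Gamma(p/2)D(p)
\]
is likewise convex: $\log\Gamma(p/2)$ and $\log D(p)$ are convex (the latter by Lemma~\ref{lm:D-log-convex}) and the remaining factor is log-linear, so $\log R$ is convex, and thus $R$ is log-convex and in particular convex.

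To finish, I would subdivide $(2,3)$ into finitely many intervals $(u_i,u_{i+1})$, replace $R$ on each by its tangent line $\ell_i(p)=R(v_i)+R'(v_i)(p-v_i)$ at the midpoint $v_i=(u_i+u_{i+1})/2$, and numerically verify the two inequalities $\ell_i(u_i)>L(u_i)$ and $\ell_i(u_{i+1})>L(u_{i+1})$. Since $\ell_i$ is linear and $L$ is convex, $\ell_i-L$ is concave, so its positivity at the endpoints yields $L<\ell_i\leq R$ on the whole subinterval -- this is the same tangent-line/chord device deployed in the proofs of Lemmas~\ref{lm:interp>2} and \ref{lm:interp<2}, and documented in Tables \ref{tab:ell-L}--\ref{tab:ell-L2}.

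\textbf{Main obstacle.} The delicate region is the left endpoint $p=2^+$: as $p\downarrow 2$ one has $D(p)\to 1$, so the slack over the analogous statement with $G$ in place of $\tilde G$ shrinks to zero, and by continuity the inequality degenerates at $p=2$ to the endpoint case of Lemma~\ref{lm:interp>2}, which already required a fine numerical verification. Consequently, a comparably fine subdivision and a large $m$ are needed near $p=2$. At the opposite end, $p\to 3^-$, the pole of $\Gamma(3-p)$ makes $D(p)$, hence $R(p)$, blow up, so that $R$ comfortably dominates the bounded $L$ and a coarse grid is sufficient there.
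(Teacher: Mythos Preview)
Your proposal is correct and follows essentially the paper's approach: the same Riemann-sum-plus-tail bound \eqref{eq:83-1}--\eqref{eq:83-2} on $F(p,8/3)$, the same convexity observations (via Lemma~\ref{lm:D-log-convex}), and the same tangent-line device. The paper streamlines the execution by working with $\log F(p,8/3)$ and $\log\bigl(e^{-p/6}\tilde G(p,2)\bigr)$ rather than the functions themselves, and---contrary to your concern about fine subdivision near $p=2^+$---only two subintervals $(2,2.5)$ and $(2.5,3)$ with tangents at their left endpoints already suffice.
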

\begin{proof}
Consider 
\[
L(p) = \log F(p,8/3), \qquad R(p) = \log \left(e^{-p/6}\tilde G(p,2)\right)
\]
which are both convex (recall Lemma \ref{lm:D-log-convex}). Using that, we crudely bound $R(p)$ from below by tangents: $r_1(p) = R(2) + R'(2)(p-2)$ on $(2, 2.5)$ and $r_2(p) = R\left(2.5\right) + R'\left(2.5\right)\left(p-2.5\right)$ on $(2.5, 3)$ and then compare their values at the end points with upper bounds on $L$ to conclude that $r_1 > L$ on $(2, 2.5)$ and $r_2 > L$ on $(2.5, 3)$. Estimates \eqref{eq:83-1} and \eqref{eq:83-2} added together (applied with $m=100$ as in Lemma \ref{lm:interp>2}) yield
\[
L(2) < 0.35, \quad L(2.5) < 0.56, \quad L(3) < 0.96,
\]
whereas we check directly that
\[
r_1(2) > 0.359, \quad r_1(2.5) > 0.58, \quad r_2(3) > 1.48.
\]
Comparing these values finish the argument.
\end{proof}

\begin{proof}[Proof of Lemma \ref{lm:Ht}]
{\red Lemma \ref{lm:j-U} combined with Lemma \ref{lm:UGt} show that $\tilde{H}(p,s)>0$ for all $2<p<3$ and $s\gr 8/3$. To cover the regime $2\ls s< 8/3$, we first apply H\"{o}lder's inequality in the exact same way as in the proof of Lemma \ref{lm:H} (a),
\[
F(p,s)\ls F(p,2)^{\frac{8-3s}{2}}F(p,8/3)^{\frac{3s-6}{2}} 
\]
and now, with $F(p,2) = \tilde G(p,2)$ and Lemma \ref{lm:interpol-t}, we get that
\[
F(p,s) \ls e^{-p\frac{s-2}{4}}\tilde{G}(p,2).
\]
Finally, using \eqref{eq:exp-ps}, the right hand side gets upper bounded by the desired $\tilde G(p,s)$.}
\end{proof}

\subsection{Miscellaneous facts}

Our first result here is a straightforward extension of Lemma 8 from \cite{KK} to negative moments (see also Lemma 3 in \cite{CKT}). 

\begin{lemma}\label{lm:vec-to-1dim}
Let $0 < p < 1$. Let $n, d \geq 1$ and let $X_1, \dots, X_n$ be independent rotationally invariant random vectors in $\R^d$. Then
\[
\E\left|\sum_{k=1}^n {\red |v_k|}X_k\right|^{-p} = \beta_{p,d}\E\left|\sum_{k=1}^n \scal{v_k}{X_k}\right|^{-p} 
\]
for arbitrary vectors $v_1, \dots, v_n$ in $\R^d$, where
\[
\beta_{p,d} = \frac{\sqrt{\pi}\Gamma\left(\frac{d-p}{2}\right)}{\Gamma\left(\frac{1-p}{2}\right)\Gamma\left(\frac{d}{2}\right)}.
\]
\end{lemma}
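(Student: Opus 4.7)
The plan is to apply the Fourier representation of negative moments (Lemma~\ref{lm:formula-mom}) to both sides, reduce everything to the same integral over $(0,\infty)$ involving the Bessel--type function $\jj_{d/2-1}$, and match the normalisation constants.

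First, note that each $|v_k|X_k$ is a rotationally invariant random vector in $\R^d$ with $||v_k|X_k|=|v_k|\cdot|X_k|$. Applying Corollary~\ref{cor:neg-mom-rot} gives
\[
\E\left|\sum_{k=1}^n |v_k|X_k\right|^{-p} = \kappa_{p,d}\int_0^\infty \prod_{k=1}^n \E\,\jj_{d/2-1}\bigl(t|v_k||X_k|\bigr)\,t^{p-1}\,\dd t.
\]
Next, by rotational invariance of each $X_k$ we may write $X_k \stackrel{d}{=} |X_k|\xi_k$ with $\xi_k$ uniform on $S^{d-1}$ independent of $|X_k|$. For a fixed scalar $s$ and fixed $v_k$, setting $v=sv_k$ in~\eqref{eq:jj} and conditioning on $|X_k|$ yields
\[
\E e^{is\scal{v_k}{X_k}} = \E e^{i\scal{s|X_k|v_k}{\xi_k}} = \E\,\jj_{d/2-1}\bigl(|s||v_k||X_k|\bigr),
\]
so by independence the characteristic function of $Y=\sum_k \scal{v_k}{X_k}$ is
\[
\E e^{isY} = \prod_{k=1}^n \E\,\jj_{d/2-1}\bigl(|s||v_k||X_k|\bigr),
\]
an even function of $s$. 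Applying Lemma~\ref{lm:formula-mom} with $d=1$ (valid since $0<p<1$) and folding the integral to $(0,\infty)$ gives
\[
\E|Y|^{-p} = 2K_{p,1}\int_0^\infty \prod_{k=1}^n \E\,\jj_{d/2-1}\bigl(s|v_k||X_k|\bigr)\,s^{p-1}\,\dd s.
\]

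The two displays have identical integrands, so it only remains to compute
\[
\frac{\kappa_{p,d}}{2K_{p,1}} = \frac{2^{1-p}\,\Gamma(\frac{d-p}{2})/[\Gamma(\frac{d}{2})\Gamma(\frac{p}{2})]}{2^{1-p}\,\pi^{-1/2}\,\Gamma(\frac{1-p}{2})/\Gamma(\frac{p}{2})} = \frac{\sqrt{\pi}\,\Gamma(\frac{d-p}{2})}{\Gamma(\frac{1-p}{2})\Gamma(\frac{d}{2})} = \beta_{p,d},
\]
which is exactly the desired constant.

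The only technical point is justifying that the Fourier integrals converge absolutely so that Lemma~\ref{lm:formula-mom} and Corollary~\ref{cor:neg-mom-rot} genuinely apply. This will be the main (though minor) obstacle: near zero the factor $s^{p-1}$ is integrable since $p>0$ and the Bessel products are bounded, while at infinity the standard decay $|\jj_{d/2-1}(u)|=O(u^{-(d-1)/2})$ together with the restriction $p<1$ (and $d\geq 1$) produces enough polynomial decay in the product of the $n$ factors (one typically first handles the trivial case where some $v_k=0$ separately, and otherwise uses $n\geq 1$ together with $p<1$ to secure integrability at infinity). Once this existence check is in place, the calculation above proves the lemma.
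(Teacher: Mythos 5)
Your proposal is exactly the Fourier-analytic justification the paper itself gives: apply Corollary~\ref{cor:neg-mom-rot} to the vector sum, recognise the same Bessel product $\prod_k \E\,\jj_{d/2-1}(t|v_k||X_k|)$ as the characteristic function of $\sum_k\scal{v_k}{X_k}$, apply Lemma~\ref{lm:formula-mom} with $d=1$, and identify $\beta_{p,d}=\kappa_{p,d}/(2K_{p,1})$ -- and your constant computation is correct. The paper additionally records a preliminary reduction (homogeneity and rotational invariance reduce everything to $n=1$, $v_1=e_1$) and an alternative embedding argument based on $\E|\scal{x}{\xi}|^{-p}=\beta_{p,d}^{-1}|x|^{-p}$, which avoids the convergence caveat you flag (your absolute-decay claim is not airtight for small $d$ or few nonzero coefficients, where the Fourier integrals converge only conditionally), but in substance your proof coincides with the paper's.
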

\begin{proof}
Thanks to homogeneity, we can assume that the $v_k$ are \emph{unit}. Thanks to rotational invariance and independence, we can assume without loss of generality that $v_1 = \dots = v_n = e_1$, but then it suffices to consider the case $n=1$ (because sums of independent rotationally invariant random vectors are rotationally invariant). The latter can be easily justified in a number of ways. 

For instance, it follows from a Fourier-analytic argument: we invoke \eqref{eq:neg-mom-rot}, rewrite $\E\ \jj_{d/2-1}(t|X_k|)$ as $\E e^{it\scal{v_k}{X_k}}$ and apply \eqref{eq:formula-mom} with $d=1$ to $\sum \scal{v_k}{X_k}$ which gives $\beta_{p,d} = \kappa_{p,d}/(2K_{p,1})$. 

Alternatively, we can apply a standard embedding-type argument: if we take a random vector $\xi$ uniform on the unit Euclidean sphere $S^{d-1}$, independent of the $X_k$, we have for every vector $x$ in $\R^d$
\[
\E|\scal{x}{\xi}|^{-p} = \beta_{p,d}^{-1}|x|^{-p}
\]
with
\[
\beta_{p,d}^{-1} = \E|\scal{e_1}{\xi}|^{-p} = \frac{\int_{-1}^1 |t|^{-p}(1-t^2)^{\frac{d-3}{2}} \dd t}{\int_{-1}^1 (1-t^2)^{\frac{d-3}{2}} \dd t} = \frac{\Gamma\left(\frac{1-p}{2}\right)\Gamma\left(\frac{d}{2}\right)}{\sqrt{\pi}\Gamma\left(\frac{d-p}{2}\right)}.
\]
Applying this to $x = X_1$, taking the expectation over $X_1$ and noting that $\scal{X_1}{\xi}$ has the same distribution as $\scal{X_1}{e_1}$ finishes the argument.
\end{proof}

\begin{lemma}\label{lm:Gamma-small-coeff}
For every $0 < q < 2$, we have
\[
 \left(\frac{13}{20}\right)^{q} < \Gamma(2-q).
\]
\end{lemma}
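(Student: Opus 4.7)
\emph{Proof proposal.}
The plan is to take logarithms and work with the equivalent statement that
\[
f(q) := \log \Gamma(2-q) + q \log(20/13) > 0 \qquad \text{for } 0 < q < 2.
\]
Since $\log \Gamma(2) = 0$, we have $f(0) = 0$, so the task reduces to showing that $f$ is strictly positive on the open interval $(0,2)$.

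The main observation is that $f$ is convex on $(-\infty, 2)$: this follows from the well-known convexity of $\log \Gamma$ on $(0, \infty)$ (Bohr--Mollerup), pulled back by the linear map $q \mapsto 2-q$, plus the addition of a linear term. Combined with $f(0) = 0$, the tangent-line inequality at the origin then yields
\[
f(q) \;\geq\; f(0) + f'(0)\,q \;=\; f'(0)\,q \qquad \text{for all } 0 \leq q < 2,
\]
so it is enough to verify that $f'(0) > 0$. Computing, $f'(q) = -\psi(2-q) + \log(20/13)$, where $\psi = (\log \Gamma)'$ is the digamma function; using the identity $\psi(2) = 1 - \gamma$ (with $\gamma$ denoting the Euler--Mascheroni constant), we obtain
\[
f'(0) \;=\; \log(20/13) - (1-\gamma).
\]

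The only real obstacle is this final numerical check, and it is in fact a rather tight one: using $\gamma < 0.5773$ (so $1-\gamma < 0.4228$) and a careful estimate such as $\log(20/13) > 0.4307$ (for instance by verifying $e^{0.4307} < 20/13 = 1.53846\ldots$ via the partial sum of the exponential series), we conclude $f'(0) > 0.007 > 0$. This narrow margin presumably explains the somewhat peculiar choice of the constant $13/20$, which is inherited from its role in Corollary~\ref{cor:all-small-p-small}.
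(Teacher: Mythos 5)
Your proposal is correct and essentially identical to the paper's proof: the paper likewise takes logarithms, uses convexity of $q \mapsto \log\Gamma(2-q) - q\log\frac{13}{20}$ on $(0,2)$, and reduces everything to the single numerical check $f'(0) = \gamma - 1 - \log\frac{13}{20} > 0.007$, with $f(0)=0$. One tiny wording slip: to conclude $1-\gamma < 0.4228$ you need the lower bound $\gamma > 0.5772$, not the upper bound $\gamma < 0.5773$ that you quoted (the needed fact is of course true, so nothing breaks).
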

\begin{proof}
The function $f(q) = \log\Gamma(2-q) - q\log \frac{13}{20}$ is convex  on $(0,2)$ with $f'(0) = \gamma - 1 -\log\frac{13}{20} > 0.007$. Thus $f$ is strictly increasing and the lemma follows since $f(0) = 0$.
\end{proof}

\section{End of the proof of Theorem \ref{thm:OPneg-p<2}}\label{sec:end-proof-p<2}

To finish the proof of Theorem \ref{thm:OPneg-p<2}, we only need to justify Steps 1-4 from Section \ref{sec:beginning-proof-p<2}.

\subsection{Step 1 and 3: Integral inequality}
Lemma \ref{lm:H} (a) and (b) gives Step 1 and 3, respectively.

\subsection{Step 2: Induction}\label{sec:ind}
First note that, by homogeneity, \eqref{eq:OPneg} with $C(p) = C_\infty(p)$ is equivalent to
\[
\E\left|\xi_1 + \sum_{k=2}^n a_k\xi_k\right|^{-p} \leq C_\infty(p)\left(1 + \sum_{k=2}^n a_k^2\right)^{-p/2}.
\]
For $p > 0$ and $x \geq 0$ we define
\[
\phi_p(x)=(1+x)^{-p/2}
\]
and
\[
\Phi_p(x)=\begin{cases} \phi_p(x), & x\geq 1,\\
                        2\phi_p(1)-\phi_p(2-x), & 0\leq x\leq 1.
\end{cases}
\]
Geometrically, on $[0,1]$, the graph of $\Phi_p(x)$ is obtained from the graph of $\phi_p(x)$ on $[1,2]$ by reflecting it about $(1, \phi_p(1))$. Crucially, $\Phi_p(x) \leq \phi_p(x)$ for all $x \geq 0$, {\red since $2\phi_p(1)\ls \phi_p(x)+\phi_p(2-x)$, by the convexity of $\phi_p$}. By induction on $n$, we will show a strengthened version of the above with $\phi_p$ on the right hand side replaced by $\Phi_p$.

\begin{theorem}\label{thm:ind}
Let $\frac14 \leq p \leq 2$. Let $\xi_1, \xi_2, \dots$ be independent random vectors uniform on the unit Euclidean sphere $S^{3}$ in $\R^4$.  For every $n \geq 2$ and nonnegative numbers $a_2, \dots, a_n$, we have
\begin{equation}\label{eq:ind}
\E\left|\xi_1+\sum_{k=2}^n a_k\xi_k\right|^{-p} \leq C_\infty(p)\Phi_p\left(\sum_{k=2}^n a_k^2\right).
\end{equation}
\end{theorem}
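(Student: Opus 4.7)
The plan is to induct on $n$. The base case $n=2$ follows directly from the explicit two-coefficient estimates already in hand, while the inductive step separates off $a_n \xi_n$, uses rotational invariance to reduce to a scalar problem on $|W|$, and then appeals to an auxiliary scalar inequality for which the reflection structure of $\Phi_p$ is designed.

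\textbf{Base case} $n = 2$: By rotational invariance of $\xi_1$, $\E|\xi_1 + a_2\xi_2|^{-p} = \E|e_1 + a_2\xi|^{-p}$, with $\xi$ uniform on $S^3$. For $a_2 \in [0,1]$ and $t = a_2^2$, Corollary \ref{cor:two-coeff-R4} gives the upper bound $1 - \tfrac{p(2-p)}{8}t - \tfrac{p^2(4-p^2)}{192}t^2$, and Lemma \ref{lm:ind-base} with $q = p/2$ (valid since $\tfrac14 \leq p \leq 2$) shows this polynomial is at most $C_\infty(p)\Phi_p(t)$; one checks directly, using $C_\infty(p) = 2^{p/2}\Gamma(2 - p/2)$, that the right-hand side of Lemma \ref{lm:ind-base} equals $C_\infty(p)\Phi_p(t)$. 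For $a_2 > 1$, the symmetry $\E|\xi_1 + a_2\xi_2|^{-p} = a_2^{-p}\E|\xi_1 + (1/a_2)\xi_2|^{-p}$ reduces to the case $1/a_2 < 1$; the claim then follows from $\Phi_p \leq \phi_p$ on $[0,1]$ together with $\Phi_p = \phi_p$ on $[1,\infty)$.

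\textbf{Inductive step}: Assume the theorem for $n-1$ and consider coefficients $a_2,\dots,a_n \geq 0$; WLOG $a_n$ is the smallest. Set $W = \xi_1 + \sum_{k=2}^{n-1} a_k \xi_k$ and $A_{n-1} = \sum_{k=2}^{n-1} a_k^2$. Since $\xi_n$ is rotationally invariant and independent of $W$, conditioning on $W$ and rotating gives
\[
\E\bigl[|W + a_n \xi_n|^{-p} \bigm| W\bigr] = \E\bigl|\,|W| e_1 + a_n \xi\bigr|^{-p},
\]
with $\xi$ uniform on $S^3$. The base case, applied after rescaling by $|W|$, yields the pointwise bound
\[
\E\bigl|\,|W| e_1 + a_n \xi\bigr|^{-p} \leq C_\infty(p)\,|W|^{-p}\,\Phi_p\!\left(a_n^2/|W|^2\right).
\]
Combined with the inductive hypothesis $\E|W|^{-p} \leq C_\infty(p)\,\Phi_p(A_{n-1})$, the theorem reduces to the auxiliary scalar inequality
\[
\E\bigl[\,|W|^{-p}\,\Phi_p(a_n^2/|W|^2)\,\bigr] \leq \Phi_p(A_{n-1} + a_n^2).
\]

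\textbf{Main obstacle}: The auxiliary inequality is the crux. Because $\Phi_p$ is concave on $[0,1]$ but convex on $[1,\infty)$, no straightforward Jensen-type argument applies. The strategy is to bound the integrand $r \mapsto r^{-p}\Phi_p(a_n^2/r^2)$ pointwise by a suitable affine combination $\alpha(a_n)\,r^{-p} + \beta(a_n)$, distinguishing the regimes $r \geq a_n$ and $r < a_n$ (using that the integrand is continuous across $r = a_n$), so that the inductive bound on $\E|W|^{-p}$ yields the claimed inequality. Verifying the resulting scalar inequality hinges on the reflection identity $\Phi_p(x) + \Phi_p(2 - x) = 2\Phi_p(1)$ on $[0, 2]$, which pinpoints $\Phi_p$ as the \emph{right} strengthening of $\phi_p$; the smallness hypothesis $a_n \leq a_k$ for $k \geq 2$ confines $a_n^2/|W|^2$ to a favorable range, and the restriction $\tfrac14 \leq p \leq 2$ is precisely what makes Lemma \ref{lm:ind-base} and these scalar estimates close up.
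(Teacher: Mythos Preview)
Your base case is correct and matches the paper. The inductive step, however, has a genuine gap: the ``auxiliary scalar inequality'' $\E\bigl[|W|^{-p}\Phi_p(a_n^2/|W|^2)\bigr]\le\Phi_p(A_{n-1}+a_n^2)$ is the entire content of the step, and the sketch does not prove it. The difficulty is structural. By applying the $n=2$ bound conditionally on $W$ you have already spent a factor $C_\infty(p)$, and the only leverage induction gives on $|W|$ is the single moment estimate $\E|W|^{-p}\le C_\infty(p)\Phi_p(A_{n-1})$. A pointwise bound $r^{-p}\Phi_p(a_n^2/r^2)\le\alpha r^{-p}+\beta$ followed by that moment estimate yields at best $\alpha\,C_\infty(p)\Phi_p(A_{n-1})+\beta$. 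But since $\Phi_p$ is decreasing, one always has $r^{-p}\Phi_p(a_n^2/r^2)\le\Phi_p(0)\,r^{-p}$, and for small $a_n$ this is essentially sharp; you are then forced toward $\alpha\approx\Phi_p(0)$, $\beta\approx0$, and the resulting requirement $\Phi_p(0)C_\infty(p)\le1$ fails (e.g.\ at $p=2$ it reads $\tfrac{2}{3}\cdot2=\tfrac{4}{3}\le1$). A single negative moment of $|W|$ is simply not enough information to control the functional $\E\bigl[|W|^{-p}\Phi_p(a_n^2/|W|^2)\bigr]$, so peeling off one term against the partial sum cannot close the induction.

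The paper's inductive step is organised differently and never conditions on a partial sum. With $x=\sum_{k\ge2}a_k^2$ and (after rescaling) all $a_k\le1$: when $x\ge1$ induction is not used at all---the Fourier-analytic Corollary~\ref{cor:all-small-p<2} from Step~1 handles this case directly, and this ingredient is entirely absent from your outline. When $x<1$, the paper merges the \emph{last two} terms $a_n\xi_n$, $a_{n+1}\xi_{n+1}$ (leaving $\xi_1$ untouched) via a random orthogonal matrix $Q$, using the identity
\[
\E\Bigl|\xi_1+\sum_{k=2}^{n+1}a_k\xi_k\Bigr|^{-p}
=\E_Q\E_\xi\Bigl|\xi_1+\sum_{k=2}^{n-1}a_k\xi_k+\bigl|v_n+Q^\top v_{n+1}\bigr|\,\xi_n\Bigr|^{-p}
\]
(derived from Lemma~\ref{lm:vec-to-1dim} for $0<p<1$ and extended by analytic continuation). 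The inductive hypothesis is then applied \emph{once} inside $\E_Q$, with no extra $C_\infty(p)$ factor, and the proof closes via the midpoint inequality $\tfrac12\bigl(\Phi_p(a_-)+\Phi_p(a_+)\bigr)\le\Phi_p\bigl(\tfrac{a_-+a_+}{2}\bigr)$ for $\tfrac{a_-+a_+}{2}\le1$. That inequality---not a pointwise affine bound---is what the reflection construction of $\Phi_p$ is actually designed for.
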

\begin{proof}
For the inductive base, when $n=2$, \eqref{eq:ind} becomes
\[
\E|\xi_1 + \sqrt{t}\xi_2|^{-p} \leq 2^{p/2}\Gamma\left(2- \frac{p}{2}\right)\Phi_p(t), \qquad t \geq 0,
\]
where we have put $t = a_2^2$. By homogeneity and the fact that $\Phi_p \leq \phi_p$, the case $t \geq 1$ reduces to the case $0 \leq t \leq 1$. {\red Indeed, if $t \geq 1$, $\Phi_p(t) = \phi_p(t) = (1+t)^{-p/2}$, so dividing both sides by $t^{-p/2}$, the inequality is equivalent to the one with $1/t$ instead of $t$ and $\phi_p(1/t)$ on the right hand side}. The case $0 \leq t \leq 1$ follows by combining Corollary \ref{cor:two-coeff-R4} and Lemma \ref{lm:ind-base} (applied to $q = p/2$, noting as usual that by rotational invariance, $\E|e_1 + \sqrt{t}\xi_2|^{-p} = \E|\xi_1+\sqrt{t}\xi_2|^{-p}$).

For the inductive step, let $n \geq 2$ and suppose \eqref{eq:ind} holds for all $n-1$ nonnegative numbers $a_2, \dots, a_n$. To prove it for $n$ nonnegative arbitrary numbers, say $a_2, \dots, a_n, a_{n+1}$, we let 
\[
x = a_2^2+\dots+a_n^2+a_{n+1}^2
\]
and consider 3 cases.

\emph{Case 1: $a_k > 1$ for some $2 \leq k \leq n+1$.} Then $x > 1$, so $\Phi_p(x) = \phi_p(x)$ and our goal is to show
\begin{equation}\label{eq:ind1}
\E\left|\sum_{k=1}^{n+1} a_k\xi_k\right|^{-p} \leq C_\infty(p)\left(\sum_{k=1}^{n+1} a_k^2\right)^{-p/2}
\end{equation}
where we put $a_1 = 1$. Let $a_1^*, \dots, a_{n+1}^*$ be a nonincreasing rearrangement of the sequence $a_1, \dots, a_{n+1}$ and set $a_k' = \frac{a_k^*}{a_1^*}$, $k = 1, \dots, n+1$. Thanks to homogeneity, to prove \eqref{eq:ind1}, it is enough to prove
\[
\E\left|\sum_{k=1}^{n+1} a_k'\xi_k\right|^{-p} \leq C_\infty(p)\Phi_p\left(\sum_{k=2}^{n+1} a_k'^2\right)
\]
which is handled by either of the next two cases because here $a_1'=1$ and $a_k' \leq 1$ for all $k \geq 2$.

\emph{Case 2.1: $a_k \leq 1$ for all $2 \leq k \leq n+1$ and $x \geq 1$.} Since $x \geq 1$, our goal is again \eqref{eq:ind1} with $a_1 = 1$. We have,
\[
\max_{k \leq n+1} a_k = 1 \leq \frac{1}{\sqrt{2}}\sqrt{1+x} = \frac{1}{\sqrt{2}}\left(\sum_{k=1}^{n+1} a_k^2\right)^{1/2},
\]
so Corollary \ref{cor:all-small-p<2} finishes the inductive argument in this case.

\emph{Case 2.2: $a_k \leq 1$ for all $2 \leq k \leq n+1$ and $x < 1$.} 
Fix vectors $v_2, \dots, v_{n+1}$ in $\R^4$ with $|v_k| = a_k$, for each $k = 2, \dots, n+1$. Then, plainly,
\[
\E\left|\xi_1+\sum_{k=2}^{n+1}a_k\xi_k\right|^{-p} =\E\left||e_1|\xi_1+\sum_{k=2}^{n+1}|v_k|\xi_k\right|^{-p}
\]
and thanks to Lemma \ref{lm:vec-to-1dim}, when $0 < p < 1$, the right hand side can be written as
\[
\E\left||e_1|\xi_1+\sum_{k=2}^{n+1}|v_k|\xi_k\right|^{-p} = \beta_{p,4}\E\left|\scal{e_1}{\xi_1} +\sum_{k=2}^{n+1}\scal{v_k}{\xi_k}\right|^{-p}.
\]
If we let $Q$ be a random orthogonal matrix, independent of the $\xi_k$ and note that $(\xi_n, \xi_{n+1})$ has the same distribution as $(\xi_n, Q\xi_n)$, we obtain
\[
\E\left|\scal{e_1}{\xi_1} +\sum_{k=2}^{n+1}\scal{v_k}{\xi_k}\right|^{-p} = \E_Q\E_\xi\left|\scal{e_1}{\xi_1} +\sum_{k=2}^{n-1}\scal{v_k}{\xi_k} + \scal{v_n+Q^\top v_{n+1}}{\xi_n}\right|^{-p}\hspace*{-0.7em}.
\]
Going back to the vector sum again via Lemma \ref{lm:vec-to-1dim}, we arrive at the identity
\[
\E\left|\xi_1+\sum_{k=2}^{n+1}a_k\xi_k\right|^{-p} = \E_Q\E_\xi\left|\xi_1+\sum_{k=2}^{n-1}|v_k|\xi_k + |v_n+Q^\top v_{n+1}|\xi_n \right|^{-p}.
\] 
The same identity continues to hold for all $0 < p < 3$: we know it holds for all $0 < p < 1$ and both sides are clearly analytic in $p$ wherever the expectations exists, so in $\{p \in \C, \ \textrm{Re}(p) <3\}$, {\red because $|\E|\cdot|^{z}| \leq \E|\cdot|^{\textrm{Re}(z)}$ for $z \in \C$}  (the analyticity follows, e.g. from Morera's theorem by a standard argument).  Conditioned on the value of $Q$, the inductive hypothesis applied to the $n-1$ nonnegative numbers $|v_2|, \dots, |v_{n-1}|, |v_n+Q^\top v_{n+1}|$ yields
\[
\E\left|\xi_1+\sum_{k=2}^{n+1}a_k\xi_k\right|^{-p} \leq \E_Q C_\infty(p)\Phi_p\left(|v_2|^2 + \dots + |v_{n-1}|^2 + |v_n+Q^\top v_{n+1}|^2\right).
\]
Note that
\[
|v_2|^2 + \dots + |v_{n-1}|^2 + |v_n \pm Q^\top v_{n+1}|^2 = x \pm 2\scal{v_n}{Q^\top v_{n+1}},
\]
so thanks to the symmetry of the distribution of $Q$, we can rewrite the right hand side as
\[
C_\infty(p)\E_Q \frac{\Phi_p(x+2\scal{v_n}{Q^\top v_{n+1}}) + \Phi_p(x-2\scal{v_n}{Q^\top v_{n+1}}) }{2}.
\]
The proof of the inductive step now follows from the following \emph{extended} concavity property of $\Phi_p$ applied to $a_\pm = x \pm 2\scal{v_n}{Q^\top v_{n+1}}$.
\end{proof}

\begin{lemma}
Let $p > 0$. For every $a_-, a_+ \geq 0$ with $\frac{a_- + a_+}{2} \leq 1$, we have
\[
\frac{\Phi_p(a_-) + \Phi_p(a_+)}{2} \leq \Phi_p\left(\frac{a_- + a_+}{2}\right).
\]
\end{lemma}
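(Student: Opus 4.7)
The plan is to reformulate the inequality via the odd function
\[
g(t):=\Phi_p(1+t)-\Phi_p(1), \qquad t\in[-1,1].
\]
First I would observe that $a_\pm \geq 0$ together with $\frac{a_-+a_+}{2}\leq 1$ forces $a_\pm \in [0,2]$: assuming without loss of generality $a_-\leq a_+$, we have $a_+ \leq 2m \leq 2$, and in particular $a_-\leq 1$. Hence only the values of $\Phi_p$ on $[0,2]$ enter. The piecewise definition of $\Phi_p$ on $[0,1]$ is equivalent to the reflection identity
\[
\Phi_p(x)+\Phi_p(2-x)=2\phi_p(1), \qquad x\in[0,2],
\]
which immediately implies that $g$ is odd on $[-1,1]$. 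Since $\phi_p(x)=(1+x)^{-p/2}$ is convex on $(-1,\infty)$, so is $g(t)=(2+t)^{-p/2}-2^{-p/2}$ on $[0,1]$; by oddness $g$ is then concave on $[-1,0]$.

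With $s:=a_--1$ and $t:=a_+-1$ in $[-1,1]$, the desired inequality becomes $g(s)+g(t)\leq 2g\bigl(\tfrac{s+t}{2}\bigr)$ subject to $s+t\leq 0$, and I would split into two subcases. If $s,t\leq 0$, this is just midpoint concavity of $g$ on $[-1,0]$ and is immediate. Otherwise $s\leq 0\leq t$ and $-s\geq t$; setting $\alpha:=-s\in[0,1]$, $\beta:=t\in[0,1]$ and $\gamma:=\tfrac{\alpha-\beta}{2}\in[0,1]$, using oddness of $g$ twice turns the inequality into
\[
g(\beta+2\gamma)\geq g(\beta)+2g(\gamma).
\]
I would deduce this from the standard ``superadditivity'' of convex functions vanishing at the origin: $g$ convex on $[0,1]$ with $g(0)=0$ implies $g(x)/x$ is non-decreasing on $(0,1]$, hence $g(x+y)\geq g(x)+g(y)$ for all $x,y\geq 0$ with $x+y\leq 1$ (note $\beta+2\gamma=\alpha\leq 1$, so all arguments stay in the domain). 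Applying this first with $(x,y)=(\beta,2\gamma)$ and then with $(x,y)=(\gamma,\gamma)$ chains to $g(\beta+2\gamma)\geq g(\beta)+g(2\gamma)\geq g(\beta)+2g(\gamma)$, closing subcase two.

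No serious obstacle is anticipated: the argument is driven entirely by the convexity of $\phi_p$ and the tailor-made reflection symmetry of $\Phi_p$. The only creative step is identifying the reformulation via the odd function $g$, which makes visible that $\Phi_p$ is obtained by ``odd reflection'' of $\phi_p$ about $(1,\phi_p(1))$, so that midpoint concavity on pairs with midpoint $\leq 1$ translates into a statement about an odd function that is concave on one half of its domain and convex on the other---after which the two subcases collapse to elementary convexity estimates.
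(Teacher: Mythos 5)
Your argument is correct, and every step checks out: the reflection identity $\Phi_p(x)+\Phi_p(2-x)=2\phi_p(1)$ on $[0,2]$ does follow from the definition, $g(t)=\Phi_p(1+t)-\Phi_p(1)$ is odd, convex on $[0,1]$ and concave on $[-1,0]$, the constraint keeps all arguments in $[0,2]$, and the superadditivity of a convex function vanishing at $0$ legitimately closes the mixed case (note $\beta+2\gamma=\alpha\leq 1$ and $2\gamma\leq\alpha\leq 1$, so all points stay in the domain). For comparison: the paper does not prove this lemma at all here; it simply cites Lemma 20 of \cite{CKT}, remarking only that that proof uses nothing beyond the convexity of $\phi_p$ (which is why the restriction to $0<p<1$ there is immaterial). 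Your write-up is thus a self-contained substitute in the same spirit --- everything is still driven by convexity of $\phi_p$ --- but the packaging is different: you make the ``odd reflection about $(1,\phi_p(1))$'' structure explicit, which collapses the statement to midpoint concavity on one half-interval plus superadditivity on the other, whereas the cited proof proceeds by a direct case analysis with chord/convexity estimates for $\phi_p$. The gain of your route is that the case distinction becomes transparent and the only inequality ever invoked is the elementary monotonicity of $g(x)/x$.
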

\begin{proof}
This is Lemma 20 in \cite{CKT} (stated there for \emph{no} reason only for $0 < p < 1$, as the proof works for every $p > 0$ because it only uses the convexity of $\phi_p$).
\end{proof}

\subsection{Step 4: Projection}
Let us say that $a_1 = \max_{k \leq n} |a_k|$, so $a_1 > \sqrt{\frac{10}{13}}$. \emph{Projecting} onto this coefficient, that is applying Corollary \ref{cor:two-coeff-min} to $a = a_1$ and $v = \sum_{k=2}^n a_k\xi_k$ (conditioning on its value), we get
\[
\E\left|\sum_{k=1}^{n}a_k\xi_k\right|^{-p} \leq a_1^{-p} \leq \left(\frac{13}{10}\right)^{p/2} \leq 2^{p/2}\Gamma\left(2-\frac{p}{2}\right) = C_\infty(p),
\]
where the last inequality results from Lemma \ref{lm:Gamma-small-coeff} (applied to $q = p/2$). This finishes the proof of Theorem \ref{thm:OPneg-p<2}.\hfill$\square$

\section{End of the proof of Theorem \ref{thm:OPneg-p>2}}\label{sec:end-proof-p>2}

To finish the proof of Theorem \ref{thm:OPneg-p>2}, we only need to show here Steps 1 and 2 from Section \ref{sec:beginning-proof-p>2}.

\subsection{Step 1: Integral inequality}
Lemma \ref{lm:Ht} gives the desired claim.

\subsection{Step 2: Induction}
We repeat the entire inductive argument from Section \ref{sec:ind} verbatim, replacing $\frac14 \leq p \leq 2$ with $2 < p  < 3$ and $C_\infty(p)$ with $C_2(p)$. The only modification required  is to check the inductive base which now amounts to verifying that
\[
\E|\xi_1 + \sqrt{t}\xi_2|^{-p} \leq C_2(p)\Phi_p(t) = C_2(p)(2^{1-p/2}-(3-t)^{-p/2}), \qquad 0 \leq t \leq 1.
\]
By Lemma \ref{lm:two-coeff-expansion}, the left hand side is clearly increasing in $t$ (when $2 < p < 3$ and $d=4$ all the coefficients in the power series expansion therein are positive), whereas the right hand side is clearly decreasing in $t$. By the definition of $C_2(p)$, there is equality at $t=1$. This finishes the  whole proof.

\appendix

\section*{Appendix: Behaviour of the constants}\label{app:pht}

We sketch an argument of the following proposition which justifies \eqref{eq:phase.transition}.
\begin{proposition}
For every $d\gr 1$, the equation $c_{d,2}(q)=c_{d,\infty}(q)$ has a unique solution $q = q_d^\ast$ in $(-(d-1),2)$. Moreover, $c_{d,2}(q) < c_{d,\infty}(q)$ for $-(d-1) < q < q_d^*$ and $c_{d,2}(q) > c_{d,\infty}(q)$ for $q_d^* < q < 2$. For $d \geq 5$, we have $q_d^* \in (-(d-1), -(d-2))$.
\end{proposition}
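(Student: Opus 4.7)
Introduce the function $F(q) := q\log\bigl(c_{d,2}(q)/c_{d,\infty}(q)\bigr)$, which by \eqref{eq:const-2} and \eqref{eq:const-inf} takes the explicit form
\[
F(q) = \tfrac{q}{2}\log\tfrac{d}{4} + 2\log\Gamma\!\left(\tfrac{d}{2}\right) + \log\Gamma(d+q-1) - 2\log\Gamma\!\left(\tfrac{d+q}{2}\right) - \log\Gamma\!\left(d+\tfrac{q}{2}-1\right).
\]
For $q\neq 0$, the equation $c_{d,2}(q)=c_{d,\infty}(q)$ is equivalent to $F(q)=0$, and the sign of $\log(c_{d,2}(q)/c_{d,\infty}(q))$ matches that of $F(q)/q$. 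The first task is to record the boundary data: $\Gamma(x+1)=x\Gamma(x)$ together with a brief simplification gives $F(2)=0$; for $d\geq 2$ the gamma terms also collapse at $q=0$ to yield $F(0)=0$, while for $d=1$ a short expansion in $q$ yields $F(0^+)=-\log 2<0$. Finally, $F(q)\to +\infty$ as $q\to -(d-1)^+$, since $\log\Gamma(d+q-1)$ diverges while the remaining terms remain bounded.

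For the uniqueness of $q_d^*$ (and the global sign pattern), the plan is to show that $F$ has at most three zeros on $[-(d-1),2]$ when $d\geq 2$, and at most two when $d=1$. Since $F(0)=F(2)=0$ (respectively only $F(2)=0$ for $d=1$) accounts for all but at most one zero, the remaining zero is then unique and equal to $q_d^*$, with its existence provided by the intermediate value theorem on a subinterval where the boundary data force $F$ to change sign. Concretely, one computes
\[
F''(q) = \psi'(d+q-1) - \tfrac{1}{2}\psi'\!\left(\tfrac{d+q}{2}\right) - \tfrac{1}{4}\psi'\!\left(d+\tfrac{q}{2}-1\right),
\]
expands each trigamma as $\psi'(x)=\sum_{n\geq 0}(x+n)^{-2}$, and, via a careful pairing of the resulting series, shows that $F''$ changes sign at most once on $(-(d-1),2)$; then Rolle's theorem caps the zero count of $F$ at three. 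This pointwise sign control of $F''$ is the main technical hurdle: while the qualitative shape of $F$ is transparent from its boundary behavior and its zeros at $0$ and $2$, ruling out spurious oscillations in the polygamma combination requires a delicate comparison, likely with a $d$-dependent splitting of the interval. Once uniqueness is in place, the sign assertion of the proposition is read off by inspecting $F(q)/q$ on each subinterval delimited by the zeros of $F$.

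For the localization when $d\geq 5$, substitute $q=-(d-2)$ into $F$: since $d+q-1=1$, $(d+q)/2=1$, and $d+q/2-1=d/2$, many terms simplify and
\[
F(-(d-2)) \;=\; \log\Gamma\!\left(\tfrac{d}{2}\right) \;-\; \tfrac{d-2}{2}\log\tfrac{d}{4}.
\]
A Stirling-type estimate rewrites the right-hand side as $\tfrac{d}{2}(\log 2-1)+O(\log d)$, which is strictly negative for all $d\geq 5$ (a direct check for $d=5,6,7$ combined with a monotonicity argument in $d$ handles any transitional cases). Combined with $F(-(d-1)^+)=+\infty$ and the uniqueness of $q_d^*$, this forces $q_d^*\in(-(d-1),-(d-2))$, as claimed.
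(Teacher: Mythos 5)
Your function $F(q)=q\log\bigl(c_{d,2}(q)/c_{d,\infty}(q)\bigr)$ is exactly the paper's $h_d$, and your boundary data ($F(0)=F(2)=0$, $F\to+\infty$ at $-(d-1)^+$, $F(0^+)=-\log 2$ for $d=1$) as well as the evaluation $F(-(d-2))=\log\Gamma(d/2)-\frac{d-2}{2}\log\frac d4<0$ for $d\geq 5$ (this is the paper's Claim~3 after the substitution $x=\frac{q+d-1}{2}$) are correct. The gap is at the core of the argument: the claim that $F''(q)=\psi'(d+q-1)-\frac12\psi'\bigl(\frac{d+q}{2}\bigr)-\frac14\psi'\bigl(d+\frac q2-1\bigr)$ changes sign at most once on all of $(-(d-1),2)$, uniformly in $d$, is never proved --- you defer it to ``a careful pairing'' and ``a delicate comparison'', but this is precisely the whole difficulty, not a routine verification. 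Pairing the first two trigamma series gives, with $x=d+q-1$, $F''=\sum_{n\geq0}\bigl[(x+2n)^{-2}-(x+2n+1)^{-2}\bigr]-\frac14\psi'\bigl(\frac{x+d-1}{2}\bigr)$, a difference of two positive \emph{decreasing} functions, so no monotonicity argument yields a single crossing; in fact $F''$ is not monotone (it decreases from $+\infty$ at the left endpoint, while $F'''(2)>0$ for large $d$, so it increases again near $q=2$), so the single-crossing statement needs genuinely quantitative, $d$-uniform estimates that you have not supplied. The paper avoids this entirely: it treats only $d\geq5$ and $q<0$ (citing the known cases $d\leq4$ and the Schur-concavity results of \cite{BC,KK} for $0<q<2$), changes variables, applies the duplication formula, and then needs only convexity of $\tilde h_d$ on the short piece $x\in(0,1)$ (a uniform bound $\geq 5-\pi^2/2$) plus $\tilde h_d'>0$ on $(1,\frac{d-1}{2})$ via digamma inequalities --- much weaker statements than your global control of $F''$.

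Even granting ``at most three zeros'', the deduction is incomplete. For $2\leq d\leq 4$ the boundary data $F\to+\infty$, $F(0)=F(2)=0$ do \emph{not} force a sign change: they are consistent with $F\geq0$ throughout, in which case the equation $c_{d,2}=c_{d,\infty}$ would have its unique root at $q=0$ (the zero of $F$ at $q=0$ is an artifact of the factor $q$; whether $c_{d,2}(0)=c_{d,\infty}(0)$ is governed by $F'(0)$, not by $F(0)$). So for $d=2,3,4$ you need additional interior sign evaluations of $F$, or you must import the known results for those dimensions as the paper does; similarly, for $d\geq5$ you never determine the sign of $F$ on $(0,2)$ (the paper gets it from \cite{BC,KK}). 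Finally, for $d=1$ the asserted bound ``at most two zeros'' does not follow from the single-sign-change claim for $F''$ alone, and existence of a root in $(0,2)$ requires showing $F>0$ somewhere in $(0,2)$, which you do not verify. The localization step for $d\geq5$ is fine modulo these points and coincides with the paper's computation.
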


\begin{proof}
Since the cases $1\ls d\ls 4$ have been explicitly dealt with (see the discussion in the introduction), it is enough to analyse the case $d\gr 5$. Moreover, by the Schur-concavity result of \cite{BC} and \cite{KK}, $c_{d,\infty}(q) < c_{d,2}(q)$ for every $0 < q < 2$, so we can further assume that $-(d-1) < q < 0$. We look into the sign of
\[
h_d(q) = \log(c_{d,2}(q)^q)-\log(c_{d,\infty}(q)^q).
\]
{\red Note that for $q<0$ the sign of $h_d(q)$ is opposite to the sign of $c_{d,2}(q)-c_{d,\infty}(q)$. Now, $h_d(q)$} can be equivalently recast as
\begin{align*}
h_d(q) &= \log\left(2^{-\frac{q}{2}}\frac{\Gamma\left(\frac{d}{2}\right)\Gamma(d+q-1)}{\Gamma\left(\frac{d+q}{2}\right)\Gamma\left(d+\frac{q}{2}-1\right)}\right)-\log\left(\left(\frac{2}{d}\right)^{q/2}\frac{\Gamma\left(\frac{d+q}{2}\right)}{\Gamma\left(\frac{d}{2}\right)}\right)\\
       &= -q\log 2+\frac{q}{2}\log d + \log\left(\frac{\Gamma\left(\frac{d}{2}\right)^2\Gamma(d+q-1)}{\Gamma\left(\frac{d+q}{2}\right)^2\Gamma\left(d+\frac{q}{2}-1\right)}\right).
\end{align*}
Writing $x=\frac{q+d-1}{2}\in\left(0,\frac{d-1}{2}\right)$ and $\tilde{h}_d(x)=h_d(2x+1-d)$, we get (using the Legendre duplication formula $\Gamma(2x)\sqrt{\pi}=2^{2x-1}\Gamma(x)\Gamma(x+1/2)$) that
\[
\tilde{h}_d(x) = x\log d + \log\left(\frac{\Gamma(x)}{\Gamma\left(x+\frac{1}{2}\right)\Gamma\left(x+\frac{d-1}{2}\right)}\right)+\log\left(\frac{2^{d-2}\Gamma\left(\frac{d}{2}\right)^2}{\sqrt{\pi}d^{\frac{d-1}{2}}}\right).
\]
We now make the following claims.

\noindent\textbf{Claim 1.} For all $0 < x < 1$, $\tilde{h}_d''(x) > 0.06$.

\noindent\textbf{Claim 2.} For every $d \geq 5$, $\inf_{1< x < \frac{d-1}{2}} \tilde{h}_d'(x) > 0$.

\noindent\textbf{Claim 3.} $\tilde{h}_d(\frac12) < 0$.

The strict convexity from Claim 1, the simple observation that $\tilde{h}_d(0+) = +\infty$ and Claim 3 give that that $\tilde{h}_d$ has a unique zero, say $x_0$ in $(0,\frac12)$, is positive on $(0,x_0)$ and negative on $(x_0, \frac12)$. Claim 2 and the simple observation that $\tilde{h}_d(\frac{d-1}{2}) = 0$ gives that $\tilde{h}_d$ is negative on $\left[1,\frac{d-1}{2}\right)$. Convexity also gives that $\tilde{h}_d$ is negative on $(\frac12, 1)$, for $h_d(\frac12)$ and $h_d(1)$ are negative. These give the desired behaviour of $c_{d,2}(q) - c_{d,\infty}(q)$ for $-(d-1) < q < 0$. Finally, it also follows from Claim 2 that $h_d'(0) > 0$ which gives $c_{d,2}(0) - c_{d,\infty}(0) > 0$. It remains to prove the claims. 
\end{proof}

\begin{proof}[Proof of Claim 1]
Differentiating twice yields
\[
\tilde{h}_d''(x) = \sum_{n=0}^\infty\left(\frac{1}{(x+n)^2}-\frac{1}{\left(x+n+\frac{1}{2}\right)^2}-\frac{1}{\left(x+n+\frac{d-1}{2}\right)^2}\right).
\]
Note that the first two terms make up a decreasing function, thus for $0 < x < 1$ and $d \geq 5$ the right hand side is greater than
\[
\sum_{n=0}^\infty\left(\frac{1}{(1+n)^2}-\frac{1}{\left(1+n+\frac{1}{2}\right)^2}-\frac{1}{\left(n+2\right)^2}\right) = 5-\frac{\pi^2}{2}>0,
\]
which proves the claim.
\end{proof}

\begin{proof}[Proof of Claim 2]
Differentiating once yields
\[
\tilde{h}_d'(x) = \log d + \left(\psi(x) - \psi\left(x+\frac12\right)\right) - \psi\left(x + \frac{d-1}{2}\right)
\]
where $\psi = (\log \Gamma)'$ as usual denotes the digamma function. By the well-known inequality $\psi(u) \leq \log u -\frac{1}{2u}$, $u > 0$ (see, e.g. 6.3.21 in \cite{AS}), we obtain
\[
\tilde{h}_d'(x) \geq \log d - \log\left(x + \frac{d-1}{2}\right) + \frac{1}{2x+d-1} - \left(\psi\left(x+\frac12\right)-\psi(x)\right).
\]
Put $y = \frac{d-1}{2}$ and call the right hand side $F(x,y)$. Note that for every fixed $x > 1$,
\[
\frac{\partial F}{\partial y}(x,y) = \frac{1}{y+1/2} - \frac{1}{x+y} - \frac{1}{2}\frac{1}{(x+y)^2} > \frac{1}{y+1/2} - \frac{1}{1+y} - \frac{1}{2}\frac{1}{(1+y)^2}
\]
which is clearly positive for all $y > 0$. Therefore, for all $1 < x < y$,
\[
\tilde{h}_d'(x) \geq F(x,y) > F(x,x).
\]
It remains to prove that $f(x) = F(x,x) > 0$ for every $x > 1$. We have,
\[
f(x) = \left(\log\left(1+\frac{1}{2x}\right) + \frac{1}{4x}\right) - \left(\psi\left(x+\frac12\right)-\psi(x)\right).
\]
Note that each bracket is a decreasing function in $x$ (for the second one, e.g. by taking the derivative). Thus, crudely, for $1 < x < 1.07$,
\[
f(x) > \left(\log\left(1+\frac{1}{2\cdot 1.07}\right) + \frac{1}{4\cdot 1.07}\right) -  \left({\red \psi\left(1+\frac12\right)-\psi(1)}\right) > 0.003.
\]
For $x \geq 1.07$, using again $\psi(x+1/2) \leq \log (x+1/2) - \frac{1}{2x+1}$ as well as $\psi(x) \geq \log(x+1/2) - \frac{1}{x}$ (see \cite{EGP}), we get
\[
f(x) \geq \log\left(1+\frac{1}{2x}\right) + \frac{1}{4x} - \left(\frac{1}{x}-\frac{1}{2x+1}\right)
\]
It is elementary to verify that the right hand side is positive for $x \geq 1.07$ (it is in fact unimodal, e.g. by analysing its derivative).
\end{proof}

\begin{proof}[Proof of Claim 3]
We have, $\tilde{h}_d\left(\frac{1}{2}\right) = \log\left(2^{d-2}d^{1-d/2}\Gamma\left(\frac{d}{2}\right)\right)$. Letting $u = d/2 \geq 5/2$ and using {\red 
\begin{equation}\label{eq:Stirling}
\Gamma(u)\ls \sqrt{2\pi}u^{u-\frac{1}{2}}e^{-u+\frac{1}{12u}}, \qquad u > 0,
\end{equation} 
 (see \cite{J}),} we get 
\[
\tilde{h}_d\left(\frac{1}{2}\right)  < \log\left(\sqrt{2\pi}2^{u-1}e^{-u+\frac{1}{12u}}u^{1/2}\right) \leq  \log\left(\sqrt{2\pi}2^{u-1}e^{-u+\frac{1}{30}}u^{1/2}\right).
\]
Denoting the right hand side by $f(u)$, we see that $f$ is strictly concave. Since $f'(5/2) < -0.1$, $f$ is decreasing for $u \geq 5/2$. Thus $f(5/2) < -0.04$ finishes the argument.
\end{proof}

\begin{remark}\label{rem:q*-d-large}
We have,
\begin{equation}\label{eq:q*-d-large}
q_d^* = -(d-1) + O(d)\text{exp}\left(-\frac{1-\log 2}{2}d\right), \qquad d \to \infty.
\end{equation}
As before, by Claim 1, to show $q_d^* < -(d-1)+2\alpha_d$ for some $\alpha_d > 0$, it suffices to check that $\tilde{h}_d(\alpha_d) < 0$. {\red We have,
\begin{align*}
\tilde h_d(\alpha_d) &= \alpha_d\log d + \log\frac{\Gamma(\alpha_d)}{\Gamma(\alpha_d+\frac12)\Gamma(\alpha_d + \frac{d-1}{2})} + \log \frac{2^{d-2}\Gamma(\frac{d}{2})^2}{\sqrt{\pi}d^{\frac{d-1}{2}}} \\
&= \alpha_d\log d + \log\frac{\Gamma(\alpha_d)\Gamma(\frac{d}{2})^2}{\Gamma(\alpha_d+\frac12)\Gamma(\alpha_d + \frac{d-1}{2})d^{\frac{d-1}{2}}} + d\log 2 - \log(4\sqrt{\pi}).
\end{align*}
Note that $\Gamma(x) < \frac{1}{x}$, for $0 < x < 1$ (since $\Gamma(1+x) < 1$).
We consider $\alpha_d = Cde^{-cd}$ for positive constants $c, C$ chosen soon. For $d$ large enough, $\alpha_d < \frac{1}{2}$, so $\Gamma(\alpha_d+\frac12) > 1$. Moreover, $\Gamma(\alpha_d) < \frac{1}{\alpha_d}$, $\Gamma(\alpha_d+\frac{d-1}{2}) \geq \Gamma(\frac{d-1}{2}) = \frac{2}{d-1}\Gamma(\frac{d+1}{2}) > \frac{2}{d}\Gamma(\frac{d}{2})$, as well as $\alpha_d\log d = o(1)$, thus
\begin{align*}
\tilde h_d(\alpha_d) &\leq O(1) + \log\frac{d\cdot \Gamma(\frac{d}{2})}{\alpha_dd^{\frac{d-1}{2}}} + d\log 2  \\
&= O(1) + \log\frac{\Gamma(\frac{d}{2})}{Ce^{-cd}d^{\frac{d-1}{2}}} + d\log 2.
\end{align*}
Applying \eqref{eq:Stirling} to $\Gamma(\frac{d}{2})$, we obtain
\begin{align*}
\tilde h_d(\alpha_d) &\leq O(1) - \log C + d\left(c+\frac{1}{2}\log 2-\frac{1}{2}\right).
\end{align*}
Choosing $c = \frac{1-\log 2}{2}$ and $C$ large enough to offset $O(1)$, the right hand side becomes negative.}
\end{remark}


\begin{thebibliography}{9}



\bibitem{AS}
Abramowitz, M., Stegun, I. A., Handbook of mathematical functions with formulas, graphs, and mathematical tables. U. S. Government Printing Office, Washington, D.C., 1964.


\bibitem{ALM}
Astashkin, S. V., Lykov, K. V., Milman, M., Majorization revisited: Comparison of norms in interpolation scales, preprint (2021), arXiv:2107.11854.



\bibitem{AH}
Averkamp, R., Houdr\'e, C., Wavelet thresholding for non-necessarily Gaussian noise: Idealism. \emph{Ann. Statist.} 31 (2003), 110--151. 




\bibitem{BC}
Baernstein, A., II, Culverhouse, R., Majorization of sequences, sharp vector Khinchin inequalities, and bisubharmonic functions. \emph{Studia Math.} 152 (2002), no. 3, 231--248.



\bibitem{Ball}
Ball, K.,
Cube slicing in $R^n$.
\emph{Proc. Amer. Math. Soc.} 97 (1986), no. 3, 465--473.




\bibitem{BN}
Barthe, F., Naor, A.,
Hyperplane projections of the unit ball of $\ell_p^n$. 
\emph{Discrete Comput. Geom.} 27 (2002), no. 2, 215--226. 


\bibitem{Bh}
Bhatia, R.,
Matrix analysis. 
Graduate Texts in Mathematics, 169. \emph{Springer-Verlag, New York}, 1997.


\bibitem{Brz}
Brzezinski, P.,
Volume estimates for sections of certain convex bodies.
\emph{Math. Nachr.} 286 (2013), no. 17-18, 1726--1743.


\bibitem{CGT}
Chasapis, G., Gurushankar, K., Tkocz, T.,
Sharp bounds on $p$-norms for sums of independent uniform random variables, $0<p<1$, preprint (2021), arXiv:2105.14079, to appear in \emph{J. Anal. Math.}



\bibitem{CKT}
Chasapis, G., K\"onig, H., Tkocz, T., From Ball's cube slicing inequality to Khinchin-type inequalities for negative moments. \emph{J. Funct. Anal.} 281 (2021), no. 9, Paper No. 109185, 23 pp.


\bibitem{CNT}
Chasapis, G., Nayar, P., Tkocz, T.,  Slicing $\ell_p$-balls reloaded: stability, planar sections in $\ell_1$, preprint (2021), arXiv:2109.05645, to appear in \emph{Ann. Probab.}


\bibitem{DC}
Deming, W., Colcord, C., The Minimum in the Gamma Function. \emph{Nature} 135, 917 (1935).

\bibitem{Die}
Diestel, J., Sequences and series in Banach spaces. Graduate Texts in Mathematics, 92. \emph{Springer-Verlag}, New York, 1984.


\bibitem{Dirk-cyl}
Dirksen, H., Hyperplane sections of cylinders. \emph{Colloq. Math.} 147 (2017), no. 1, 145--164.


\bibitem{ENT1}
Eskenazis, A., Nayar, P., Tkocz, T.,
Gaussian mixtures: entropy and geometric inequalities, \emph{Ann. of Prob.} 46(5) 2018, 2908--2945.


\bibitem{ENT2}
Eskenazis, A., Nayar, P., Tkocz, T.,
Sharp comparison of moments and the log-concave moment problem.
\emph{Adv. Math.} 334 (2018), 389--416.

\bibitem{EGP}
Elezovic, N., Giordano, C., Pe\v{c}ari\'c, J., The best bounds in Gautschi’s inequality, \emph{Math. Inequal. Appl.} 3 (2000), 239--252.



\bibitem{GF}
Gorin, A., Favorov, Yu.,
Generalizations of the Khinchin inequality. (Russian)
\emph{Teor. Veroyatnost. i Primenen.} 35 (1990), no. 4, 762--767; translation in
\emph{Theory Probab. Appl.} 35 (1990), no. 4, 766--771 (1991).



\bibitem{Haa}
Haagerup, U.,
The best constants in the Khintchine inequality.
\emph{Studia Math.} 70 (1981), no. 3, 231--283.



\bibitem{HT}
Havrilla, A., Tkocz, T., Sharp Khinchin-type inequalities for symmetric discrete uniform random variables. \emph{Israel J. Math.} 246 (2021), no. 1, 281--297.


\bibitem{HNT}
Havrilla, A., Nayar, P., Tkocz, T., 
Khinchin-type inequalities via Hadamard's factorisation, preprint (2021), arXiv:2102.09500, to appear in \emph{Int. Math. Res. Not. IMRN}



\bibitem{Ver2}
Hyt\"onen, T., van Neerven, J., Veraar, M., Weis, L., Analysis in Banach spaces. Vol. II. Martingales and Littlewood-Paley theory. Series of Modern Surveys in Mathematics  63. Springer, Cham, 2016.

\bibitem{J}
Jameson, G. J. O., A simple proof of Stirling's formula for the gamma function. Math. Gaz. 99 (2015), no. 544, 68--74.

\bibitem{KalKol}
Kalton, N. J., Koldobsky, A.,
Intersection bodies and $L_p$-spaces.
\emph{Adv. Math.} 196 (2005), no. 2, 257--275.


\bibitem{Kh}
Khintchine, A.,
\"Uber dyadische Br\"uche.
\emph{Math. Z.} 18 (1923), no. 1, 109--116.



\bibitem{Ko}
K\"onig, H.,
On the best constants in the Khintchine inequality for Steinhaus variables.
\emph{Israel J. Math.} 203 (2014), no. 1, 23--57.


\bibitem{Ko-priv}
K\"onig, H., Personal communication, 2021.


\bibitem{KKol}
K\"onig, H., Koldobsky, A.,
Volumes of low-dimensional slabs and sections in the cube.
\emph{Adv. in Appl. Math.} 47 (2011), no. 4, 894--907.


\bibitem{KK}
K\"onig, H., Kwapie\'n, S.,
Best Khintchine type inequalities for sums of independent, rotationally invariant random vectors.
\emph{Positivity} 5 (2001), no. 2, 115--152.


\bibitem{KRud}
K\"onig, H., Rudelson, M.,
On the volume of non-central sections of a cube.
\emph{Adv. Math.} 360 (2020), 106929, 30 pp.




\bibitem{KLO}
Kwapie\'n, S., Lata\l a, R., Oleszkiewicz, K.,
Comparison of moments of sums of independent random variables and differential inequalities.
\emph{J. Funct. Anal.} 136 (1996), no. 1, 258--268.



\bibitem{LO-best}
Lata\l a, R., Oleszkiewicz, K.,
On the best constant in the Khinchin-Kahane inequality.
\emph{Studia Math.} 109 (1994), no. 1, 101--104.




\bibitem{LO}
Lata\l a, R., Oleszkiewicz, K.,
A note on sums of independent uniformly distributed random variables.
\emph{Colloq. Math.} 68 (1995), no. 2, 197--206.



\bibitem{LT}
Ledoux, M., Talagrand, M., Probability in Banach spaces. Isoperimetry and processes. \emph{Springer-Verlag}, Berlin, 1991.


\bibitem{Lit}
Littlewood, J. E., On a certain bilinear form, \emph{Quart. J. Math.} Oxford Ser. 1 (1930), 164--174.


\bibitem{MelRob}
Melbourne, J., Roberto, C., Transport-majorization to analytic and geometric inequalities, preprint (2021), 
arXiv:2110.03641.


\bibitem{Mo}
Mordhorst, O.,
The optimal constants in Khintchine's inequality for the case $2<p<3$.
\emph{Colloq. Math.} 147 (2017), no. 2, 203-–216. 



\bibitem{NO}
Nayar, P., Oleszkiewicz, K.,
Khinchine type inequalities with optimal constants via ultra log-concavity.
\emph{Positivity} 16 (2012), no. 2, 359--371. 



\bibitem{NP}
Nazarov, F., Podkorytov, A.,
Ball, Haagerup, and distribution functions. Complex analysis, operators, and related topics, 247--267,
\emph{Oper. Theory Adv. Appl.}, 113, Birkh\"auser, Basel, 2000.



\bibitem{New}
Newman, C. M.,
An extension of Khintchine's inequality.
\emph{Bull. Amer. Math. Soc.} 81 (1975), no. 5, 913--915.


\bibitem{koles}
Oleszkiewicz, K.,
Comparison of moments via Poincar\'e-type inequality.
\emph{Advances in stochastic inequalities (Atlanta, GA, 1997)}, 135--148, 
Contemp. Math., 234, \emph{Amer. Math. Soc., Providence, RI}, 1999. 


\bibitem{koles-b}
Oleszkiewicz, K.,
Precise moment and tail bounds for Rademacher sums in terms of weak parameters. 
\emph{Israel J. Math.} 203 (2014), no. 1, 429--443. 


\bibitem{OP}
Oleszkiewicz, K., Pe\l czy\'nski, A.,
Polydisc slicing in $C^n$.
\emph{Studia Math.} 142 (2000), no. 3, 281--294.

\bibitem{PS}
Pass, B., Spektor, S.,
On Khintchine type inequalities for k-wise independent Rademacher random variables.
\emph{Statist. Probab. Lett.} 132 (2018), 35--39.


\bibitem{Saw}
Sawa, J.,
The best constant in the Khintchine inequality for complex Steinhaus variables, the case $p=1$. \emph{Studia Math.} 81 (1985), no. 1, 107--126.


\bibitem{Sz}
Szarek, S.,
On the best constant in the Khintchine inequality.
\emph{Stud. Math.} 58, 197--208 (1976).


\bibitem{Tom}
Tomaszewski, B.,
A simple and elementary proof of the Kchintchine inequality with the best constant. \emph{Bull. Sci. Math.} (2) 111 (1987), no. 1, 103--109.





\bibitem{W}
Watson, G. N., A Treatise on the Theory of Bessel Functions. \emph{Cambridge University Press, Cambridge, England; The Macmillan Company, New York}, 1944.


\bibitem{Whi}
Whittle, P.,
Bounds for the moments of linear and quadratic forms in independent random variables. \emph{Theory Probab. Appl.} 5, 302--305 (1960)

\bibitem{Woj}
Wojtaszczyk, P. Banach spaces for analysts. Cambridge Studies in Advanced Mathematics, 25. \emph{Cambridge University Press, Cambridge}, 1991

\bibitem{Y}
Young, R. M. G., On the best possible constants in the Khintchine inequality. \emph{J. London Math. Soc.} (2) 14 (1976), no. 3, 496--504.


\end{thebibliography}
\end{document}